\title{Malliavin-Stein method for the multivariate compound Hawkes process}
\author{
	  Mahmoud Khabou\footnote{INSA de Toulouse, IMT UMR CNRS 5219, Universit\'e de Toulouse, 135 avenue de Rangueil 31077 Toulouse Cedex 4 France. \; Email: \texttt{mahmoud.khabou@insa-toulouse.fr}} 
}
\DeclareMathOperator{\Hessian}{Hess}
\newtheorem{remark}{Remark}
\newtheorem{theorem}{Theorem}[section]
\newtheorem{definition}{Definition}[section]
\newtheorem{corollary}{Corollary}[theorem]
\newtheorem{lemma}[theorem]{Lemma}
\newtheorem{prop}{Proposition}[section] 
\newtheorem{defprop}{Definition and Proposition}[section] 
\newcommand{\eps}{\varepsilon}
\def\E{\mathbb{E}}
\def\P{\mathbb{P}}
\def\real{\mathbb{R}}
\def\F{\mathcal{F}}
\def\1{\textbf{1}}
\def\R{\mathbb R}
\def\N{\mathbb N}
\def\E{\mathbb E}
\def\F{{\cal F}}
\def\d{{\mathrm d}}
\def\P{\mathbb P}
\newtheorem{assumption}{Assumption}
\DeclareMathOperator{\diag}{diag}  
\begin{document}
	
	\maketitle
	\begin{abstract}
		\noindent
    In this paper, we provide upper bounds on the $d_2$ distance between a large class of functionals of a multivariate compound Hawkes process and a given Gaussian vector. This is proven using Malliavin's calculus defined on an underlying Poisson embedding. The upper bound is then used to infer the speed of convergence of Central Limit Theorems  for the multivariate compound Hawkes process with exponential kernels as the observation time $T$ goes to infinity.
	\end{abstract}
	\section{Introduction}
	Hawkes processes have been used to model events that exhibit self-exciting properties. Initially introduced in 1971 \cite{hawkes1971spectra} to model seismic activities, Hawkes processes became popular in other fields like credit risk \cite{errais2010affine} or micro-structure in finance \cite{bacry2013modelling}. In recent years, the need to model systems with interacting components has been growing, that is how the multivariate Hawkes process emerged. In addition to its self-exciting properties, the multivariate Hawkes process can model mutually-exciting phenomena and it has been used in a wide array of fields such that neuro-science \cite{locherbach2017large}, \cite{reynaudbouret:hal-00866823}, social networks \cite{achab2017uncovering}, cyber-security \cite{bessyroland:hal-02546343} and financial econometrics \cite{embrechts_liniger_lin_2011}.\\
	
	In this paper, we consider the multivariate compound process (or total loss process) $(\boldsymbol{ L}_t)_{t\geq 0}$ defined  component-wise at a time $t$ as 
		\begin{equation}
\left\lbrace
\begin{array}{l}
L^1_t = \sum_{k=1}^{H^1_t} Y^1_k,\quad (Y^1_k)_{k=1,\cdots} \text { are } i.i.d ,\\\\
\vdots \\\\
L^d_t = \sum_{k=1}^{H^d_t} Y^d_k,\quad (Y^d_k)_{k=1,\cdots} \text { are } i.i.d ,
\end{array}
\right.
\end{equation}
	where $(Y^j_k)_{k\in \mathbb N^*, j=1,\cdots,d}$ are  \textit{i.i.d} random variables and $\boldsymbol{ H}_t=\left (H^1_t,\cdots,H^d_t \right )$ is a family of $d$ point processes each having an intensity $(\lambda_t^1,\cdots,\lambda_t^d)$ such that 
	$$\lambda_t^i \d t= \E \left [ H_{t+\d t}^i-H_{t}^i| \mathcal F_{t^-}\right ],\quad i=1,\cdots,d.$$
	The process $\boldsymbol{ L}$ is called a multivariate compound Hawkes process if its intensity $\boldsymbol{ \lambda}$ follows the dynamics
	
		\begin{equation}
\left\lbrace
\begin{array}{l}
\lambda^1_t = \mu^1+\int_{[0,t)} \sum_{k=1}^d \Phi_{1k}(t-s)\d L_s^k,\\\\
\vdots \\\\
\lambda^d_t = \mu^d+\int_{[0,t)} \sum_{k=1}^d \Phi_{dk}(t-s)\d L_s^k,
\end{array}
\right.
\end{equation}
where $\boldsymbol{ \mu}=(\mu^1,\cdots,\mu^d)\in \R_+^d$ plays the role of a baseline intensity and $\Phi = \left ( \Phi_{ik} \right )_{i,k=1,\cdots,d} $ is a family of non-negative integrable kernels on $\R_+$.\\
This dynamics can also be expressed under the following matrix form 

$$\boldsymbol{ \lambda}_t=\boldsymbol{ \mu}+ \int_{[0,t)}\Phi(t-s) \d \boldsymbol{ L}_s.$$
	In some applications, the question about the Hawkes process' longtime behaviour naturally arises. In the special case $Y^j_k \equiv 1$, and under the condition that the spectral radius of the matrix $S:= \|\Phi\|_1$ is strictly less than one ($\rho(S)<1$), Bacry \textit{et al.} \cite{bacry2013some}
	proved the following martingale Central Limit Theorem \textit{(CLT)} for $(\boldsymbol{ L}_t)_{t\geq 0}=(\boldsymbol{ H}_t)_{t\geq 0}$
	$$\left (\frac{\boldsymbol{ H}_{Tv}-\int_0^{Tv} \boldsymbol{ \lambda}_t \d t}{\sqrt T}\right )_{v\in[0,1]}\underset{T \rightarrow +\infty}{\Longrightarrow} \left ( \Sigma^{1/2}  W_v\right )_{v\in [0,1]}$$
	where the convergence takes place in law for the Skorokhod topology and where $\Sigma$ is a diagonal matrix that depends only on $\boldsymbol {\mu}$ and $S$. From this martingale result, they derived the alternative CLT
	$$\left (\frac{\boldsymbol{ H}_T-\int_0^T \E [\boldsymbol{ \lambda}_t] \d t}{\sqrt T}\right )_{v\in [0,1]}\underset{T \rightarrow +\infty}{\Longrightarrow} \left ( \left (I_d -S \right )^{-1}\Sigma^{1/2} W_v \right )_{v\in [0,1]}.$$
	Until very recently, the quantification of the speed of this convergence has not been thoroughly studied. In the univariate $1$-marginal case $(v=1)$, Hillairet \textit{et al.} \cite{hillairet2021malliavinstein} have found a Berry-Ess\'een type bound on the Wassestein distance between the normalized one-dimensional Hawkes process and its Gaussian limit
	$$d_W\left (\frac{H_T-\int_0^T \E[\lambda_t] \d t}{\sqrt T},  \mathcal N(0,\tilde\sigma^2)\right )=O\left (\frac{1}{\sqrt T} \right ),\quad \tilde \sigma^2 =\frac{\mu}{(1-\|\phi\|_1)^3}$$
	in case the kernel is an exponential $\phi(u)=\alpha e^{-\beta u}$ (with $0<\alpha < \beta$) or an Erlang function $\phi(u)=\alpha ue^{-\beta u}$ (with $0<\alpha < \beta^2$). This result has been derived thanks to an approach introduced by Nourdin and Peccati \cite{nourdin:hal-01314406} which combines Malliavin's calculus with Stein's method.\\
	
	In this article we prove a generalization of the quantification result to the multivariate case using a version of Malliavin's calculus that is adapted to higher dimensions. Following the lines of \cite{giovanni2010multi}, we prove an upper bound on the distance between a vector of divergences with respect to the multivariate Poisson process and a given centered multivariate Gaussian in the $d_2$ metric. Unlike the Wasserstein metric which remains relevant in the multivariate normal space \cite{nourdin:hal-01314406},  the $d_2$ metric is more suitable for the multi-dimensional Poisson space.\\
	
%	In this paper, we consider the compound multivariate Hawkes process ( or total loss process) $(\boldsymbol{ L}_t)_{t\geq 0}$ defined component-wise as 
%$$L^j_t=\sum_{k=1}^{H^j_t} Y^j_k,$$

 After defining compound multivariate Hawkes process as the result of the thinning of Poisson measures in section \ref{section:thinning}, we introduce the elements of multivariate Malliavin's calculus on the Hawkes process in sections \ref{section:Malliavin} and \ref{section:Stein}. Finally, in section \ref{section:General}, we prove a general bound on a class of multivariate Hawkes functionals. As a first application in section \ref{section:CLT} we give a bound on the Wasserstein distance between the multivariate and multimarginal normalized martingale and its Gaussian limits. Then we use those results to show that if the kernels take the exponential form
 
 $$\Phi_{ij}(u)=\alpha_{ij}e^{-\beta_iu},$$
 and under the following assumptions 

 \begin{assumption}[Stability 1]\label{as:stab}
 	The spectral radius of the matrix $B^{-1}A\diag (m^1,\cdots,m^d)$ satisfies $$\rho\left ( B^{-1}A\diag (m^1,\cdots,m^d)\right ) < 1,$$
 	with $A=(\alpha_{ij})_{ij},$ $B^{-1}=\diag(\frac{1}{\beta_1},\cdots,\frac{1}{\beta_d})$ and $(m^1,\cdots,m^d)= (\E[Y^1],\cdots,\E[Y^d]).$
 	
 \end{assumption}
 \begin{assumption}[Stability 2]\label{as:vanish}
 	Set $V=B-A\diag (m^1,\cdots,m^d).$
 	The eigenvalues of $V$ are positive.
 \end{assumption}

 	\begin{assumption}[Third moment]\label{as:third} 
	The measures $\nu^1,\cdots,\nu^d$ of $Y^1_1,\cdots,Y^d_1$ have finite third moments.
\end{assumption}
\noindent 
the follwoing result holds:

\begin{theorem}
	\label{th:intro}
	Let $\boldsymbol{ L}_T$ be a multivariate compound Hawkes process.\\
	Set $\boldsymbol{ Y}'_T= \frac{\boldsymbol{ L}_T -\diag (m^1,\cdots,m^d)\left ( B -A\diag (m^1,\cdots,m^d)\right )^{-1}B\boldsymbol{\mu} T}{\sqrt T}$.\\
	Set $\tilde C = \left (J \sqrt C \right )  \left (J \sqrt C \right )^\top =JCJ^\top $, where $J= \left ( I_d-\diag(m^1,\cdots,m^d)B^{-1}A \right )^{-1}$ and   $ C=\diag(\sigma^2_1,\cdots,\sigma^2_d)$ where for any $j=1,\cdots,d$
	$$\sigma^2_j= \int x^2 \nu^j(\mathrm d x) \left[\big(B-A\diag(m^1,\cdots,m^d)\big)^{-1}B\boldsymbol{\mu}\right]^j.$$
	Let $\tilde {\boldsymbol{ G}}\sim \mathcal N \left (0, \tilde C \right )$. Then, under the Assumptions \ref{as:stab}, \ref{as:vanish}, \ref{as:third}, there exists a constant $K$ that does not depend on $T$ such that 
	$$d_2(\boldsymbol{ Y'}_T,\tilde {\boldsymbol{ G}})\leq \frac{K}{\sqrt T},$$
	for any $T>0$.
\end{theorem}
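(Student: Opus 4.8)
The plan is to reduce $\boldsymbol{Y}'_T$ to a linear transform of the fundamental martingale and then push it through the general $d_2$-bound of section \ref{section:General}. Write $M:=\diag(m^1,\dots,m^d)$ and $V:=B-AM$ with $A=(\alpha_{ij})$ and $B=\diag(\beta_1,\dots,\beta_d)$. Because the kernels are exponential, $\boldsymbol{\lambda}$ is Markovian and solves $\d\boldsymbol{\lambda}_t=B(\boldsymbol{\mu}-\boldsymbol{\lambda}_t)\,\d t+A\,\d\boldsymbol{L}_t$; I would substitute $\d\boldsymbol{L}_t=\d\boldsymbol{M}_t+M\boldsymbol{\lambda}_t\,\d t$, where $\boldsymbol{M}_t:=\boldsymbol{L}_t-\int_0^t M\boldsymbol{\lambda}_s\,\d s$ is the compensated (martingale) part, which turns the drift into $-V(\boldsymbol{\lambda}_t-\bar{\boldsymbol{\lambda}})$ with stationary mean $\bar{\boldsymbol{\lambda}}=V^{-1}B\boldsymbol{\mu}$, so that the centering in $\boldsymbol{Y}'_T$ is exactly $M\bar{\boldsymbol{\lambda}}\,T$. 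Integrating $\d\boldsymbol{\lambda}_t=-V(\boldsymbol{\lambda}_t-\bar{\boldsymbol{\lambda}})\,\d t+A\,\d\boldsymbol{M}_t$ over $[0,T]$ yields $\int_0^T(\boldsymbol{\lambda}_t-\bar{\boldsymbol{\lambda}})\,\d t=V^{-1}A\boldsymbol{M}_T-V^{-1}(\boldsymbol{\lambda}_T-\boldsymbol{\lambda}_0)$, whence \[ \boldsymbol{Y}'_T=\tfrac{1}{\sqrt T}\big(I_d+MV^{-1}A\big)\boldsymbol{M}_T-\tfrac{1}{\sqrt T}MV^{-1}(\boldsymbol{\lambda}_T-\boldsymbol{\lambda}_0). \] A short Neumann-series computation shows $I_d+MV^{-1}A=(I_d-MB^{-1}A)^{-1}=J$, so $\boldsymbol{Y}'_T=\tfrac{1}{\sqrt T}J\boldsymbol{M}_T+R_T$ with $\E\|R_T\|=O(1/\sqrt T)$, since $\boldsymbol{\lambda}_T-\boldsymbol{\lambda}_0$ is bounded in $L^1$ uniformly in $T$ by stability.

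Next I would express the leading term as a vector of divergences. In the Poisson embedding of section \ref{section:thinning}, $\boldsymbol{M}_T=\delta(\boldsymbol{u})$ for an explicit integrand $\boldsymbol{u}$ built from the thinning indicators and the marks, so $\tfrac{1}{\sqrt T}J\boldsymbol{M}_T=\delta\big(\tfrac{1}{\sqrt T}J\boldsymbol{u}\big)$ by linearity. Applying the multivariate Stein--Malliavin bound of section \ref{section:General} to this vector of divergences reduces $d_2(\boldsymbol{Y}'_T,\tilde{\boldsymbol{G}})$, up to the $O(1/\sqrt T)$ contribution of $R_T$, to two families of terms: a \emph{second-order} term comparing the target covariance $\tilde C=JCJ^\top$ with the Malliavin covariance $\langle \tfrac{1}{\sqrt T}(J\boldsymbol{u})^i,\,D F_j\rangle$, and a \emph{third-order} term of the form $\E\!\int |D_{(t,y)}F_i|\,|D_{(t,y)}F_j|\,|D_{(t,y)}F_k|$ integrated against the compensator $\lambda^{\cdot}_t\,\d t$ and the mark law $\nu^{\cdot}(\d y)$, where $F_j:=\tfrac{1}{\sqrt T}(J\boldsymbol{M}_T)_j$.

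The technical core is then to control the add-one-point Malliavin derivatives. Inserting a point at time $t$ raises the intensity on $(t,T]$ and triggers a cascade of further events, so $D_{(t,y)}\boldsymbol{\lambda}_s$ obeys the same linear recursion as $\boldsymbol{\lambda}$ and, for exponential kernels, admits an explicit solution governed by the semigroup of $-V$; $D_{(t,y)}\boldsymbol{M}_T$ follows from it together with the direct contribution of the inserted mark $y$. Assumptions \ref{as:stab} and \ref{as:vanish} guarantee that this semigroup decays exponentially uniformly in $T$ (this is also where the multiplier $J$ reappears, from integrating the propagation over all future times), which keeps every derivative bounded in $L^3$ with $T$-independent constants, and Assumption \ref{as:third} makes the inserted mark contribute a finite $\int|y|^3\,\nu^j(\d y)$.

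Finally I would assemble the estimates. The third-order term scales as $T\cdot(1/\sqrt T)^3=T^{-1/2}$: the compensator carries total mass $O(T)$, each of the three derivatives carries the $1/\sqrt T$ normalization, and the mark integrals are finite by Assumption \ref{as:third}. The second-order term is the main obstacle, because one must obtain the correct limit and the correct rate simultaneously. I would write the normalized Malliavin covariance as a time average over $[0,T]$ of a functional of the stabilized intensity: ergodic averaging identifies the limit as $\tilde C=JCJ^\top$ with $C=\diag(\sigma_1^2,\dots,\sigma_d^2)$ and $\sigma_j^2=\int x^2\,\nu^j(\d x)\,\bar\lambda^j$ (the cross terms vanishing because distinct components never jump simultaneously), while a variance computation, relying again on the exponential decorrelation furnished by Assumptions \ref{as:stab}--\ref{as:vanish} to make the integrand's autocovariances summable, yields fluctuations of order $1/\sqrt T$. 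Combining the second-order bound, the third-order bound, and the remainder $R_T$ gives $d_2(\boldsymbol{Y}'_T,\tilde{\boldsymbol{G}})\le K/\sqrt T$ with $K$ independent of $T$.
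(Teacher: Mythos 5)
Your proposal is correct and its architecture matches the paper's: reduce $\boldsymbol{Y}'_T$ to $J$ times the normalized martingale $\boldsymbol{F}_T=\boldsymbol{M}_T/\sqrt T$ plus an $O(1/\sqrt T)$ remainder, then run the Malliavin--Stein machinery of Theorem \ref{th:Anthony} and control the second- and third-order terms via Assumptions \ref{as:stab}, \ref{as:vanish}, \ref{as:third}. Two differences in route are worth recording. First, writing $M:=\diag(m^1,\cdots,m^d)$ and $V:=B-AM$, you obtain the decomposition in one step by integrating the intensity SDE pathwise, $\boldsymbol{Y}'_T=\frac{1}{\sqrt T}\left(I_d+MV^{-1}A\right)\boldsymbol{M}_T-\frac{1}{\sqrt T}MV^{-1}(\boldsymbol{\lambda}_T-\boldsymbol{\lambda}_0)$, together with the identity $I_d+MV^{-1}A=J$, which is indeed correct since $(I_d-MB^{-1}A)(I_d+MV^{-1}A)=I_d+M\left[B^{-1}\left((B-AM)V^{-1}-I_d\right)\right]A=I_d$; the paper instead passes through the intermediate process $\boldsymbol{Y}_T$ centered at $\diag(m^1,\cdots,m^d)\int_0^T\E[\boldsymbol{\lambda}_s]\d s$ (Lemma \ref{lemma:Y} and Theorem \ref{th:Y}) and then strips off a deterministic $O(1/\sqrt T)$ drift in Section \ref{section:proof}; the algebra is the same and your version is more economical. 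Second, and more substantively, you incorporate $J$ into the divergence integrand and apply Theorem \ref{th:Anthony} directly to $\delta^{\boldsymbol N}\big(\frac{1}{\sqrt T}J\boldsymbol u\big)$ with target covariance $\tilde C=JCJ^\top$, whereas the paper applies the bound only to the untransformed $\boldsymbol{F}_T$ against the diagonal $C$ (Corollary \ref{th:main}, itself the $p=1$ case of Theorem \ref{th:multi-marginal}) and transfers the estimate to $J\boldsymbol{F}_T$ by composing test functions, $f\mapsto f(J\cdot)/\|J\|_{op}$, invoking Remark \ref{remark:coeff}. Both mechanisms work: since $\tilde C_{ij}=\sum_{k,l}J_{ik}J_{jl}C_{kl}$ and the integrand carries the same weights, your second-order terms reduce by the triangle inequality to exactly the quantities $A^{i,j}_1$, $A^{i,j}_2$ estimated in the proof of Theorem \ref{th:multi-marginal}; what the paper's detour buys is the multi-marginal statement as a by-product. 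Finally, two imprecisions in your sketch should be corrected before it becomes a proof. The off-diagonal entries of the Malliavin covariance do \emph{not} vanish pathwise ``because distinct components never jump simultaneously'': inserting a point in component $k$ triggers a cascade martingale $\hat M^{l,k,t,x}_T$ in \emph{every} component $l$, so these entries vanish only in expectation, and bounding their $L^1$ size at rate $1/\sqrt T$ is precisely the hardest estimate of the paper (the term $A^{i,j}_{1,3}$, handled by Lemma \ref{lemma:A13}). Also, $L^3$ bounds on the derivatives are neither needed nor what the assumptions naturally give: the third-order term requires only conditional second moments of the cascade martingales (which reduce, via $\E_t[\hat\lambda_s]=x\big(e^{-V(s-t)}\boldsymbol A_{\cdot k}\big)$, to first moments of intensities) combined with $\int x^3\nu^k(\d x)<\infty$ for the inserted mark, which is exactly what Assumption \ref{as:third} supplies.
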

\begin{proof}
	\textit {Cf} section \ref{section:proof}.
\end{proof}
\begin{remark}
	Assumption \ref{as:stab} is a special case of the assumption $$\rho \left ( \|\Phi\|_1 \diag (m^1,\cdots,m^d) \right)<1,$$
	since in the case of an exponential kernel one has 
	$$\|\Phi\|_1= \left (\| \Phi_{ij} \|_1\right )_{ij}=\left (\| \alpha_{ij}e^{\beta_i \cdot} \|_1\right )_{ij}=\left (\frac{\alpha_{ij}}{\beta_i}\right )_{ij}=B^{-1}A.$$
	In the case of a uni-variate compound Hawkes process $(d=1)$, this is equivalent to assuming 
	$$\|\Phi\|_1m=\frac{\alpha}{\beta}m <1.$$
\end{remark}
\begin{remark}
	If the memory parameters $(\beta_i)_{i=1,\cdots,d}$ are the same for every particle (\textit{i.e} $\Phi_{ij}(u)=\alpha_{ij}e^{-\beta u}$), Assumptions \ref{as:stab} and \ref{as:vanish} become equivalent.
\end{remark}
For instance, we take the bivariate compound Hawkes process $\boldsymbol{ L}_T=\left (L^1_T,L^2_T\right )$ where 
\begin{equation*}
\left\lbrace
\begin{array}{l}
L^1_t = \sum_{k=1}^{H^1_t} Y^1_k ,\\\\
L^2_t = \sum_{k=1}^{H^2_t} Y^2_k,
\end{array}
\right.
\end{equation*}
with \textit{i.i.d} claims $(Y_k^i)_{i=1,2,k\in \mathbb N}$ with common exponential distribution $\mathcal E (1)$. The intensities are assumed to follow the dynamics 
$$
\boldsymbol{\lambda}_t = \boldsymbol{ \mu}+\int_{[0,t)}Ae^{-\beta (t-s)} \d \boldsymbol{ L}_s.
$$
For stability, it is enough to choose the parameters such that $\rho(A)<\beta$. For instance, by setting 
\begin{equation*}A=
\begin{pmatrix}
\frac{1}{2} & 2\\
2 & \frac{1}{2}
\end{pmatrix},
\end{equation*}
any $\beta > \frac{5}{2}$ satisfies Assumptions \ref{as:stab} and \ref{as:vanish}. Assumption \ref{as:third} is satisfied because the exponential distribution has moments of every order.\\
Figure \ref{fig:hist} illustrates the convergence of the compensated loss (\textit{cf.} Theorem \ref{th:intro}) to its centered Gaussian limit $\mathcal N(0,\tilde C)$ (also defined in Theorem \ref{th:intro}) 
\begin{figure}[H]
	\label{fig:hist}
	\centering
	\includegraphics[width=120mm]{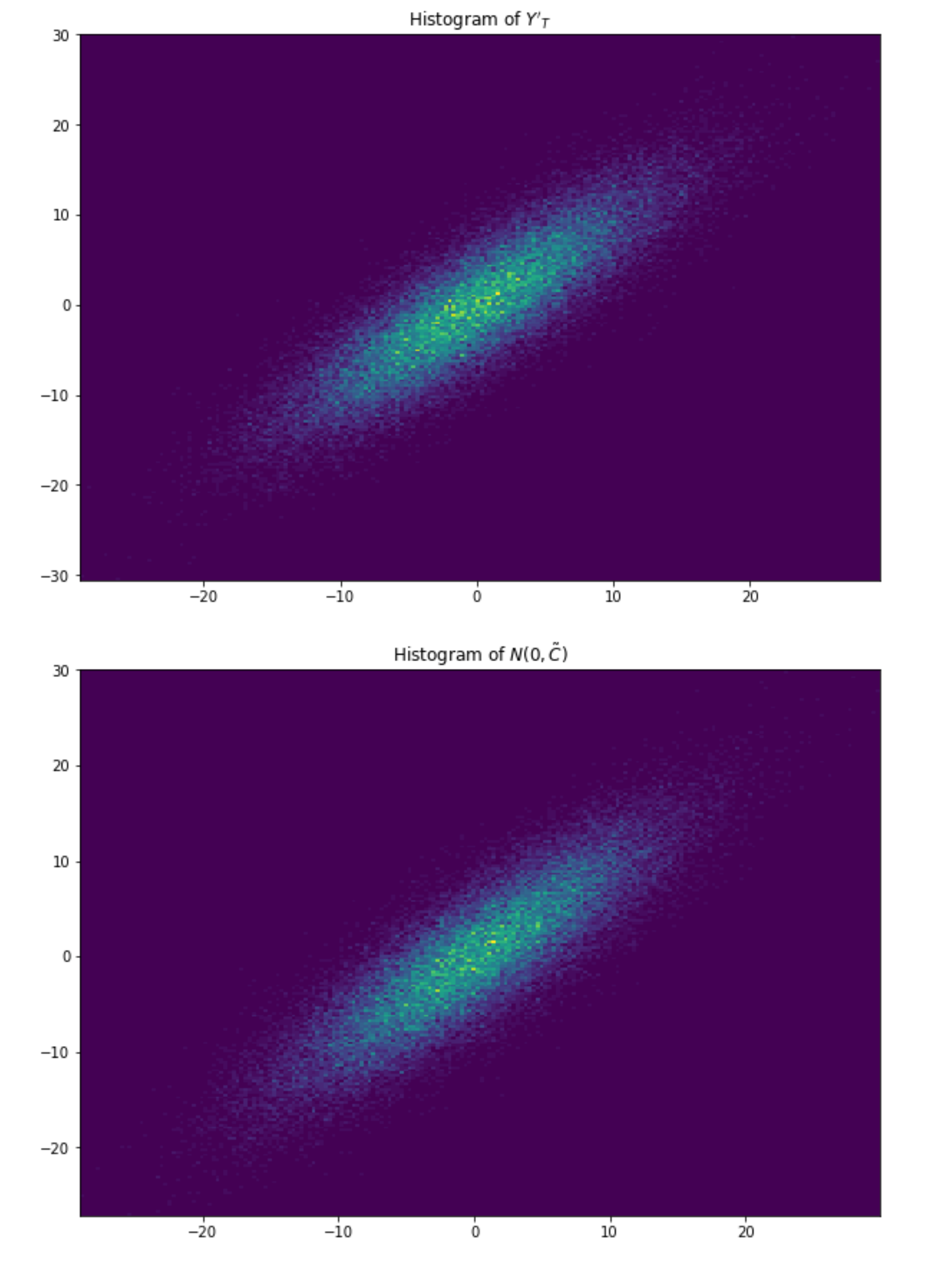}
	\caption{$2-D$ histograms of $\boldsymbol{ Y'}_T$ and $\mathcal N(0,\tilde C)$ for $\beta =4$, $\boldsymbol {\mu}=(2,3)^\top$ and $T=1000$ for $n=40000$ Monte Carlo samples.} 
\end{figure}

On the other hand, showing the speed of convergence in the $d_2$ metric is a bit more complicated, mainly for two reasons:
\begin{enumerate}
	\item We do not know for which function the upper bound is reached for the sake of simulation.
	\item Since the quantity $\mathbb E [f(\boldsymbol{ Y'}_T)]$ cannot be computed directly, is has to be approximated with a Monte Carlo estimation $\frac{1}{n}\sum_{k=1}^n f(\boldsymbol{ Y'}^k_T)$. This means that for large times, the term in $\frac{1}{\sqrt T}$ can be eclipsed by the slow decay of the Monte Carlo estimator which is in $\frac{1}{\sqrt n}$. 
\end{enumerate}	
	 We can still illustrate the behaviour of $ \frac{1}{n}\sum_{k=1}^n f(\boldsymbol{ Y'}^k_T) -\mathbb E [f(\boldsymbol{G})]$ for $\boldsymbol{G} \sim \mathcal N(0,\tilde C)$, where $f$ is a "well behaved" function for which $\mathbb E [f(\boldsymbol{ G})]$ is know explicitly. For instance, the following figure shows the evolution in time for $f(\boldsymbol{ x})=e^{-\frac{1}{4} \|\boldsymbol{ x}\|^2}$ 
	 \begin{figure}[h!]
	 	\label{fig:slope}
	 	\centering
	 	\includegraphics[width=120mm]{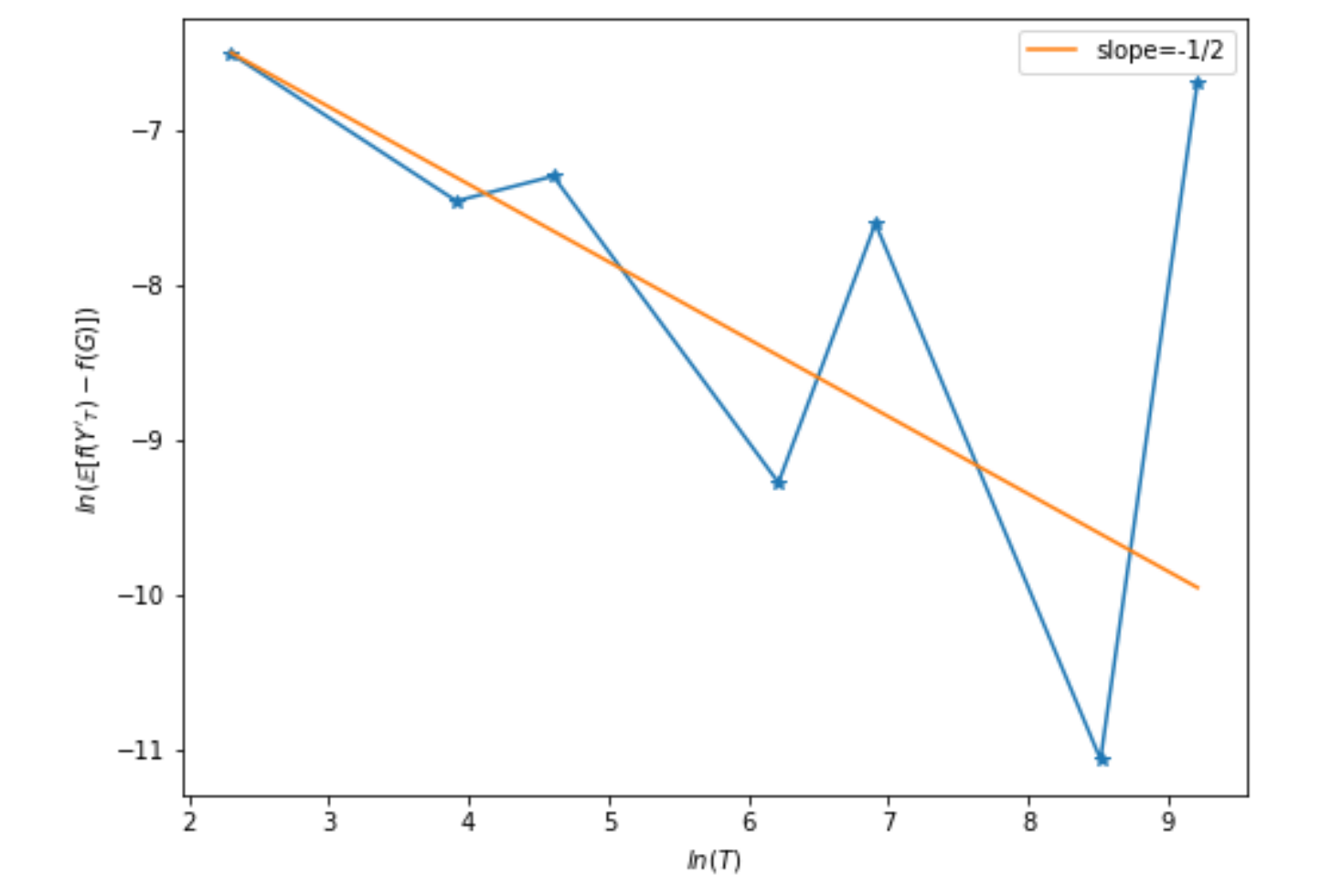}
	 	\caption{Monte Carlo estimator $\frac{1}{n}\sum_{k=1}^n f(\boldsymbol{ Y'}^k_T) -\mathbb E [f(\boldsymbol{G})] $ (in blue) for $\beta =6$, $T=[10,50,100,500,1000,5000,10000]$ and $n=150000$. Notice how for large $T$, the Monte Carlo estimator's error becomes stronger.}
	 \end{figure}
	\section{Notations and preliminaries}

	In this section we generalize the mathematical framework introduced in Section 2 of \cite{hillairet2021malliavinstein}. We then proceed to define the multivarite Hawkes loss as the result of a thinning procedure from a Poisson measure. 
	We finally recall some elements of Stein's method in 
	\subsection{General notations}
	\begin{enumerate}
		\item Let $d$ be an integer. For any two vectors $\boldsymbol{ u}=\left ( u^i\right )_{i=1,\cdots,d}$ and $\boldsymbol{ v}=\left ( v^i\right )_{i=1,\cdots,d}$ in $\R^d$, the product $\boldsymbol{ uv}$ is defined as the vector of $\R^d$ such that $\boldsymbol{ uv}=\left ( u^iv^i\right )_{i=1,\cdots,d}$
		\item Let $\theta \in \R$ and $\mathds 1_{\theta\leq u^i}$ be the indicator of the set $\{\theta \leq u^i\}$. We define $\mathbf 1_{\theta\leq \boldsymbol{ u}}$ as the element of $\R^d$ such that $\mathbf 1_{\theta \leq \boldsymbol{ u}}=\left (\mathds 1_{\theta \leq u^i} \right )_{i=1,\cdots,d}.$
		\item Similarly, if $\boldsymbol{ \nu}$ is a vector of $d$ measures and $\boldsymbol f$ is a vector of $d$ functions, we define $\int \boldsymbol f (x)\boldsymbol{ \nu}(\d x)= \left (\int f^i(x) \nu^i (\d x) \right )_{i=1,\cdots,d}.$
		\item  The inner product $\langle\cdot,\cdot\rangle$ corresponds to the Euclidian inner product and $\|\cdot\|$ is its norm.\\
		\item The operator norm of a matrix $A \in \mathcal{M}_d(\R)$ is 
		$$\|A\|_{op}:= \sup_{\|\boldsymbol x\|=1}\|A \boldsymbol x\|.$$
		\item For every function $g :\R^d \to \R$, let 
		$$\|g\|_{Lip}:= \sup _{\boldsymbol x\neq \boldsymbol y} \frac{|g(\boldsymbol x)-g(\boldsymbol y)|}{\|\boldsymbol x-\boldsymbol y\|}.$$
		\item If $g\in \mathcal C^1(\R^d)$ (continuously differentiable) then we write
		$$M_2(g):=\sup_{\boldsymbol x\neq \boldsymbol y} \frac{\|\nabla f( \boldsymbol x)-\nabla f(\boldsymbol y)\|}{\|\boldsymbol x-\boldsymbol y\|}.$$
		\item Similarly, if $g\in \mathcal C^2(\R^d)$ (twice continuousy differentiable) then 
		$$M_3(g):=\sup_{\boldsymbol x\neq \boldsymbol y} \frac{\|\Hessian f(\boldsymbol x)-\Hessian f(\boldsymbol y)\|_{op}}{\|\boldsymbol x-\boldsymbol y\|}.$$
	\end{enumerate}
	\subsection{Elements of stochastic analysis on the multivariate Poisson space}
	\label{section: Poisson}
	Let $d$ be a positive integer. Let $\nu^1,\cdots,\nu^d$ be a family of integrable probability measures on $\R_+$ such that $\nu^i(\{0\})=0$ for every $i=1,\cdots,d$, and define $m^i=\int_{\R_+} x\nu^i(\d x)$.\\
	In this section, every component of the multivariate compound  Hawkes process is obtained through the thinning of $3-$component Poisson measure.\\
	Let the space of configurations $\Omega^d$,
	where 
	
	$$ \Omega:=\left\{\omega^j=\sum_{i=1}^{n} \delta_{(t_{i},\theta_i,x_i)}, \; 0=t_0 < t_1 < \cdots < t_{n}, \; (\theta_i,x_i) \in \real_+\times \real,  \; n\in \mathbb{N}\cup\{+\infty\} \right\}.$$
	
	Let $\mathcal F$ be the $\sigma$-field associated to the vague topology on $\Omega^d$, and $\P$ the Poisson measure under which the family 
	$$\boldsymbol{N}=\big (N^j \big)_{j=1,\cdots,d}$$
	where 
	$$ N^j\left ([0,t]\times[0,b]\times(-\infty,y]\right )(\omega^j):=\omega^j \left ([0,t]\times[0,b]\times(-\infty,y] \right ), \quad  (t,b,y) \in \real_+^3, \quad  j=1,\cdots,d$$
	is a family of independent homogeneous Poisson processes with intensity measures $\d t \otimes \d {\theta} \otimes \d\nu^j$, that is, 
	
	\begin{align*}
	\P\big [N^1([0,t_1]\times[0,b_1]\times(-\infty,y_1])=n_1,\cdots,N^d([0,t_d]\times[0,b_d]\times(-\infty,y_d])=n_d \big]\\= \prod_{j=1}^d\frac{\big(t_j b_j \nu_j\big ((-\infty,y_j]\big)\big)^{n_j}\exp{\big(-t_j b_j \nu_j\big ((-\infty,y_j]\big)}\big)}{n_j!}.
	\end{align*}
	We set $\mathbb F^{\boldsymbol N}=\big(\F^{\boldsymbol N}_t\big)_{t\geq 0}$ to be the natural filtration of $\big(N^j\big)_{j=1,\cdots,d}$. The expectation with respect to $\P$ is denoted by $\E[\cdot]$. For $t\geq 0$, we denote by $\E_t[\cdot]$ the conditional expectation $\E[\cdot \vert \mathcal F^{\boldsymbol N}_t]$.\\\\
	\begin{remark}
		It is possible to define all the Poisson measures on the same probability space $\Omega^d$ by taking $$\tilde N^j\left ([0,t]\times[0,b]\times(-\infty,y]\right )(\omega)=N^j\left ([0,t]\times[0,b]\times(-\infty,y]\right )(\omega^j),\quad  (t,b,y)\in \R^3_+ ,$$
		for any $\omega=\left ( \omega^1,\cdots,\omega^d\right )\in \Omega.$\\
		From now on we confound $(\tilde N^j)_{j=1,\cdots,d}$ with $( N^j)_{j=1,\cdots,d}.$
	\end{remark}
	We now generalize the operators defined in \cite{hillairet2021malliavinstein} for the $1-$dimensional setting. We start with the component-wise shift operator. 
	\begin{definition}[Shift operator]
		\label{definition:shift}
		Let $j\in \llbracket 1,d\rrbracket $.
		We define for $(t,\theta,x)$ in $\mathbb{R}_+^3$ the measurable maps  
		$$ 
		\begin{array}{lll}
		\eps_{(t,\theta,x)}^{j+} : &\Omega &\to \Omega\\
		&(\omega^1,\cdots,\omega^j,\cdots,\omega^d) &\mapsto  \big(\omega^1,\cdots,\eps_{(t,\theta,x)}^{j+}(\omega^j),\cdots,\omega^d\big),
		\end{array}
		$$
		where for any $A$ in $\mathcal{B}(\mathbb{R}_+^2\times \real)$
		$$\left (\eps_{(t,\theta,x)}^{j+}(\omega^j)\right )(A) := \omega^j\left (A \setminus {(t,\theta,x)} \right ) + \mathds{1}_A(t,\theta,x),$$
		with 
		$$ \mathds{1}_{A}(t,\theta,x):=\left\lbrace \begin{array}{l} 1, \quad \textrm{if } (t,\theta,x)\in A,\\0, \quad \textrm{else.}\end{array}\right. $$
	\end{definition}
	
	\begin{definition}[Malliavin's derivative]
		For $F$ in $\mathcal L^2(\Omega,\mathcal F_\infty,\P)$, we define $D^j F$ the Malliavin's derivative of $F$ as 
		$$ D^j_{(t,\theta,x)} F := F\circ \eps_{(t,\theta,x)}^{j+} - F, \quad (t,\theta,x) \in \real_+^3.  $$
		If $\boldsymbol{F}=(F^1,\cdots,F^n)$ for some $n\geq 2$ where $F^i \in L^2(\Omega,\mathcal F_\infty,\P) \quad \forall i=1,\cdots,n$,
		$$ D^j_{(t,\theta,x)} \boldsymbol{F} := (F^i\circ \eps_{(t,\theta,x)}^{j+} - F^i)_{i=1,\cdots,n}, \quad (t,\theta,x) \in \real_+^3$$
	\end{definition}
	\begin{definition}
		Let $\mathcal I$  be the sub-sigma field of $\mathcal{B}(\real_+^3)\otimes \mathcal F^N$   of stochastic processes $Z:=(Z_{(t,\theta,x)})_{(t,\theta,x) \in \real_+^3}$ in $\mathcal L^1(\Omega \times \real_+^3,\P\otimes \d t\otimes \d \theta \otimes \nu)$  such that 
		$$ D^j_{(t,\theta,x)} Z_{(t,\theta,x)} = 0, \quad \textrm{ for a.a. } (t,\theta,x) \in \real^3_+, \quad \textrm{for all } j =1,\cdots,d.$$
	\end{definition}
	
	\begin{definition}[Divergence operator]
		We set $\mathcal S$ the set of stochastic processes $Z:=(Z_{(t,\theta,x)})_{(t,\theta,x) \in \real_+^3}$ in $\mathcal I$ such that for every $j \in \llbracket 1,d\rrbracket $: 
		$$ \E\left[\int_{\real_+^3} \left|Z_{(t,\theta,x)}\right|^2 \d t \d \theta \nu^j(\d x)\right] + \E\left[\left(\int_{\real_+^3} Z_{(t,\theta,x)} N^j(\d t,\d \theta,\d x)\right)^2\right]<+\infty,$$
		where $\int_{\real_+^3} Z_{(t,\theta,x)} N^j(\d t,\d \theta,\d x)$  is understood in the sense of the Stieltjes integral.\\\\ 
		For $Z$ in $\mathcal S$, we set the divergence operator with respect to $N^j$ as  
		\begin{equation}
		\label{eq:delta}
		\delta^j(Z):=\int_{\real_+^3} Z_{(t,\theta,x)} N^j(\d t,\d \theta,\d x) - \int_{\real_+^3} Z_{(t,\theta,x)} \d t \d \theta \nu^j(\d x).
		\end{equation}
		For a vector $\boldsymbol{ Z} \in \mathcal S^d$, the divergence operator with respect to $\boldsymbol{N}$ is defined as
 		$$\delta ^{\boldsymbol{N}}(\boldsymbol Z)=\sum_{i=1}^d \delta^i(Z^i),$$
		and if $Z=(\boldsymbol Z^{\cdot 1},\cdots,\boldsymbol Z^{\cdot n})\in \mathcal S^{d\times n},$ is a matrix, its divergence is defined as
		\begin{align*}
		\delta ^{\boldsymbol{N}}({Z})&=\big(\delta^{\boldsymbol{N}}(\boldsymbol Z^{\cdot 1}),\cdots,\delta^{\boldsymbol{N}}(\boldsymbol Z^{\cdot n}) \big),\\
		&=\sum_{i=1}^d \left ( \delta ^i (Z^{i1}),\cdots,\delta^i(Z^{in})\right ).
		\end{align*}
		
	\end{definition}
	
	\subsection{Definition of the multivariate compound Hawkes process}
	\label{section:thinning}
	In this subsection we give a definition of the multivariate compound Hawkes process in the Markov framework. Consider the events times  $\tau^j_1,\tau^j_2,\cdots$ associated with the $j-$th component and define the counting process 
	$$H^j_t=\sum_{i\geq 1} \mathds 1_{\tau^j_i \leq t}.$$
	Now assume that each event $\tau^j_i$ corresponds to a random "loss" $Y^j_i \sim \nu^j$ such that the variables $(Y^j_i)_{i\geq 1}$ are independent and identically distributed \textit{(i.i.d)}.\\
	The compound process $L^j_t$ of the total loss attributed to the $j-$th component is defined as
	$$L^j_t=\sum_{i\geq 1} \mathds 1_{\tau^j_i \leq t}Y^j_i.$$
	To $(H^j)_{j=1,\cdots,d}$ and $(L^j)_{j=1,\cdots,d}$ we associate a predictable intensity vector $(\lambda^j)_{j=1,\cdots,d}$ such that 
	$$\P\big [H^j_{t+\d t}-H^j_t=1 |\F_{t-}\big]=\lambda^j_t \d t$$
	which tells us how likely it is for $H^j$ to jump between $t$ and $t+\d t$, right before $t$.\\
	The process $\boldsymbol L_t=(L^1_t,\cdots,L^d_t)_{t\geq 0}$ is called a multivariate compound Hawkes process if its intensity vector $\boldsymbol{\lambda}_t$ follows the dynamics
	\begin{align*}
	\lambda^j_t&=\mu^j+\int _{[0,t)} \sum_{k=1}^d \Phi_{jk}(t-s)\d L^k_s,\\
	&=\mu^j+\sum _{k=1}^d \sum_{\tau^k_i<t} \Phi_{jk}(t-\tau^k_i) Y^k_i,
	\end{align*}
	where $(\mu^1,\cdots,\mu^d) \in \R_+^d$ and $\big(\Phi_{i,j}\big)_{i,j=1,\cdots,d}$ are non-negative integrable functions. \\\\
	
	In this article, we restrict ourselves to the case where the kernels $\Phi$ are a family of exponential functions
	$$\Phi_{jk}(u)=\alpha_{jk}e^{-\beta_j u}$$
	where $(\beta_j)_{j=1,\cdots,d} \in (\R_+)^d$ and $A=(\alpha_{ij})_{i,j=1,\cdots,d} \in \mathcal M _d(\R_+)$. The intensity can be expressed under matrix form 
	\begin{equation}
	\label{dynamics}
	\boldsymbol \lambda _t = \boldsymbol{\mu}+\int_{[0,t)} e^{-B (t-s)} A \d \boldsymbol{L}_s,
	\end{equation}
	with $B=\diag(\beta_1,\cdots,\beta_d)$.
	Furthermore, we recall Assumptions \ref{as:stab}, \ref{as:vanish}, \ref{as:third}.

	We now introduce an equivalent definition of the multivariate compound Hawkes process, presented as the result of a stochastic differential equation \textit{(SDE)} with respect to a family of random Poisson measures.
	
	\begin{theorem}
		\label{th:HRR}
		Let $\boldsymbol{N}=\big(N^1,\cdots,N^d\big)$ be a family of independent Poisson measures as presented in (Section \ref{section: Poisson}).
		Let $\boldsymbol{\mu} \in \R^d_+$  and $A\in \mathcal M _d(\R_+) $ and $\left (\beta_1,\cdots, \beta_d \right ) \in \R_+^d$ such that the Assumption \ref{as:stab} is verified. The SDE below admits a unique solution $(\boldsymbol{L},\boldsymbol{H},\boldsymbol{\lambda})$ with $\boldsymbol H$ and $\boldsymbol L$  (resp. $\boldsymbol \lambda$) $\mathbb F^ {\boldsymbol N}$-adapted (resp. $\mathbb F^{\boldsymbol N}$-predictable) 
		\begin{equation}
		\label{eq:H}
		\left\lbrace
		\begin{array}{l}
		L^j_t = \displaystyle{\int_{(0,t]\times \real_+\times \real} x \mathds{1}_{\{\theta \leq \lambda^j_s\}} N^j(\d s,\d \theta,\d x)}, \quad t \geq 0, \quad j=1,\cdots,d,\\\\
		H^j_t = \displaystyle{\int_{(0,t]\times \real_+\times \real} \mathds{1}_{\{\theta \leq \lambda^j_s\}} N^j(\d s,\d \theta,\d x)},\quad t \geq 0, \quad j=1,\cdots,d,\\\\
		\lambda^j_t = \mu^j + \int_{(0,t)}\sum_{k=1}^d \alpha _{jk} e^{-\beta_j (t-u)} \d L^k_u,\quad t \geq 0 \quad j=1,\cdots,d.
		\end{array}
		\right.
		\end{equation}
		
		We set $\mathbb F^{\boldsymbol{H}}:=(\mathcal F_t^{\boldsymbol{H}})_{t \geq 0}$ (respectively $\mathbb F^{\boldsymbol{L}}:=(\mathcal F_t^{\boldsymbol{L}})_{t \geq 0}$) the natural filtration of ${\boldsymbol{H}}$ (respectively of ${\boldsymbol{L}}$) and $\mathcal F_\infty^{\boldsymbol{H}}:=\lim_{t\to+\infty} \mathcal F_t^{\boldsymbol{H}}$ (respectively $\mathcal F_\infty^{\boldsymbol{L}}:=\lim_{t\to+\infty} \mathcal F_t^{\boldsymbol{L}}$).  Obviously $\mathcal F_t^{\boldsymbol{H}} \subset \mathcal F_t^{\boldsymbol{L}} \subset \mathcal F_t^{\boldsymbol{N}}$ as ${\boldsymbol{H}}$ is completely determined by the jump times of ${\boldsymbol{H}}$ which are exactly those of ${\boldsymbol{L}}$.\\
		
	\end{theorem}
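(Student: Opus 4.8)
The plan is to build the solution pathwise, jump by jump, exploiting the fact that the exponential kernel turns $\boldsymbol\lambda$ into a piecewise-deterministic Markov process. Set $T_0=0$ and $\boldsymbol\lambda_0=\boldsymbol\mu$. On each interval $[T_n,T_{n+1})$ between successive jumps no atom of $\boldsymbol N$ lies below the intensity graph, so each coordinate relaxes deterministically according to $\dot\lambda^j_t=-\beta_j(\lambda^j_t-\mu^j)$, giving the explicit curve $\lambda^j_t=\mu^j+(\lambda^j_{T_n}-\mu^j)e^{-\beta_j(t-T_n)}$. The next jump time $T_{n+1}$ is the first $s>T_n$ at which some $N^j$ has an atom $(s,\theta,x)$ with $\theta\le\lambda^j_s$; at that instant $L^j$ increases by the mark $x$, $H^j$ by one, and reading off \eqref{dynamics} every coordinate $\lambda^i$ is shifted by $\alpha_{ij}x$. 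Iterating this recipe defines $(\boldsymbol L,\boldsymbol H,\boldsymbol\lambda)$ on $[0,T_\infty)$ with $T_\infty=\sup_n T_n$, and by construction each $\lambda^j_t$ is a measurable functional of the atoms of $\boldsymbol N$ strictly before $t$, hence $\mathbb F^{\boldsymbol N}$-predictable, while $H^j$ and $L^j$ are $\mathbb F^{\boldsymbol N}$-adapted.

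The crucial step is to rule out explosion, i.e.\ to show $T_\infty=+\infty$ almost surely. First I would take expectations in the intensity equation: the thinning representation yields $\mathbb E[\d L^k_u]=m^k\,\mathbb E[\lambda^k_u]\,\d u$, so $\boldsymbol g(t):=\mathbb E[\boldsymbol\lambda_{t\wedge T_\infty}]$ satisfies the linear Volterra equation $\boldsymbol g(t)=\boldsymbol\mu+\int_0^t e^{-B(t-u)}A\,\diag(m^1,\dots,m^d)\,\boldsymbol g(u)\,\d u$. Since the kernel $e^{-B(t-u)}A\,\diag(m^1,\dots,m^d)$ is integrable, this equation has a unique locally bounded solution on $[0,\infty)$, so $\mathbb E[\lambda^j_{t\wedge T_\infty}]$ is finite for every $t$; consequently $\mathbb E[H^j_{t\wedge T_\infty}]=\int_0^t\mathbb E[\lambda^j_{s\wedge T_\infty}]\,\d s<\infty$, which forbids accumulation of jump times on compacts and forces $T_\infty=+\infty$. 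Assumption \ref{as:stab} moreover keeps $\boldsymbol g$ bounded as $t\to\infty$, which is what the later estimates require; for mere well-posedness the finiteness of the $m^j$ already suffices.

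For uniqueness I would argue pathwise: for a fixed realisation of $\boldsymbol N$, any solution must coincide with the one just constructed on $[0,T_1]$, since before the first admissible atom the intensity is pinned down by the deterministic relaxation and the first jump is the first atom under the graph; an induction on $n$ then propagates the identity to every $[0,T_n]$, hence to $[0,\infty)$ by non-explosion. Alternatively one may compare two candidate intensities and close a Gr\"onwall estimate on $t\mapsto\mathbb E\big[\sup_{s\le t}\|\boldsymbol\lambda^{(1)}_s-\boldsymbol\lambda^{(2)}_s\|\big]$, using that the thinning depends Lipschitz-continuously on $\boldsymbol\lambda$.

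The main obstacle is the non-explosion step: one must justify the interchange of expectation with the recursive construction (typically by localising along the $T_n$ and passing to the limit by monotone convergence) before the Volterra comparison can legitimately be invoked, and then verify that the resolvent bound really controls the expected number of points uniformly enough to exclude an accumulation point of the jump times.
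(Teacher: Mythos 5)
Your proposal is correct in substance, but it takes a genuinely different route from the paper: the paper offers no argument at all for Theorem \ref{th:HRR}, simply deferring to Theorem 2.12 of \cite{hillairet2021malliavinstein} with the remark that the statement is its $d$-dimensional version, whereas you give a self-contained constructive proof. Your three ingredients --- the piecewise-deterministic construction (exponential relaxation $\dot\lambda^j_t=-\beta_j(\lambda^j_t-\mu^j)$ between atoms, jump of size $\alpha_{ij}x$ in every coordinate $i$ when $N^j$ places an admissible atom with mark $x$), non-explosion via the linear Volterra bound on $\E[\boldsymbol\lambda_t]$, and pathwise uniqueness by induction on the jump times --- are exactly the mechanism underlying the cited univariate result, correctly adapted to the mutually exciting setting. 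What your version buys is transparency about hypotheses: you rightly observe that non-explosion needs only $m^j=\int x\,\nu^j(\d x)<\infty$ (guaranteed since the $\nu^j$ are integrable) and that Assumption \ref{as:stab} is not needed for well-posedness but only to keep $\E[\boldsymbol\lambda_t]$ bounded as $t\to\infty$, which is what the paper's later estimates (Lemma \ref{lemma:expectation} and the CLT bounds) actually use; the paper's citation obscures this. Two points you should tighten if writing this out in full: first, since $\boldsymbol\lambda$ in \eqref{eq:H} is defined by an integral over $(0,t)$ and is therefore left-continuous, the relaxation on $[T_n,T_{n+1})$ starts from the post-jump value $\lambda^j_{T_n+}$, not $\lambda^j_{T_n}$; second, your Gr\"onwall alternative for uniqueness should not rest on a vague Lipschitz claim but on the compensator identity $\E\bigl[\int x\,\bigl|\mathds 1_{\{\theta\le\lambda^{(1),k}_u\}}-\mathds 1_{\{\theta\le\lambda^{(2),k}_u\}}\bigr|\,N^k(\d u,\d\theta,\d x)\bigr]=m^k\,\E\bigl[\int|\lambda^{(1),k}_u-\lambda^{(2),k}_u|\,\d u\bigr]$, which is what makes the estimate close; the pathwise induction you give first is cleaner and already sufficient. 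The localisation along $T_n$ plus monotone convergence that you flag as the main obstacle is indeed the only genuinely delicate step, and your sketch handles it the standard way.
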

	\begin{proof}
		This is merely a $d-$dimensional version of Theorem $2.12$ in \cite{hillairet2021malliavinstein}.
	\end{proof}
	\begin{definition}
		Let $\boldsymbol{ L}$ and $\boldsymbol{ \lambda}$ be the processes defined in Theorem \ref{th:HRR}. The multivariate Hawkes martingale $\boldsymbol{ M}$ is the process defined as $$\boldsymbol{ M}_T=\boldsymbol{ L}_T-\diag \left ( m^1,\cdots,m^d \right )\int_0^T \boldsymbol{ \lambda}_t \d t$$
		for any $T\geq 0$.
	\end{definition}
	\subsection{Malliavin's analysis of the multivariate compound Hawkes process}
	\label{section:Malliavin}
	This subsection is a generalization of Section 2.3 of \cite{hillairet2021malliavinstein} where we examine the impact of the $i-$th Malliavin's derivative on the $j-$th component of $\boldsymbol{L}$, $\boldsymbol{H}$ and $\boldsymbol{\lambda}$.
	
	\begin{defprop}[Lemma 2.15 in \cite{hillairet2021malliavinstein}]
		\label{lemma:TempDH}
		Let $t$ and $v$ in $\real_+$ and $ (\theta,\theta_0,x)$ in $\real_+^3$. For every $j\in \llbracket 1,d\rrbracket$ it holds that :
		\begin{align*}
		& \quad \mathds{1}_{\{\theta \leq \lambda^j_t\}} (\boldsymbol{L}_v\circ \eps_{(t,\theta,x)}^{j+},\boldsymbol{\lambda}_v\circ \eps_{(t,\theta,x)}^{j+})_{v\geq 0} \\
		&= \mathds{1}_{\{\theta_0 \leq \lambda^j_t\}} (\boldsymbol{L}_v\circ \eps_{(t,\theta_0,x)}^{j+},\boldsymbol{\lambda}_v\circ \eps_{(t,\theta_0,x)}^{j+})_{v\geq 0}.
		\end{align*}
		which entails that for every -eventually vectorial- $\mathcal F_\infty^{\boldsymbol{L}}$-measurable random variable $\boldsymbol{F}$
		$$ \mathds{1}_{\{\theta \leq \lambda^j_t\}} D^j_{(t,\theta,x)} \boldsymbol{F} = \mathds{1}_{\{\theta_0 \leq \lambda^j_t\}} D^j_{(t,\theta_0,x)} \boldsymbol{F}, \quad \P-a.s.$$
		where the derivative is applied to each component of $\boldsymbol{F}.$
		The last equation allows us to define 
		$$ D^j_{(t,\lambda^j_t,x)} \boldsymbol{F}:= \mathds{1}_{\{\theta \leq \lambda^j_t\}} D^j_{(t,\theta,x)} \boldsymbol{F}, \quad \forall (\theta,x) \in \real_+^2$$
		as well as its vector version 
		\begin{align*}
		D_{(t,\boldsymbol \lambda_t,x)} \boldsymbol{F}&= \big( D^j_{(t,\lambda^j_t,x)} \boldsymbol{F}\big)_{j=1,\cdots,d},\\
		&:= \big(\mathds{1}_{\{\theta \leq \lambda^j_t\}} D^j_{(t,\theta,x)} \boldsymbol{F}\big)_{j=1,\cdots,d}, \quad \forall (\theta,x) \in \real_+^2.
		\end{align*}
	\end{defprop}
	\begin{proof}
		For each $i \in \llbracket 1,d\rrbracket$ and $v\geq t$, we have conditionally on $\theta_0 \leq \lambda^j_t$
		\begin{align*}
		L^i_v \circ \eps_{(t,\theta_0,x)}^{+j} 
		&=L^i_{t-} + x \mathds{1}_{\{\theta_0 \leq \lambda^j_t\}}\mathds 1_{i=j} + \int_{(t,v] \times \real} \int_{\real_+} y \mathds{1}_{\{\theta \leq \lambda^i_u \circ \eps_{(t,\theta_0,x)}^{+j}\}} N^i(\d \theta,\d u,\d y),
		\end{align*}
		and $L^i_v \circ \eps_{(t,\theta_0,x)}^{+j} =L^i_{v}$ if $t>v$.
		Similarly for $\boldsymbol{\lambda}$, we have 
		\begin{align*}
		\lambda^i_v \circ \eps_{(t,\theta_0,x)}^{+j} 
		&=\left(\mu_i + \int_{(0,t)}\sum_{k=1}^d e^{-\beta(v-u)}\alpha _{ik} \d L^k_u + \int_{[t,v)} \sum_{k=1}^d e^{-\beta(v-u)}\alpha _{ik} \d L^k_u \right) \circ \eps_{(t,\theta_0,x)}^{+j} \\
		&=\mu_i + \int_{(0,t)} \sum_{k=1}^d e^{-\beta(v-u)}\alpha _{ik} \d L^k_u  +x \alpha_{ij}e^{-\beta(v-t)} + \int_{(t,v)} \sum_{k=1}^d e^{-\beta(v-u)}\alpha _{ik}  \d (L^k_u\circ \eps_{(t,\theta_0,x)}^{+j}).
		\end{align*}
		In other words, $(\boldsymbol{L} \circ \eps_{(t,\theta_0,x)}^{+j},\boldsymbol{\lambda} \circ \eps_{(t,\theta_0,x)}^{+j})$ solves the same (path-wise and in the SDE sense) equation for any $\theta_0$ such that $\theta_0 \leq \lambda_t$.\\
		For the second equality, \textit{cf.} the proof of Proposition 2.16 in \cite{hillairet2021malliavinstein}.
	\end{proof}
	\begin{remark}
		It is also possible to include the process $\boldsymbol{H}$ in these results. From now on, $\boldsymbol{H}$ will be omitted and we will focus exclusively on $\boldsymbol{L}$ and $\boldsymbol{\lambda}$.
	\end{remark}
	Now we give the Malliavin's derivative of the multivariate compound Hawkes process as well as its intensity. To do so we start with introducing some notations. The vector $\boldsymbol{e}_i$ is the element of $\R^d$ that has $1$ in the $i-$th component and zero elsewhere.
	\begin{prop}
		\label{prop:DescDH}
		Let $t\geq 0$ and $x\in \real$. For every $j \in \llbracket 1,d\rrbracket  $, we have
		\begin{equation*}
		(D^j_{(t,\lambda^j_t,x)} \boldsymbol{L}_s,D^j_{(t,\lambda^j_t,x)}\boldsymbol{\lambda}_s )  = \left\lbrace
		\begin{array}{l}
		( x \boldsymbol{e}_j+\hat {\boldsymbol{L}}_s^{j,t,x},\hat {\boldsymbol{\lambda}}_s^{j,t,x}),   \quad   s\geq t, \\\\
		(0,0), \quad \quad \quad \quad  s<t
		\end{array}
		\right.
		\end{equation*}
		where the equality is understood path-wise and in the SDE sense and 
		where\\ $(\hat {\boldsymbol{L}}_s^{j,t,x},\hat {\boldsymbol{\lambda}}_s^{j,t,x})_{s\geq t}=\big ( (\hat {{L}}_{s}^{i,j,t,x},\hat {{\lambda}}_{s}^{i,j,t,x})_{s\geq t}\big)_{i=1,\cdots,d}$  is the unique solution to the SDE 
		\begin{equation}
		\label{eq:DH}
		\left\lbrace
		\begin{array}{l}
		\hat {{L}}_{s}^{i,j,t,x} = \displaystyle{\int_{(t,s]\times \real_+^2} y \mathds{1}_{\{\lambda^i_u \leq \theta \leq \lambda^i_u+ \hat \lambda_u^{i,j,t,x}\}} N^i(\d u,\d \theta,\d y)}, \quad s \geq t,\\\\
		\hat \lambda_s^{i,j,t,x} = x \alpha_{ij} e^{-\beta_i(s-t)} + \displaystyle{\sum_{k=1}^d} \displaystyle{\int_{(t,s)}\alpha_{ik} e^{-\beta_i(s-u)} \d\hat L_u^{k,j,t}}, \quad s>t, \; \hat \lambda_t^{i,j,t,x}=x\alpha_{ij},
		\end{array}
		\right.
		\end{equation}
		or under the matrix form 
		\begin{equation}
		\left\lbrace
		\begin{array}{l}
		\hat {\boldsymbol{L}}_{s}^{j,t,x} = \displaystyle{\int_{(t,s]\times \real_+^2} y \mathbf{1}_{\{\boldsymbol{\lambda}_u \leq \theta \leq \boldsymbol{\lambda}_u+ \hat{\boldsymbol{\lambda}}_u^{j,t,x}\}} \boldsymbol N(\d u,\d \theta,\d y)}, \quad s \geq t,\\\\
		\hat {\boldsymbol \lambda}_s^{j,t,x} = x e^{-B(s-t)}\boldsymbol A_{\cdot j}  +  \displaystyle{\int_{(t,s)} e^{-B(s-u)} A \d\hat {\boldsymbol L}_u^{j,t,x}}, \quad s>t, \; \hat {\boldsymbol \lambda}_t^{j,t,x}=x\boldsymbol A_{\cdot j},
		\end{array}
		\right.
		\end{equation}
		where the first integral is understood as a vector of integrals with respect to each Poisson measure and the indicator of a vector is understood as a vector of indicators.\\
		The process $(\hat {\boldsymbol L }_s^{j,t,x},\hat {\boldsymbol \lambda}_s^{j,t,x})_{ s \in [t,+\infty)}$ is a generalized multivariate compound Hawkes process, with an intensity vector that is not bounded away from $0$.
	\end{prop}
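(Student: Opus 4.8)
The plan is to compare the solution $(\boldsymbol{L},\boldsymbol{\lambda})$ of \eqref{eq:H} with its image under the shift, $(\boldsymbol{L}\circ\eps_{(t,\theta,x)}^{j+},\boldsymbol{\lambda}\circ\eps_{(t,\theta,x)}^{j+})$ for a fixed $\theta\leq\lambda^j_t$, and to check that the difference — which by definition equals $D^j_{(t,\lambda^j_t,x)}(\boldsymbol{L},\boldsymbol{\lambda})$, thanks to the $\theta$-independence granted by Definition and Proposition \ref{lemma:TempDH} — solves the announced system. First I would treat $s<t$: since $\boldsymbol{L}$ is $\mathbb{F}^{\boldsymbol N}$-adapted and $\boldsymbol{\lambda}$ is $\mathbb{F}^{\boldsymbol N}$-predictable while the shift only inserts a single atom at time $t$ into the $j$-th configuration, the trajectories on $[0,t)$ are left unchanged, so $D^j_{(t,\lambda^j_t,x)}\boldsymbol{L}_s=D^j_{(t,\lambda^j_t,x)}\boldsymbol{\lambda}_s=0$ there, which is the second branch of the claim.

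For $s\geq t$, set $\hat{\boldsymbol\lambda}_s^{j,t,x}:=\boldsymbol{\lambda}_s\circ\eps_{(t,\theta,x)}^{j+}-\boldsymbol{\lambda}_s$ and $\hat{\boldsymbol L}_s^{j,t,x}:=\boldsymbol{L}_s\circ\eps_{(t,\theta,x)}^{j+}-\boldsymbol{L}_s-x\boldsymbol e_j$. Because the intensity depends linearly on $\d\boldsymbol L$, subtracting the two copies of the third line of \eqref{eq:H} and decomposing $\d(\boldsymbol{L}\circ\eps_{(t,\theta,x)}^{j+}-\boldsymbol{L})$ into its atom at $t$ — of size $x$ in coordinate $j$, since $\theta\leq\lambda^j_t$ forces a jump there — and its cascade part $\d\hat{\boldsymbol L}^{j,t,x}$ on the open interval $(t,s)$ isolates the deterministic term $x\alpha_{ij}e^{-\beta_i(s-t)}$ and reproduces the second line of \eqref{eq:DH}. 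For the loss I would use the thinning representation: for $i\neq j$ both copies are driven by the same measure $N^i$, and for $i=j$ by $N^j$ and $\eps_{(t,\theta,x)}^{j+}N^j$, which differ only by the atom at $t$ that supplies the $x\boldsymbol e_j$ term; subtracting the first line of \eqref{eq:H} from its shifted version therefore leaves, for each $i$, the difference of thinning indicators $\mathds 1_{\{\theta\leq\lambda^i_u+\hat\lambda_u^{i,j,t,x}\}}-\mathds 1_{\{\theta\leq\lambda^i_u\}}$ integrated against $N^i$.

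The crux is to rewrite this difference as $\mathds 1_{\{\lambda^i_u\leq\theta\leq\lambda^i_u+\hat\lambda_u^{i,j,t,x}\}}$, recovering the first line of \eqref{eq:DH}; this identity holds exactly when $\hat\lambda_u^{i,j,t,x}\geq0$, i.e. when inserting a point can only raise the future intensities. I expect this non-negativity to be the main obstacle, because $\hat{\boldsymbol\lambda}$ and $\hat{\boldsymbol L}$ are coupled through a feedback loop — extra intensity produces extra jumps, which in turn produce more extra intensity. I would establish it simultaneously with existence and uniqueness for \eqref{eq:DH} via the same jump-by-jump thinning (Picard) construction used to prove Theorem \ref{th:HRR}: the non-negativity of every kernel $\alpha_{ik}e^{-\beta_i\cdot}$ makes the iteration monotone and keeps the cascade intensity $\geq0$, while Assumption \ref{as:stab} bounds the expected total mass of the cascade and yields uniqueness. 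The only structural difference with \eqref{eq:H} is that the constant baseline $\boldsymbol\mu$ is replaced by the bounded, integrable and vanishing term $xe^{-B(s-t)}\boldsymbol A_{\cdot j}$, so the construction goes through verbatim and produces a generalized compound Hawkes process whose intensity is not bounded away from $0$, as asserted. For the purely one-dimensional bookkeeping behind the indicator identity I would refer to the proof of Proposition 2.16 in \cite{hillairet2021malliavinstein}.
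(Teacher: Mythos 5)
Your proposal is correct and takes essentially the same route as the paper: the paper's own proof consists of invoking, component by component, the one-dimensional argument of Proposition 2.19 in \cite{hillairet2021malliavinstein} (comparing the shifted solution $(\boldsymbol{L}\circ\eps^{j+}_{(t,\theta,x)},\boldsymbol{\lambda}\circ\eps^{j+}_{(t,\theta,x)})$ with the unshifted one and identifying the difference as the cascade SDE, noting that the jump size $x$ now enters because $\hat{\boldsymbol{\lambda}}$ is driven by $\d\hat{\boldsymbol{L}}$ rather than $\d\hat{\boldsymbol{H}}$), which is exactly the argument you spell out, including the monotonicity/thinning step that turns the difference of indicators into $\mathds 1_{\{\lambda^i_u\leq\theta\leq\lambda^i_u+\hat\lambda^{i,j,t,x}_u\}}$. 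Your explicit treatment of the non-negativity of $\hat{\boldsymbol{\lambda}}$ (which holds since $x\geq 0$ on the support of $\nu^j$ and the kernels are non-negative) is a detail the paper leaves to the cited reference, but it is the right justification.
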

	\begin{proof}
		It is sufficient to apply the same procedure of Proposition 2.19 's proof  in \cite{hillairet2021malliavinstein} to each component of the vectors. Note that in this proposition, the subscript contains $x$, because in our case the size of the jump has an impact on the $\hat {\boldsymbol{\lambda}}$'s behaviour, since it is an integral with respect to $\d\hat {\boldsymbol{L}}$ and not merely $\d\hat {\boldsymbol{H}}$. 
	\end{proof}
	\begin{remark}
		\label{remark:tilde}
		Note that the process $\left (\hat {\boldsymbol{L}}_s^{j,t,x},\hat {\boldsymbol{\lambda}}_s^{j,t,x} \right )_{s\geq t}$ defined above is equal in distribution to the (generalized) Hawkes process $\left (\tilde {\boldsymbol{L}}_s^{j,t,x},\tilde {\boldsymbol{\lambda}}_s^{j,t,x} \right )_{s\geq t}$ defined as a solution to the SDE 
		\begin{equation}
		\left\lbrace
		\begin{array}{l}
		\tilde {\boldsymbol{L}}_{s}^{j,t,x} = \displaystyle{\int_{(t,s]\times \real_+^2} y \textbf{1}_{\{\boldsymbol 0 \leq \theta \leq  \tilde{\boldsymbol{\lambda}}_u^{j,t,x}\}} \tilde {\boldsymbol N}(\d u,\d \theta,\d y)}, \quad s \geq t,\\\\
		\tilde {\boldsymbol \lambda}_s^{j,t,x} = x e^{-B(s-t)}\boldsymbol A_{\cdot j}  +  \displaystyle{\int_{(t,s)} e^{-B(s-u)} A \d\tilde {\boldsymbol L}_u^{j,t,x}}, \quad s>t, \; \tilde {\boldsymbol \lambda}_t^{j,t,x}=x\boldsymbol A_{\cdot j},
		\end{array}
		\right.
		\end{equation}
		where $\tilde {\boldsymbol{ N}}$ is a family of Poisson measures independent from $\boldsymbol{ N}$ but have the same distributions.
	\end{remark}
	We conclude this section by stating the multivariate integration by parts (IBP) formula for the compound Hawkes martingale.

	\subsection{Multivariate Stein's method}
	\label{section:Stein}
	Stein's method is based on an alternative characterization of the Gaussian distribution. The combination of this characterization with elements of Malliavin's calculus (known as the Nourdin-Peccati approach) provides us with a way to estimate the distance between a random variable and a Gaussian.
	% In this paper, the inner product $\langle\cdot,\cdot\rangle$ corresponds to the Euclidian inner product and $\|\cdot\|$ is its norm.\\
	\begin{lemma}
		Let $C$ be a $d\times d$ real symmetric positive definite matrix and $\boldsymbol Y$ an $\R^d$ random variable. Then $\boldsymbol Y \sim \mathcal N (0,C)$ if and only if for every twice differentiable function $f: \R^d \to \R$ such that  $\E\big[|\langle C, \Hessian f(\boldsymbol Y)\rangle _{H.S}|+| \langle \boldsymbol Y,\nabla f(\boldsymbol Y)\rangle|\big] < +\infty$:
		$$\E\big[\langle C, \Hessian f(\boldsymbol Y)\rangle _{H.S}- \langle \boldsymbol Y,\nabla f(\boldsymbol Y)\rangle\big]=0$$
		where $\langle A,B \rangle_{H.S}= tr(AB^T)=\sum A_{ij}B_{ij}$ is the Hilbert-Schmidt matrix inner product and $\Hessian$ is the Hessian operator.
	\end{lemma}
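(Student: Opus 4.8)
The plan is to prove the two implications separately, writing $\mathcal A f(\boldsymbol x):=\langle C,\Hessian f(\boldsymbol x)\rangle_{H.S}-\langle \boldsymbol x,\nabla f(\boldsymbol x)\rangle$ for the candidate Stein operator. The forward implication is a Gaussian integration by parts, while for the converse I would feed the complex exponentials into the hypothesised identity and solve the resulting first-order equation for the characteristic function of $\boldsymbol Y$.

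\emph{The direction $\Rightarrow$.} Assuming $\boldsymbol Y\sim \mathcal N(0,C)$ with density $p(\boldsymbol y)=(2\pi)^{-d/2}(\det C)^{-1/2}\exp(-\tfrac12\boldsymbol y^\top C^{-1}\boldsymbol y)$, the starting point is the log-derivative identity $\nabla p(\boldsymbol y)=-C^{-1}\boldsymbol y\,p(\boldsymbol y)$, equivalently $\boldsymbol y\,p(\boldsymbol y)=-C\nabla p(\boldsymbol y)$. For any admissible $f$ this gives
$$\E[\langle \boldsymbol Y,\nabla f(\boldsymbol Y)\rangle]=\int \langle \nabla f(\boldsymbol y),\boldsymbol y\,p(\boldsymbol y)\rangle\,\d\boldsymbol y=-\int \langle \nabla f(\boldsymbol y),C\nabla p(\boldsymbol y)\rangle\,\d\boldsymbol y.$$
Expanding the inner product as $\sum_{i,j}C_{ij}\,\partial_i f\,\partial_j p$ and integrating by parts in each $y_j$ --- the boundary terms vanishing thanks to the Gaussian decay of $p$ together with the integrability hypothesis --- transfers the derivative from $p$ onto $\partial_i f$ and yields $\int \sum_{i,j}C_{ij}\,\partial_i\partial_j f(\boldsymbol y)\,p(\boldsymbol y)\,\d\boldsymbol y=\E[\langle C,\Hessian f(\boldsymbol Y)\rangle_{H.S}]$, i.e. exactly $\E[\mathcal A f(\boldsymbol Y)]=0$.

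\emph{The direction $\Leftarrow$.} Here I would fix $\boldsymbol t\in\R^d$ and apply the identity to the real and imaginary parts of $f(\boldsymbol y)=e^{i\langle \boldsymbol t,\boldsymbol y\rangle}$, for which $\nabla f=i\boldsymbol t\,f$ and $\Hessian f=-\boldsymbol t\boldsymbol t^\top f$, so that $\langle C,\Hessian f\rangle_{H.S}=-(\boldsymbol t^\top C\boldsymbol t)f$. With $\varphi(\boldsymbol t)=\E[e^{i\langle\boldsymbol t,\boldsymbol Y\rangle}]$ and the observation $\E[i\langle\boldsymbol t,\boldsymbol Y\rangle e^{i\langle\boldsymbol t,\boldsymbol Y\rangle}]=\langle \boldsymbol t,\nabla_{\boldsymbol t}\varphi(\boldsymbol t)\rangle$, the Stein identity turns into the first-order linear equation
$$\langle \boldsymbol t,\nabla_{\boldsymbol t}\varphi(\boldsymbol t)\rangle=-(\boldsymbol t^\top C\boldsymbol t)\,\varphi(\boldsymbol t),\qquad \varphi(0)=1.$$
Restricting to a ray $\boldsymbol t=s\boldsymbol t_0$ and setting $\psi(s)=\varphi(s\boldsymbol t_0)$ reduces this to the scalar ODE $\psi'(s)=-s\,(\boldsymbol t_0^\top C\boldsymbol t_0)\,\psi(s)$ with $\psi(0)=1$, whose unique solution is $\psi(s)=\exp(-\tfrac12 s^2\boldsymbol t_0^\top C\boldsymbol t_0)$. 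Taking $s=1$ and letting $\boldsymbol t_0$ range over $\R^d$ gives $\varphi(\boldsymbol t)=\exp(-\tfrac12\boldsymbol t^\top C\boldsymbol t)$, the characteristic function of $\mathcal N(0,C)$; injectivity of the Fourier transform then forces $\boldsymbol Y\sim\mathcal N(0,C)$.

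\emph{Main obstacle.} The subtle point is the admissibility of the test functions in the converse: inserting $e^{i\langle\boldsymbol t,\boldsymbol y\rangle}$ requires $\E[|\langle\boldsymbol t,\boldsymbol Y\rangle|]<\infty$, whereas a priori only the existence of $\boldsymbol Y$ is granted. I would handle this either (i) by first proving that $\boldsymbol Y$ has finite first moments, applying the identity to a sequence of smooth bounded functions whose gradients decay fast enough to keep $\langle \boldsymbol y,\nabla f(\boldsymbol y)\rangle$ bounded and then passing to the limit; or (ii) by replacing the characteristic-function computation with the generator route, taking for bounded smooth $h$ the function $f_h(\boldsymbol x)=-\int_0^{\infty}\big(P_t h(\boldsymbol x)-\E[h(\boldsymbol G)]\big)\,\d t$, where $\boldsymbol G\sim\mathcal N(0,C)$ and $(P_t)$ is the Ornstein--Uhlenbeck semigroup with invariant law $\mathcal N(0,C)$. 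Since $f_h$ solves $\mathcal A f_h=h-\E[h(\boldsymbol G)]$ and has bounded derivatives, the identity delivers $\E[h(\boldsymbol Y)]=\E[h(\boldsymbol G)]$ over a distribution-determining class of $h$, which again yields $\boldsymbol Y\sim\mathcal N(0,C)$.
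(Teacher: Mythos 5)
The paper does not actually prove this lemma: its ``proof'' is a one-line pointer to \cite{hillairet2021malliavinstein} (whose statement is univariate; the multivariate version is closer to \cite{nourdin2010multivariate} and \cite{giovanni2010multi}). Your argument is therefore necessarily a different, self-contained route, and it is the standard one; in outline it is correct, but two points need tightening before it is a proof. First, in the forward direction, the boundary term produced by your integration by parts over a ball $B_R$ is
$$\frac{1}{R}\int_{\partial B_R}\langle \nabla f(\boldsymbol y),C\boldsymbol y\rangle\, p(\boldsymbol y)\,\d S(\boldsymbol y),$$
and your integrability hypothesis controls $\langle \nabla f(\boldsymbol y),\boldsymbol y\rangle$, not $\langle \nabla f(\boldsymbol y),C\boldsymbol y\rangle$; the two are not pointwise comparable (think of $\nabla f$ orthogonal to $\boldsymbol y$ but not to $C\boldsymbol y$). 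The clean fix is to reduce first to $C=I_d$ by writing $\boldsymbol Y=C^{1/2}\boldsymbol Z$ and $g:=f(C^{1/2}\,\cdot\,)$, for which $\langle C,\Hessian f(\boldsymbol Y)\rangle_{H.S}=\Delta g(\boldsymbol Z)$ and $\langle \boldsymbol Y,\nabla f(\boldsymbol Y)\rangle=\langle \boldsymbol Z,\nabla g(\boldsymbol Z)\rangle$, and then let $R\to\infty$ along a subsequence on which $\int_{\partial B_R}|\langle \boldsymbol z,\nabla g\rangle|\,p\,\d S\to 0$; such a subsequence exists precisely because of the integrability hypothesis.

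Second, in the converse, you correctly flag that $e^{i\langle \boldsymbol t,\cdot\rangle}$ is inadmissible without moment assumptions, but your two repairs are not on an equal footing. Fix (ii) closes the gap by itself, for a reason you stop just short of stating: admissibility of the Stein solution $f_h$ is automatic from the very equation it solves, since
$$\langle \boldsymbol y,\nabla f_h(\boldsymbol y)\rangle=\langle C,\Hessian f_h(\boldsymbol y)\rangle_{H.S}-h(\boldsymbol y)+\E[h(\boldsymbol G)]$$
is bounded as soon as $h$ and $\Hessian f_h$ are bounded (which holds for bounded Lipschitz $h$), so no moment on $\boldsymbol Y$ is needed, and $\E[h(\boldsymbol Y)]=\E[h(\boldsymbol G)]$ over this distribution-determining class concludes. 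Fix (i) also works but requires an explicit construction, e.g.\ $f_R=\chi_R(\|\cdot\|^2/2)$ with $\chi_R$ concave, $\chi_R(u)=u$ on $[0,R^2/2]$, $\chi_R'\in[0,1]$ supported in $[0,2R^2]$ and $|\chi_R''|\le c/R^2$: each $f_R$ is admissible, the identity gives $\E\big[\chi_R'(\|\boldsymbol Y\|^2/2)\,\|\boldsymbol Y\|^2\big]=\E\big[\langle C,\Hessian f_R(\boldsymbol Y)\rangle_{H.S}\big]\le \operatorname{tr}(C)$, and Fatou yields $\E[\|\boldsymbol Y\|^2]\le \operatorname{tr}(C)<+\infty$, after which your characteristic-function ODE along rays is legitimate. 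I would present (ii) as the proof and keep the ray ODE as a remark; as written, the exponential route is the headline argument and it is the one that does not stand on its own.
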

	\begin{proof}
		\textit{Cf.} \cite{hillairet2021malliavinstein} for example.
	\end{proof}
	This shows that if for a sufficiently large class of functions $f$ one has 
	$$\E\big[\langle C, \Hessian f(\boldsymbol Y)\rangle _{H.S}- \langle \boldsymbol Y,\nabla f(\boldsymbol Y)\rangle\big]\simeq 0$$
	in some sense, then the random variable $\boldsymbol Y$'s distribution is fairly "close" to $\mathcal{N}(0,C)$.\\
	We now introduce the function classes as well as the metric that we will use to quantify how close the given distance of a random variable to a Gaussian distribution.
	\begin{definition}
		\begin{enumerate}
			\item The distance $d_2$ between two integrable random variables $\boldsymbol X$ and $\boldsymbol Y$ is given by 
			$$d_2(\boldsymbol X,\boldsymbol Y):= \sup_{f\in \mathcal H} |\E[f(\boldsymbol X)]-\E[f(\boldsymbol Y)]|,$$
			where
			$$\mathcal H :=\{g \in \mathcal C^2(\R^d), ~\text{such that } \|g\|_{Lip} \leq 1 \text{ and } M_2(g)\leq 1 \}.$$
			\item For a fixed matrix $C\in \mathcal S^{++}_d(\R)$ we define $\F_C$ to be the functional space 
			$$\F_C:= \{g \in \mathcal C^2(\R^d), ~\text{such that } M_2(g)\leq \|C^{-1}\|_{op} \|C\|_{op}^{1/2}~ \text{, }M_3(g)\leq \frac{\sqrt{2\pi}}{4}\|C^{-1}\|_{op}^{3/2} \|C\|_{op} \}.$$

		\end{enumerate}
		
	\end{definition} 
	Combining the definitions with the multivariate Stein's equation, it is possible to prove (following the lines of Lemma 2.17 in \cite{giovanni2010multi}) the following estimate of the $d_2$ distance between a centered variable $\boldsymbol F$ and $\boldsymbol G \sim \mathcal N_d(0,C)$:
	$$d_2(\boldsymbol F,\boldsymbol G)\leq \sup _{f\in \mathcal F_C} \big |\E\big[\langle C, \Hessian f(\boldsymbol F)\rangle _{H.S}- \langle \boldsymbol F,\nabla f(\boldsymbol F)\rangle\big]\big|.$$
	\begin{remark}
		Unlike for the univariate Poisson space case (\textit{cf.} \cite{hillairet2021malliavinstein}) or the multivariate variables on a Gaussian space \cite{nourdin2010multivariate}, the Wasserstein distance $d_W$ is not well adapted to our computations. Nevertheless, if $d_2(\boldsymbol F_T,\boldsymbol G)\longrightarrow 0$ as $T$ goes to infinity implies that $\boldsymbol F_T$ converges to the Gaussian $\boldsymbol G$ in distribution (\textit{cf.} Remark 2.16 in \cite{giovanni2010multi}) .
	\end{remark}
	\section{Main results}
	\label{section:main}
\subsection{General bound}
\label{section:General}
In this section, we give a bound on the $d_2$ distance between any random variable that can be expressed as a divergence with respect to the Poisson measure.
	\begin{lemma}
		\label{lemma:taylor}
		Let n be an integer and let $\boldsymbol{F}=(F^1,\cdots,F^n)$ be an $\F^{\boldsymbol L}_{\infty}$ such that  $\E [\|\boldsymbol F\|^2]\leq +\infty$. For all $\phi \in \mathcal C^3 (\R^n )$ with bounded derivatives, for any $k=1,\cdots,n$, there exists a random $\bar {\boldsymbol{F}}$ such that 
		$$D^k_{(t,\theta,x)}\partial_i\phi(\boldsymbol{F})=\langle \nabla \partial_i\phi(\boldsymbol F),D^k_{(t,\theta,x)} \boldsymbol F\rangle + \frac{1}{2}\langle D^k_{(t,\theta,x)}\boldsymbol{F} , \Hessian \partial_i\phi(\bar{\boldsymbol{F}})D^k_{(t,\theta,x)}\boldsymbol{F} \rangle,  ~ (t,\theta,x)\in \R_+^3$$
		where 
		$$\left |\langle \boldsymbol y , \Hessian \partial_k\phi (\bar {\boldsymbol{F}}) \boldsymbol{y}\rangle \right | \leq  \|\boldsymbol y\|^2 M_3(\phi), ~ \boldsymbol{y} \in \R^d$$
	.
		
	\end{lemma}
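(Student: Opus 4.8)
The plan is to recognise the Malliavin derivative as a pure finite-difference operator and then to Taylor-expand, working pathwise. By definition $D^k_{(t,\theta,x)}\boldsymbol F = \boldsymbol F \circ \eps_{(t,\theta,x)}^{k+} - \boldsymbol F$, so the shifted configuration satisfies $\boldsymbol F \circ \eps_{(t,\theta,x)}^{k+} = \boldsymbol F + D^k_{(t,\theta,x)}\boldsymbol F$. Applying the definition to the scalar map $g := \partial_i\phi$, which is $\mathcal C^2$ since $\phi \in \mathcal C^3$, gives $D^k_{(t,\theta,x)}\partial_i\phi(\boldsymbol F) = g(\boldsymbol F + \boldsymbol h) - g(\boldsymbol F)$ with increment $\boldsymbol h := D^k_{(t,\theta,x)}\boldsymbol F \in \R^n$. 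Everything below is carried out for fixed $\omega$ and fixed $(t,\theta,x)$, so $\boldsymbol F$ and $\boldsymbol h$ may be treated as fixed points of $\R^n$.

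First I would invoke the second-order Taylor formula with Lagrange remainder for a scalar function of a vector argument: for any $\boldsymbol a, \boldsymbol h \in \R^n$ there is $\xi \in [0,1]$ with $g(\boldsymbol a + \boldsymbol h) = g(\boldsymbol a) + \langle \nabla g(\boldsymbol a), \boldsymbol h\rangle + \frac{1}{2}\langle \boldsymbol h, \Hessian g(\boldsymbol a + \xi \boldsymbol h)\boldsymbol h\rangle$. Setting $\boldsymbol a = \boldsymbol F$, $\boldsymbol h = D^k_{(t,\theta,x)}\boldsymbol F$ and $\bar{\boldsymbol F} := \boldsymbol F + \xi\, D^k_{(t,\theta,x)}\boldsymbol F$, and recalling $\nabla g = \nabla \partial_i\phi$ and $\Hessian g = \Hessian \partial_i\phi$, produces exactly the claimed identity. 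The point $\bar{\boldsymbol F}$ lies on the segment between $\boldsymbol F$ and $\boldsymbol F \circ \eps_{(t,\theta,x)}^{k+}$; a jointly measurable choice of $\xi$, hence of $\bar{\boldsymbol F}$, can be made by a standard measurable-selection argument, which I would only mention in passing.

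The substantive step is the remainder bound, and here the subtlety is that $M_3(\phi)$ controls the operator-norm Lipschitz constant of $\Hessian\phi$, whereas the remainder involves $\Hessian\partial_i\phi$, i.e. third derivatives of $\phi$ with one index frozen. (I also note that the index on $\partial_k\phi$ in the displayed bound should read $\partial_i\phi$ to match the identity above, and that the ambient dimension in $\boldsymbol y \in \R^d$ should be $\R^n$.) To bridge this, fix $\boldsymbol y \in \R^n$ and consider the scalar field $\boldsymbol x \mapsto q(\boldsymbol x) := \langle \boldsymbol y, \Hessian\phi(\boldsymbol x)\boldsymbol y\rangle$. For any $\boldsymbol x, \boldsymbol x'$ one has $|q(\boldsymbol x) - q(\boldsymbol x')| \le \|\boldsymbol y\|^2 \|\Hessian\phi(\boldsymbol x) - \Hessian\phi(\boldsymbol x')\|_{op} \le \|\boldsymbol y\|^2 M_3(\phi)\|\boldsymbol x - \boldsymbol x'\|$, so $q$ is globally Lipschitz with constant $\|\boldsymbol y\|^2 M_3(\phi)$; finiteness of $M_3(\phi)$ is guaranteed by the bounded-derivatives hypothesis on $\phi$. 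A scalar Lipschitz function has Euclidean gradient bounded by its Lipschitz constant, so in particular each of its partial derivatives obeys the same bound. Since differentiating the quadratic form gives $\partial_i q(\boldsymbol x) = \langle \boldsymbol y, \Hessian(\partial_i\phi)(\boldsymbol x)\boldsymbol y\rangle$, I conclude $|\langle \boldsymbol y, \Hessian\partial_i\phi(\bar{\boldsymbol F})\boldsymbol y\rangle| = |\partial_i q(\bar{\boldsymbol F})| \le \|\boldsymbol y\|^2 M_3(\phi)$, which is the desired estimate. The only real obstacle is this identification of the frozen-index third-derivative form as a partial derivative of the Hessian quadratic form; once that is in hand, the bound follows from the Lipschitz-implies-bounded-gradient principle.
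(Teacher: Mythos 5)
Your proof is correct and follows essentially the same route as the paper: the identity comes from writing the Malliavin derivative as a finite difference and applying the second-order Taylor formula at an intermediate point, and the remainder bound rests on Schwarz's theorem together with the fact that $M_3(\phi)$ is the Lipschitz constant of $\Hessian\phi$ in operator norm. Your scalarization of the Hessian quadratic form (Lipschitz-implies-bounded-gradient applied to $q(\boldsymbol x)=\langle \boldsymbol y,\Hessian\phi(\boldsymbol x)\boldsymbol y\rangle$) is a mild repackaging of the paper's argument, which instead bounds $\|\Hessian\partial_i\phi(\bar{\boldsymbol F})\|_{op}$ directly, after Cauchy--Schwarz, as a limit of difference quotients of $\Hessian\phi$ using continuity of the operator norm.
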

	\begin{proof}
		The equality is merely an application of the multivariate Taylor-Young Theorem, combined with the fact that
		\begin{align*}
		D^k_{(t,\theta,x)}\partial_i\phi(\boldsymbol{F})&=\partial_i\phi(\boldsymbol{F}\circ \varepsilon^{k+}_{(t,\theta,x)})-\partial_i\phi(\boldsymbol{F}),\\
		&=\partial_i\phi(\boldsymbol{F}+D^k_{(t,\theta,x)} \boldsymbol F)-\partial_i\phi(\boldsymbol{F}).
		\end{align*} 
		When it comes to the upper bound on the rest, we have using Cauchy-Schwarz
		\begin{align*}
		\left |\langle \boldsymbol y , \Hessian \partial_i\phi (\bar {\boldsymbol{F}}) \boldsymbol{y}\rangle \right |&\leq \|\boldsymbol{y}\| \|\Hessian \partial_i\phi (\bar {\boldsymbol{F}}) \boldsymbol{y}\|,\\
		&\leq \|\boldsymbol y\|^2 \|\Hessian \partial_i\phi (\bar {\boldsymbol{F}})\|_{op}.
		\end{align*}
		Using Schwarz's Theorem we have that 
		\begin{align*}
		\Hessian \partial_i\phi (\bar {\boldsymbol{F}})&=\partial_i \Hessian\phi (\bar {\boldsymbol{F}}),\\
		&=\lim_{h\to 0}\frac{\Hessian \phi (\bar {\boldsymbol{F}}+h\boldsymbol{ e}_i)-\Hessian \phi (\bar {\boldsymbol{F}})}{h},
		\end{align*} 
		and the result follows using the fact that the norm $\|\cdot\|_{op}$ is continuous.
	\end{proof}
This lemma will be useful in proving the following result.
	
	\begin{theorem}
		\label{th:Anthony}
		Let $n \in \mathbb N^*$. Let $Z=\left ( Z^{ki}_{(t,\theta,x)}\right )_{(t,\theta,x)\in \R^3_+}$ be a stochastic process in $\mathcal S ^{d\times n}$. Set $\boldsymbol{ F}=\delta^{\boldsymbol{ N}} (Z)$ the divergence of $Z$. Then, letting $\boldsymbol{ G} \sim \mathcal N (0,C)$ (for $C\in \mathcal S_n^{++}(\R)$) 
		we have
		\begin{align*}
		d_2(\boldsymbol{ F},\boldsymbol{ G})
		\leq& \|C^{-1}\|_{op} \|C\|_{op}^{1/2}\sum_{i,j=1}^n \E \left [\left |C_{ij}-\sum_{k=1}^d\int _{\R^3_+}Z^{ki}_{(t,\theta,x)}D^k_{(t,\theta,x)}  F^j \mathrm d t \mathrm \d \theta \nu^k(\mathrm \d x) \right |\right ]\\
		&+\frac{\sqrt{2\pi}}{8}\|C^{-1}\|_{op}^{3/2} \|C\|_{op}  \sum_{i,j=1}^n \sum_{k=1}^d \E \left[ \int _{\R^3_+}\left |Z^{ki}_{(t,\theta,x)}\right | \left \| D^k_{(t,\theta,x)} \boldsymbol{ F}\right \|^2\mathrm d t \mathrm \d \theta \nu^k(\mathrm \d x)\right].\\
		\end{align*}
	 If in particular  $Z^{ki}_{(t,\theta,x)}=\mathds 1_{\theta \leq \lambda^k_t}U^{ki}_{(t,x)}$, the upper bound takes the form 
	 \begin{align*}
	 d_2(\boldsymbol{ F},\boldsymbol{ G})\leq & \|C^{-1}\|_{op} \|C\|_{op}^{1/2}\sum_{i,j=1}^n \E \left [\left |C_{ij}-\sum_{k=1}^d\int _{\R^2_+}\lambda^k_tU^{ki}_{(t,x)}D^k_{(t,\lambda^k_t,x)}  F^j \mathrm d t \nu^k(\mathrm \d x) \right |\right ]\\
	 &+\frac{\sqrt{2\pi}}{8}\|C^{-1}\|_{op}^{3/2} \|C\|_{op}  \sum_{i,j=1}^n \sum_{k=1}^d \E \left[ \int _{\R^2_+}\lambda^k_t \left |U^{ki}_{(t,x)}\right | \left \| D^k_{(t,\lambda^k_t,x)}  \boldsymbol{ F}\right \|^2\mathrm d t  \nu^k(\mathrm \d x)\right].
	 \end{align*}
	\end{theorem}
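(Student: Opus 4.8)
The plan is to start from the multivariate Stein estimate recalled just before the statement,
$$d_2(\boldsymbol{F},\boldsymbol{G})\leq \sup_{f\in\mathcal F_C}\big|\E\big[\langle C,\Hessian f(\boldsymbol F)\rangle_{H.S}-\langle \boldsymbol F,\nabla f(\boldsymbol F)\rangle\big]\big|,$$
and to bound the integrand uniformly over $f\in\mathcal F_C$ by rewriting $\E[\langle\boldsymbol F,\nabla f(\boldsymbol F)\rangle]$ so that it matches $\E[\langle C,\Hessian f(\boldsymbol F)\rangle_{H.S}]$ term by term. First I would expand $\langle\boldsymbol F,\nabla f(\boldsymbol F)\rangle=\sum_{i=1}^n F^i\partial_i f(\boldsymbol F)$ and use $F^i=\delta^{\boldsymbol N}(Z^{\cdot i})=\sum_{k=1}^d\delta^k(Z^{ki})$. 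Applying the duality (integration by parts) formula between $\delta^k$ and $D^k$ transfers each divergence onto the smooth factor, giving $\E[F^i\partial_i f(\boldsymbol F)]=\sum_{k=1}^d\E[\int_{\R^3_+}Z^{ki}_{(t,\theta,x)}D^k_{(t,\theta,x)}\partial_i f(\boldsymbol F)\,\d t\,\d\theta\,\nu^k(\d x)]$.

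The next step is to apply Lemma~\ref{lemma:taylor} to $D^k_{(t,\theta,x)}\partial_i f(\boldsymbol F)$, which splits it into a linear part $\langle\nabla\partial_i f(\boldsymbol F),D^k_{(t,\theta,x)}\boldsymbol F\rangle=\sum_{j=1}^n \Hessian f(\boldsymbol F)_{ij}\,D^k_{(t,\theta,x)}F^j$ and a quadratic remainder controlled through $M_3(f)$. Summing the linear part over $i,j,k$ produces precisely $\sum_{i,j=1}^n\Hessian f(\boldsymbol F)_{ij}\sum_{k=1}^d\int_{\R^3_+}Z^{ki}D^k F^j$, which I subtract from $\langle C,\Hessian f(\boldsymbol F)\rangle_{H.S}=\sum_{i,j=1}^n C_{ij}\Hessian f(\boldsymbol F)_{ij}$. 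In this ``linear'' difference I bound $|\Hessian f(\boldsymbol F)_{ij}|\leq\|\Hessian f(\boldsymbol F)\|_{op}\leq M_2(f)\leq\|C^{-1}\|_{op}\|C\|_{op}^{1/2}$, where the last inequality uses $f\in\mathcal F_C$; moving the absolute value inside the sum and taking expectations then yields the first term of the announced bound. For the quadratic remainder I use the estimate $|\langle\boldsymbol y,\Hessian\partial_i f(\bar{\boldsymbol F})\boldsymbol y\rangle|\leq\|\boldsymbol y\|^2 M_3(f)$ from Lemma~\ref{lemma:taylor} with $\boldsymbol y=D^k_{(t,\theta,x)}\boldsymbol F$, together with $M_3(f)\leq\frac{\sqrt{2\pi}}{4}\|C^{-1}\|_{op}^{3/2}\|C\|_{op}$ and the factor $\frac{1}{2}$ coming from the Taylor expansion; since the resulting summand does not depend on $j$ and is nonnegative, enlarging $\sum_{i}$ to $\sum_{i,j}$ reproduces exactly the stated second term.

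For the specialized form, I would substitute $Z^{ki}_{(t,\theta,x)}=\mathds{1}_{\theta\leq\lambda^k_t}U^{ki}_{(t,x)}$ and integrate out the $\theta$ variable using Definition and Proposition~\ref{lemma:TempDH}. Since $\mathds{1}_{\theta\leq\lambda^k_t}D^k_{(t,\theta,x)}\boldsymbol F=D^k_{(t,\lambda^k_t,x)}\boldsymbol F$ is independent of $\theta$ on $[0,\lambda^k_t]$ and vanishes beyond it, each $\d\theta$-integral contributes a factor $\lambda^k_t$; this collapses the $\R^3_+$-integrals to $\R^2_+$-integrals and gives the second pair of displayed inequalities (the same argument applies to $\mathds{1}_{\theta\leq\lambda^k_t}\|D^k_{(t,\theta,x)}\boldsymbol F\|^2=\|D^k_{(t,\lambda^k_t,x)}\boldsymbol F\|^2$ in the remainder).

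The main obstacle is that both the integration by parts step and Lemma~\ref{lemma:taylor} require more regularity and integrability than the class $\mathcal F_C$ provides: Lemma~\ref{lemma:taylor} is stated for $\phi\in\mathcal C^3$ with bounded derivatives and the duality formula needs $\partial_i f(\boldsymbol F)$ to lie in the domain $\mathcal S$ of the divergence, whereas a generic $f\in\mathcal F_C$ is only $\mathcal C^2$. I therefore expect the delicate part to be a regularization argument: approximating $f$ by $\mathcal C^3$ functions with uniformly controlled $M_2$ and $M_3$, verifying the integrability conditions needed to justify the integration by parts, and passing to the limit, so that the term-by-term comparison above is licit for every test function in $\mathcal F_C$.
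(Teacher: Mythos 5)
Your proposal is correct and follows essentially the same route as the paper's proof: the Stein estimate over $\mathcal F_C$, the duality between $\delta^k$ and $D^k$ (which the paper makes rigorous by conditioning on the other Poisson measures via $\E_{\neq k}$ so as to invoke Picard's univariate formula), the Taylor splitting of Lemma~\ref{lemma:taylor} with the $M_2$ and $M_3$ bounds defining $\mathcal F_C$, and the integration in $\theta$ for the indicator case. The regularization you flag as the delicate point is resolved in the paper exactly as you anticipate, by the Gaussian smoothing $f_\eta(\boldsymbol x)=\E[f(\boldsymbol x+\sqrt{\eta}N)]$, which is $\mathcal C^\infty$, converges uniformly to $f$, and satisfies $M_2(f_\eta)\leq M_2(f)$ and $M_3(f_\eta)\leq M_3(f)$ by Young's convolution inequality.
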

\begin{proof}
We start by recalling the bound on the $d_2$ distance between $\boldsymbol F$ and $\boldsymbol{G}$
$$d_2(\boldsymbol{F},\boldsymbol{G})\leq \sup _{f\in \mathcal F_C} \big |\E\big[\langle C, \Hessian f(\boldsymbol F_T)\rangle _{H.S}- \langle \boldsymbol F,\nabla f(\boldsymbol F)\rangle\big]\big|.$$ 
In \cite{nourdin2010multivariate}, the following technique is used. For any $f\in \mathcal F_C$ 	and $\eta >0$ set $f_\eta(x)=\E[f(x+\sqrt \eta N)]$ where $N$ is a centered Gaussian of unit variance. It is easy to see that 
\begin{enumerate}
	\item $f_\eta \in \mathcal C^{\infty} (\R^n).$
	\item $\|f-f_\eta\|_{\infty}\longrightarrow 0$ when $\eta$ goes to zero.
	\item $M_2(f_\eta )\leq M_2(f)$ and $M_3(f_\eta)\leq M_3(f)$ using Young's convolution inequality.
\end{enumerate}
Thus we can assume that $f\in \mathcal C^{\infty}(\R^n)$ and the computations yield
\begin{align*}
\E\big[\langle C, \Hessian f(\boldsymbol F)\rangle _{H.S}- \langle \boldsymbol F,\nabla f(\boldsymbol F)\rangle\big] =& \E \big [\sum_{i,j=1}^n C_{ij} \partial^2_{ij}f(\boldsymbol{F}) -\sum_{i=1}^n F^i\partial_i f(\boldsymbol{F})\big],\\
=&\sum_{i,j=1}^n C_{ij} \E \big [\partial^2_{ij}f(\boldsymbol{F})\big] -\sum_{i=1}^n \E \big [F^i\partial_i f(\boldsymbol{F})\big].\\
\end{align*}
By definition, $\boldsymbol{ F}$ is defined as the divergence of the matrix $Z$, thus for each $i\in \llbracket 1,n\rrbracket $ 
$$F^i=\sum _{k=1}^d\delta^k (Z^{ki}),$$
which entails that
\begin{align*}
\E \big [F^i\partial_i f(\boldsymbol{F})\big]&=\sum_{k=1}^d \E \big [\delta^k (Z^{ki})\partial_i f(\boldsymbol{F})\big].
\end{align*}
Set $\E_{\neq k}[\cdot]=\E [\cdot | N^1,\cdots,N^{k-1},N^{k+1},\cdots,N^d]$, which stands for the expected value knowing all the counting measures except for the $k-$th one. This notation is introduced in order use the integration by parts formula in \cite{picard1996formules} which is available for univariate processes. Hence 
\begin{align*}
\E \big [F^i\partial_i f(\boldsymbol{F})\big]&= \sum_{k=1}^d \E \big [ \E_{\neq k}[\delta^k (Z^{ki})\partial_i f(\boldsymbol{F})]\big],\\
&=\sum_{k=1}^d \E \big [ \E_{\neq k}\left[\int \int\int_{\R^3_+}Z^{ki}_{(t,\theta,x)}D^k_{(t,\theta,x)}\partial_i f(\boldsymbol{F})\mathrm d t \mathrm \d \theta \nu^k(\mathrm \d x)\right ]\big],\\
&=\sum_{k=1}^d \E \left[ \int _{\R^3_+}Z^{ki}_{(t,\theta,x)}D^k_{(t,\theta,x)}\partial_i f(\boldsymbol{F})\mathrm d t \mathrm \d \theta \nu^k(\mathrm \d x)\right].\\
\end{align*}
Using Lemma \ref{lemma:taylor}, we have that 
\begin{align*}
D^k_{(t,\theta,x)}\partial_i f(\boldsymbol{F})&=\langle \nabla \partial_i f(\boldsymbol F),D^k_{(t,\theta,x)} \boldsymbol F\rangle + \frac{1}{2}\langle D^k_{(t,\theta,x)}\boldsymbol{F} , \Hessian \partial_i f(\bar{\boldsymbol{F}})D^k_{(t,\theta,x)}\boldsymbol{F} \rangle,\\
&=\sum_{j=1}^n\partial^2_{ij} f(\boldsymbol F)D^k_{(t,\theta,x)}  F^j + \frac{1}{2}\langle D^k_{(t,\theta,x)}\boldsymbol{F} , \Hessian \partial_i f(\bar{\boldsymbol{F}})D^k_{(t,\theta,x)}\boldsymbol{F} \rangle,\\
\end{align*}
for some random $\bar{\boldsymbol{F}}$. Hence
\begin{align*}
\E \big [F^i\partial_i f(\boldsymbol{F})\big]=&\sum_{k=1}^d \E \left [\int _{\R^3_+}Z^{ki}_{(t,\theta,x)} \sum_{j=1}^n\partial^2_{ij} f(\boldsymbol F)D^k_{(t,\theta,x)}  F^j \mathrm d t \mathrm \d \theta \nu^k(\mathrm \d x) \right ]\\ &+\sum_{k=1}^d\frac{1}{2} \E \left[ \int _{\R^3_+}Z^{ki}_{(t,\theta,x)} \langle D^k_{(t,\theta,x)}\boldsymbol{F} , \Hessian \partial_i f(\bar{\boldsymbol{F}})D^k_{(t,\theta,x)}\boldsymbol{F} \rangle\mathrm d t \mathrm \d \theta \nu^k(\mathrm \d x)\right],\\
=&\sum_{j=1}^n \E \left [\partial^2_{ij} f(\boldsymbol F)\sum_{k=1}^d\int _{\R^3_+}Z^{ki}_{(t,\theta,x)}D^k_{(t,\theta,x)}  F^j \mathrm d t \mathrm \d \theta \nu^k(\mathrm \d x) \right ]\\ &+\sum_{k=1}^d\frac{1}{2} \E \left[ \int _{\R^3_+}Z^{ki}_{(t,\theta,x)} \langle D^k_{(t,\theta,x)}\boldsymbol{F} , \Hessian \partial_i f(\bar{\boldsymbol{F}})D^k_{(t,\theta,x)}\boldsymbol{F} \rangle\mathrm d t \mathrm \d \theta \nu^k(\mathrm \d x)\right].
\end{align*}
Then
\begin{align*}
\E\big[\langle C, \Hessian f(\boldsymbol F)\rangle _{H.S}- \langle \boldsymbol F,\nabla f(\boldsymbol F)\rangle\big] =& \sum_{i,j=1}^n \E \left [\partial^2_{ij} f(\boldsymbol F)\left (C_{ij}-\sum_{k=1}^d\int _{\R^3_+}Z^{ki}_{(t,\theta,x)}D^k_{(t,\theta,x)}  F^j \mathrm d t \mathrm \d \theta \nu^k(\mathrm \d x) \right )\right ]\\
&-\frac{1}{2}\sum_{i=1}^n \sum_{k=1}^d \E \left[ \int _{\R^3_+}Z^{ki}_{(t,\theta,x)} \langle D^k_{(t,\theta,x)}\boldsymbol{F} , \Hessian \partial_i f(\bar{\boldsymbol{F}})D^k_{(t,\theta,x)}\boldsymbol{F} \rangle\mathrm d t \mathrm \d \theta \nu^k(\mathrm \d x)\right].
\end{align*}
By taking the absolute value, using the triangular inequality and the fact that for any $f\in \mathcal F_C$ and any $(i,j) \in \llbracket 1,d\rrbracket ^2$ $$\big \| \partial^2 _{ij} f\big \|_{\infty}\leq M_2(f)\leq \|C^{-1}\|_{op} \|C\|_{op}^{1/2}$$
we obtain the bound 
\begin{align*}
|\E\big[\langle C, \Hessian f(\boldsymbol F)\rangle _{H.S}- \langle \boldsymbol F,\nabla f(\boldsymbol F)\rangle\big]| \leq& \|C^{-1}\|_{op} \|C\|_{op}^{1/2}\sum_{i,j=1}^n \E \left [\left |C_{ij}-\sum_{k=1}^d\int _{\R^3_+}Z^{ki}_{(t,\theta,x)}D^k_{(t,\theta,x)}  F^j \mathrm d t \mathrm \d \theta \nu^k(\mathrm \d x) \right |\right ]\\
&+\frac{1}{2} \sum_{i,j=1}^n \sum_{k=1}^d \E \left[ \int _{\R^3_+}|Z^{ki}_{(t,\theta,x)}| \left |\langle D^k_{(t,\theta,x)}\boldsymbol{F} , \Hessian \partial_i f(\bar{\boldsymbol{F}})D^k_{(t,\theta,x)}\boldsymbol{F} \rangle\right |\mathrm d t \mathrm \d \theta \nu^k(\mathrm \d x)\right],\\
\leq& \|C^{-1}\|_{op} \|C\|_{op}^{1/2}\sum_{i=1}^n \E \left [\left |C_{ij}-\sum_{k=1}^d\int _{\R^3_+}Z^{ki}_{(t,\theta,x)}D^k_{(t,\theta,x)}  F^j \mathrm d t \mathrm \d \theta \nu^k(\mathrm \d x) \right |\right ]\\
&+\frac{\sqrt{2\pi}}{8}\|C^{-1}\|_{op}^{3/2} \|C\|_{op}  \sum_{i=1}^n \sum_{k=1}^d \E \left[ \int _{\R^3_+}|Z^{ki}_{(t,\theta,x)}| \left \| D^k_{(t,\theta,x)}  \boldsymbol{ F}\right \|^2\mathrm d t \mathrm \d \theta \nu^k(\mathrm \d x)\right],\\
\end{align*}
and finally 
\begin{align*}
d_2(\boldsymbol{ F},\boldsymbol{ G}) \leq & \|C^{-1}\|_{op} \|C\|_{op}^{1/2}\sum_{i,j=1}^n \E \left [\left |C_{ij}-\sum_{k=1}^d\int _{\R^3_+}Z^{ki}_{(t,\theta,x)}D^k_{(t,\theta,x)}  F^j \mathrm d t \mathrm \d \theta \nu^k(\mathrm \d x) \right |\right ]\\
&+\frac{\sqrt{2\pi}}{8}\|C^{-1}\|_{op}^{3/2} \|C\|_{op}  \sum_{i=1}^n \sum_{k=1}^d \E \left[ \int _{\R^3_+}|Z^{ki}_{(t,\theta,x)}| \left \| D^k_{(t,\theta,x)}  \boldsymbol{ F}\right \|^2\mathrm d t \mathrm \d \theta \nu^k(\mathrm \d x)\right].
\end{align*}
For the second point, assume that $\forall k \in \llbracket 1,d \rrbracket$ and $\forall i\in \llbracket 1,n \rrbracket$ 
$$Z^{ki}_{(t,\theta,x)}=\mathds 1_{\theta \leq \lambda^k_t}U^{ki}_{(t,x)}.$$
Keeping in mind that according to Definition \ref{lemma:TempDH}, we have
$$\mathds 1_{\theta \leq \lambda^k_t}D^k_{(t,\theta,x)}  F^j=D^k_{(t,\lambda^k_t,x)} F^j$$
whenever $\theta$ is less than $\lambda^k_t$, thus
\begin{align*}
d_2(\boldsymbol{ F},\boldsymbol{ G})\leq & \|C^{-1}\|_{op} \|C\|_{op}^{1/2}\sum_{i,j=1}^n \E \left [\left |C_{ij}-\sum_{k=1}^d\int _{\R^2_+}\lambda^k_tU^{ki}_{(t,x)}D^k_{(t,\lambda^k_t,x)}  F^j \mathrm d t \nu^k(\mathrm \d x) \right |\right ]\\
&+\frac{\sqrt{2\pi}}{8}\|C^{-1}\|_{op}^{3/2} \|C\|_{op}  \sum_{i=1}^n \sum_{k=1}^d \E \left[ \int _{\R^2_+}\lambda^k_t|U^{ki}_{(t,x)}| \left \| D^k_{(t,\lambda^k_t,x)}  \boldsymbol{ F}\right \|^2\mathrm d t  \nu^k(\mathrm \d x)\right].
\end{align*}
\end{proof}
\subsection{Bounds on the CLTs}
\label{section:CLT}
	The following theorem is our first main result where we give the bound on the $d_2$ distance between a vector formed by the normalized Hawkes martingale 
	$$\boldsymbol{F}_T=\frac{\boldsymbol{M}_T}{\sqrt T}=\frac{\boldsymbol{L}_T-\diag (m^1,\cdots,m^d)\int_0^T \boldsymbol{\lambda}_t \d t}{\sqrt T}$$
	and its Gaussian limit as $T$ goes to infinity.
	
	\begin{theorem}
		\label{th:multi-marginal}
		Fix $p$ and $d$ in $\mathbb N^*$. Let $(\boldsymbol{L}_t)_{t \geq 0}$ be a compound multivariate Hawkes process whose intensity $\boldsymbol \lambda$ follows the dynamics \ref{dynamics} and let $0<v_1<\cdots<v_p\leq 1$  be $p$ distinct positive numbers. Set $$\boldsymbol{ \Gamma}_T=\left (F^1_{v_1T},\cdots,F^1_{v_pT},\cdots, F^d_{v_1T},\cdots ,F^d_{v_pT} \right ) \in \R^{p\cdot d},$$
		and 
		$$\hat C = \diag (C^1,\cdots,C^d)\in \mathcal S_{p\cdot d}(\R)$$
		 the block diagonal matrix such that $\forall n \in \llbracket 1,d \rrbracket$
		 $$C^n_{ij}=C^n_{ji}=\int x^2 \nu^n(\d x) \sqrt {\frac{v_i}{v_j}}\left [ \left (B-A\diag (m^1,\cdots,m^d) \right )^{-1}B \boldsymbol{ \mu}\right ]^n, \quad \forall 1\leq i \leq j \leq p.$$
		 Let $\boldsymbol{ G} \sim \mathcal N (0,\hat C)$. Then there is a constant $K>0$ independent from $T$ such that 
		 $$d_2(\boldsymbol{ \Gamma}_T,\boldsymbol{ G})\leq \frac{K}{\sqrt T}.$$
	\end{theorem}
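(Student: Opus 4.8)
The plan is to realise every coordinate of $\boldsymbol\Gamma_T$ as a Poisson divergence and then feed the resulting matrix into Theorem \ref{th:Anthony}. The generic coordinate is $F^{j}_{v_\ell T}=M^{j}_{v_\ell T}/\sqrt{v_\ell T}$, and since the $\theta$-window has width $\lambda^{j}_t$ while $\int x\,\nu^{j}(\d x)=m^{j}$, a direct check gives $M^{j}_{v_\ell T}=L^{j}_{v_\ell T}-m^{j}\int_0^{v_\ell T}\lambda^j_s\,\d s=\delta^{j}\big(x\mathds{1}_{\{\theta\le\lambda^{j}_t\}}\mathds{1}_{\{t\le v_\ell T\}}\big)$. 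Hence $\boldsymbol\Gamma_T=\delta^{\boldsymbol N}(Z)$ for the matrix $Z\in\mathcal S^{d\times pd}$ whose column indexed by $(j,\ell)$ is $Z^{k}_{(t,\theta,x)}=\mathds{1}_{\{\theta\le\lambda^{k}_t\}}U^{k}_{(t,x)}$ with $U^{k}_{(t,x)}=\frac{x}{\sqrt{v_\ell T}}\mathds{1}_{\{t\le v_\ell T\}}\mathds{1}_{\{k=j\}}$; membership in $\mathcal S$ follows from the uniform moment bounds on $\boldsymbol\lambda$ discussed below. I then apply the second form of Theorem \ref{th:Anthony} with $C=\hat C$. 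As $\hat C$ depends only on the $v_i$, the measures $\nu^n$ and the model parameters, the prefactors $\|\hat C^{-1}\|_{op}$ and $\|\hat C\|_{op}$ are $T$-independent constants, so it suffices to bound the covariance sum and the remainder sum of Theorem \ref{th:Anthony} by $K/\sqrt T$.

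The crucial input is the shape of the Malliavin derivative. Proposition \ref{prop:DescDH} gives, for $t\le v_{\ell_2} T$ (and $0$ otherwise),
\[
D^{j_1}_{(t,\lambda^{j_1}_t,x)}M^{j_2}_{v_{\ell_2} T}=x\,\delta_{j_1 j_2}+\hat M^{j_2,j_1,t,x}_{v_{\ell_2} T},\qquad \hat M^{j_2,j_1,t,x}_{s}:=\hat L^{j_2,j_1,t,x}_{s}-m^{j_2}\!\int_t^{s}\hat\lambda^{j_2,j_1,t,x}_u\,\d u.
\]
Because the window $[\lambda^{j_2}_u,\lambda^{j_2}_u+\hat\lambda^{j_2,j_1,t,x}_u]$ driving $\hat L^{j_2,j_1,t,x}$ is disjoint from the one driving the original process, $s\mapsto\hat M^{j_2,j_1,t,x}_{s}$ is a martingale started at $0$ at time $t$, so $\E\big[\hat M^{j_2,j_1,t,x}_{v_{\ell_2} T}\mid\mathcal F_t\big]=0$. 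Inserting this decomposition into the covariance sum, the diagonal piece $x\,\delta_{j_1 j_2}$ (multiplied by $U$ and integrated in $x$) produces, on each diagonal block $j_1=j_2=n$, the quantity $\frac{\int x^2\nu^{n}(\d x)}{\sqrt{v_{\ell_1}v_{\ell_2}}}\,\frac1T\int_0^{\min(v_{\ell_1},v_{\ell_2})T}\lambda^{n}_t\,\d t$, the range shrinking to $\min(v_{\ell_1},v_{\ell_2})T$ since $D^{j_1}_{(t,\cdot,x)}M^{j_2}_{v_{\ell_2}T}=0$ for $t>v_{\ell_2}T$; off-diagonal blocks receive no diagonal contribution, matching the block structure of $\hat C$.

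It then remains to estimate two $L^1$-deviations. For the diagonal piece I split $\frac1T\int_0^{cT}\lambda^n_t\,\d t=\frac1T\int_0^{cT}(\lambda^n_t-\E[\lambda^n_t])\,\d t+\frac1T\int_0^{cT}\E[\lambda^n_t]\,\d t$: the deterministic part converges to $c\,\bar\lambda^n=c\,[(B-A\diag(m^1,\cdots,m^d))^{-1}B\boldsymbol\mu]^n$ at rate $O(1/T)$ because $\E[\lambda^n_t]\to\bar\lambda^n$ exponentially fast, so the diagonal piece reproduces $C^{n}_{\ell_1\ell_2}$ up to $O(1/T)$; the centered part is controlled in $L^1$ by Cauchy--Schwarz and a linear-in-$T$ bound on $\mathrm{Var}\big(\int_0^{cT}\lambda^n_t\,\d t\big)$, giving $O(1/\sqrt T)$. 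For the remainder sum, the normalisations $1/\sqrt{v_{\ell_1}T}$ and the $1/(v_{\ell''}T)$ hidden in $\|D^k\boldsymbol\Gamma_T\|^2$ yield an overall $T^{-3/2}$, the $t$-integral over $[0,T]$ restores one power of $T$, and a uniform-in-$(t,T)$ bound on $\E\big[\lambda^k_t\,|x|\,\|D^k_{(t,\lambda^k_t,x)}\boldsymbol M\|^2\big]$ closes the estimate at $O(1/\sqrt T)$; this is exactly where the finite third moment of Assumption \ref{as:third} enters, since $\|D^k\boldsymbol M\|^2$ carries an $x^2$ and $U$ an extra $|x|$.

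The main obstacle is the $\hat M$-part of the covariance sum, namely bounding in $L^1$ the mean-zero term $R_T=\frac{1}{T\sqrt{v_{\ell_1}v_{\ell_2}}}\int_0^{\min(v_{\ell_1},v_{\ell_2})T}\lambda^{j_1}_t\int x\,\hat M^{j_2,j_1,t,x}_{v_{\ell_2}T}\,\nu^{j_1}(\d x)\,\d t$. Since $\hat M^{j_2,j_1,t,x}_{v_{\ell_2}T}$ and $\hat M^{j_2,j_1,s,x'}_{v_{\ell_2}T}$ overlap on $(s\vee t,\,v_{\ell_2}T]$, expanding $\E[R_T^2]$ does not decouple by the martingale property alone; I would instead control it through the Poisson isometry $\E\big[(\hat M^{j_2,j_1,t,x}_{v_{\ell_2}T})^2\big]=\big(\int y^2\nu^{j_2}\big)\,\E\big[\int_t^{v_{\ell_2}T}\hat\lambda^{j_2,j_1,t,x}_u\,\d u\big]$ together with the linearity in $x$ and the exponential decay in $u-t$ of $\E[\hat\lambda^{j_2,j_1,t,x}_u]$. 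That decay is precisely what Assumptions \ref{as:stab} and \ref{as:vanish} provide: taking expectations in the SDE of Proposition \ref{prop:DescDH} yields the renewal equation $\E[\hat{\boldsymbol\lambda}^{j,t,x}_u]=xe^{-B(u-t)}\boldsymbol A_{\cdot j}+\int_t^u e^{-B(u-r)}A\diag(m^1,\cdots,m^d)\,\E[\hat{\boldsymbol\lambda}^{j,t,x}_r]\,\d r$, whose resolvent is summable under $\rho(B^{-1}A\diag(m^1,\cdots,m^d))<1$ and decays at the exponential rate fixed by the positive eigenvalues of $V=B-A\diag(m^1,\cdots,m^d)$. The same decay underlies the uniform moment bounds on $\lambda$, $\hat\lambda$ and $\hat L$ invoked above. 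Establishing these moment and decorrelation estimates is the bulk of the work; once they give $\E[R_T^2]=O(1/T)$, Cauchy--Schwarz yields $\E[|R_T|]=O(1/\sqrt T)$, and summing the finitely many coordinate contributions completes the proof with a $T$-independent constant $K$.
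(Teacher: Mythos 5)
Your proposal follows essentially the same route as the paper's own proof: the identical divergence representation $\boldsymbol{\Gamma}_T=\delta^{\boldsymbol N}(Z)$ with the same matrix $Z$, the same application of the second form of Theorem \ref{th:Anthony} with $C=\hat C$, the same decomposition of $D^{j_1}_{(t,\lambda^{j_1}_t,x)}\Gamma^j_T$ via Proposition \ref{prop:DescDH} into the $x\,\delta_{j_1j_2}$ part (matching $\hat C$ up to a deterministic $O(1/T)$ error and a centered-intensity $O(1/\sqrt T)$ error) plus the $\hat M$ part, and the same third-moment treatment of the remainder sum. The "main obstacle" you isolate is exactly the paper's Lemma \ref{lemma:A13}: there the overlap of the two hat-martingales started at times $s<t$ is resolved by the covariation identity $\E_t\big[\hat M^{j,i,t,x}\hat M^{j,i,s,x}\big]=\int y^2\nu^j(\d y)\int_t^{v_nT}\E_t\big[\min(\hat\lambda^{j,i,t,x}_u,\hat\lambda^{j,i,s,x}_u)\big]\d u$ combined with the exponential decay of $\E[\hat{\boldsymbol\lambda}]$ guaranteed by Assumptions \ref{as:stab} and \ref{as:vanish}, which yields $I_T\leq KT$, i.e.\ precisely your target $\E[R_T^2]=O(1/T)$, from the very ingredients you name.
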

\begin{proof}
			Throughout this proof, $K$ is a positive constant that does not depend on $T$ and that is susceptible to change from one line to the other. We also set $v_0=v_p$.\\
		    For each $i\in \llbracket 1,p\cdot d \rrbracket $ and $k\in \llbracket 1,d \rrbracket$, we define the matrix process 
		    $$Z^{k,i}_{(t,\theta,x)}= \mathds 1_{\theta \leq \lambda^k_t}\mathds 1_{k=i\div p+1}\mathds 1_{t\leq v_{i \% p}T} \frac{x}{\sqrt {v_{i\%p}T}},$$ 
		    where $i\%p$ (respectively $i\div p$) is the remainder (respectively the quotient) after dividing $i$ by $p$.
		    It is possible to write the set of integers from $1$ to $p\cdot d$ as a partition of $d$ disjoint intervals $S_1 \cup \cdots \cup S_d$ each of cardinal $p$. Thus for any $n=1,\cdots,d$, $n=i\div p+1$ if and only if $i\in S_n$.\\
		    We assume $i \in S_n$ and compute the $i-$th component of the divergence of $Z$ 
		    \begin{align*}
		    \left (\delta (Z) \right )^i &=\sum_{k=1}^d \delta ^k(Z^{ik}),\\
		    &=\delta^{n}(Z^{ni}),\\
		    &=\int_{\R_+^3} Z^{ni}_{(t,\theta,x)}\left (N^n(\d t ,\d \theta ,\d x) -\d t \d \theta  \nu^n(\d x)\right),\\
		    &=\frac{1}{\sqrt{v_{i\%p}T}}\int_0^{v_{i\%p}T} \int_{\R_+^2} x\mathds 1_{\theta \leq \lambda ^n_t}  \left (N^n(\d t ,\d \theta ,\d x) -\d t \d \theta  \nu^n(\d x)\right),\\
		    &=\Gamma^i_T.
		    \end{align*}
			Using the second equality of Theorem \ref{th:Anthony}, the $d_2$ distance is bounded by 
			\begin{align*}
			d_2(\boldsymbol{ \Gamma}_T,\boldsymbol{ G})\leq & \|\hat C^{-1}\|_{op} \|\hat C\|_{op}^{1/2}\sum_{i,j=1}^{pd} \E \left [\left |\hat C_{ij}-\sum_{k=1}^d\int _{\R^2_+}\lambda^k_t  \mathds 1_{t\leq v_{i\% p}T} \mathds 1_{k=i\div p+1}\frac{x}{\sqrt{v_{i\% p} T} }D^k_{(t,\lambda^k_t,x)}  \Gamma^j_T  \mathrm d t \nu^k(\mathrm \d x) \right |\right ]\\
			&+\frac{\sqrt{2\pi}}{8}\|\hat C^{-1}\|_{op}^{3/2} \|\hat C\|_{op}  \sum_{i=1}^{pd} \sum_{k=1}^d \E \left[ \int _{\R^2_+}\lambda^k_t  \mathds 1_{t\leq v_{i\% p} T} \mathds 1_{k=i\div p+1}\frac{x}{\sqrt {v_{i\% p} T} } \left \| D^k_{(t,\lambda^k_t,x)}  \boldsymbol{ \Gamma}_T\right \|^2\mathrm d t  \nu^k(\mathrm \d x)\right],\\
			\leq & \|\hat C^{-1}\|_{op} \|\hat C\|_{op}^{1/2}\sum_{n_1,n_2=1}^{d}\sum_{i \in S_{n_1}}\sum_{j \in S_{n_2}} \E \left [\left |\hat C_{ij}-\int _{\R_+}\int_0^{v_{i\%p}T}\lambda^{n_1}_t   \frac{x}{\sqrt{v_{i\% p} T} }D^{n_1}_{(t,\lambda^{n_1}_t,x)}  \Gamma^j_T  \mathrm d t \nu^{n_1}(\mathrm \d x) \right |\right ]\\
			&+\frac{\sqrt{2\pi}}{8}\|\hat C^{-1}\|_{op}^{3/2} \|\hat C\|_{op}  \sum_{n_1=1}^{d}\sum_{i\in S_{n_1}} \E \left[ \int _{\R_+}\int_0^{v_{i\% p} T}\lambda^{n_1}_t  \frac{x}{\sqrt {v_{i\% p} T} } \left \| D^{n_1}_{(t,\lambda^{n_1}_t,x)}  \boldsymbol{ \Gamma}_T\right \|^2\mathrm d t  \nu^{n_1}(\mathrm \d x)\right],\\
			\leq & \|\hat C^{-1}\|_{op} \|\hat C\|_{op}^{1/2}\sum_{n_1,n_2=1}^{d}\sum_{i \in S_{n_1}}\sum_{j \in S_{n_2}} \E \left [\left |\hat C_{ij}-\int _{\R_+}\int_0^{v_{i\%p}T}\lambda^{n_1}_t   \frac{x}{\sqrt{v_{i\% p} T} }D^{n_1}_{(t,\lambda^{n_1}_t,x)}  \Gamma^j_T  \mathrm d t \nu^{n_1}(\mathrm \d x) \right |\right ]\\
			&+\frac{\sqrt{2\pi}}{8}\|\hat C^{-1}\|_{op}^{3/2} \|\hat C\|_{op}  \sum_{n_1,n_2=1}^{d}\sum_{i\in S_{n_1}} \sum_{j \in S_{n_2}}\E \left[ \int _{\R_+}\int_0^{v_{i\% p} T}\lambda^{n_1}_t  \frac{x}{\sqrt {v_{i\% p} T} } \left | D^{n_1}_{(t,\lambda^{n_1}_t,x)}   \Gamma_T^j\right |^2\mathrm d t  \nu^{n_1}(\mathrm \d x)\right],\\
			\leq & K\sum_{n_1,n_2=1}^{d}\sum_{i\in S_{n_1}} \sum_{j \in S_{n_2}} \left ( A^{i,j}_1+ A^{i,j}_2 \right ),
			\end{align*}
			where 
			$$A^{i,j}_1=\E \left [\left |\hat C_{ij}-\int _{\R_+}\int_0^{v_{i\%p}T}\lambda^{n_1}_t   \frac{x}{\sqrt{v_{i\% p} T} }D^{n_1}_{(t,\lambda^{n_1}_t,x)}  \Gamma^j_T  \mathrm d t \nu^{n_1}(\mathrm \d x) \right |\right ],$$
			and 
			$$A^{i,j}_2=\E \left [ \int _{\R_+}\int_0^{v_{i\% p} T}\lambda^{n_1}_t  \frac{x}{\sqrt {v_{i\% p} T} } \left | D^{n_1}_{(t,\lambda^{n_1}_t,x)}   \Gamma_T^j\right |^2\mathrm d t  \nu^{n_1}(\mathrm \d x)\right ].$$
			In both terms we have dependence on $n_1$ and $n_2$ which are respectively functions of $i$ and $j$. For $i,j=1,\cdots,d$. we will treat each term separately.\\
			\textbf{Term $A^{i,j}_1$}\\\\
			First we start by computing the Malliavin's derivative in the $i-$th direction of $\Gamma^j_T$. By linearity of the derivative operator 
			\begin{align*}
			D^{n_1}_{(t,\lambda^{n_1}_t,x)}  \Gamma^j_T  &=D^{n_1}_{(t,\lambda^{n_1}_t,x)} F^{n_2}_{v_{j\%p}T},\\
			&=\frac{1}{\sqrt {v_{j\%p}T}} D^{n_1}_{(t,\lambda^{n_1}_t,x)}\left (L^{n_2}_{v_{j\%p}T}-m^{j\%p}\int_0^{v_{j\%p}T} \lambda^{n_2}_s \d s \right ),
			\end{align*}
				which yields using Proposition $\ref{prop:DescDH}$
				\begin{align*}
				D^{n_1}_{(t,\lambda^{n_1}_t,x)}  \Gamma^j_T &=\frac{\mathds 1_{t\leq v_{j\%p}T}}{\sqrt {v_{j\%p}T}} \left (x \mathds 1_{n_1=n_2}+\hat L^{n_2,n_1,t,x}_{v_{j\%p}T} -m^{j\%p}\int_t^{v_{j\%p}T} \hat \lambda^{n_2,n_1,t,x}_s \d s\right ),\\
				&= \frac{\mathds 1_{t\leq v_{j\%p}T}}{\sqrt {v_{j\%p}T}} \left (x \mathds 1_{n_1=n_2}+\hat M^{n_2,n_1,t,x}_{v_{j\%p}T}\right ).
				\end{align*}
				Thus
				\begin{align*}
				A^{i,j}_1=&\E \left [\left |\hat C_{ij}-\int _{\R_+}\int_0^{v_{i\%p}T}\lambda^{n_1}_t   \frac{x}{\sqrt{v_{i\% p} T} } \frac{\mathds 1_{t\leq v_{j\%p}T}}{\sqrt {v_{j\%p}T}} \left (x \mathds 1_{n_1=n_2}+\hat M^{n_2,n_1,t,x}_{v_{j\%p}T}\right )  \mathrm d t \nu^{n_1}(\mathrm \d x) \right |\right ],\\
				=&\E \left [\left |\hat C_{ij}-\frac{1}{\sqrt{v_{i\% p}v_{j\% p} } T} \int _{\R_+}\int_0^{(v_{i\%p}\wedge v_{j\%p})T}x\lambda^{n_1}_t    \left (x \mathds 1_{n_1=n_2}+\hat M^{n_2,n_1,t,x}_{v_{j\%p}T}\right )  \mathrm d t \nu^{n_1}(\mathrm \d x) \right |\right ],\\
				\leq&\E \left [\left |\hat C_{ij}-\frac{\mathds 1_{n_1=n_2}}{\sqrt{v_{i\% p}v_{j\% p} } T} \int _{\R_+}\int_0^{(v_{i\%p}\wedge v_{j\%p})T}x^2\lambda^{n_1}_t \mathrm d t \nu^{n_1}(\mathrm \d x) \right |\right ]\\ &+ \frac{1}{\sqrt{v_{i\% p}v_{j\% p} } T}\E \left [\left | \int _{\R_+}\int_0^{(v_{i\%p}\wedge v_{j\%p})T}x\lambda^{n_1}_t\hat M^{n_2,n_1,t,x}_{v_{j\%p}T} \mathrm d t \nu^{n_1}(\mathrm \d x) \right | \right ],\\
				\leq&\E \left [\left |\hat C_{ij}-\frac{\mathds 1_{n_1=n_2}}{\sqrt{v_{i\% p}v_{j\% p} } T} \int _{\R_+}\int_0^{(v_{i\%p}\wedge v_{j\%p})T}x^2\E[\lambda^{n_1}_t] \mathrm d t \nu^{n_1}(\mathrm \d x) \right |\right ]\\ 
				&+\frac{\mathds 1_{n_1=n_2}}{\sqrt{v_{i\% p}v_{j\% p} } T} \int _{\R_+}x^2 \nu^{n_1}(\d x)\E \left [\left | \int_0^{(v_{i\%p}\wedge v_{j\%p})T} \lambda^{n_1}_t-\E[\lambda^{n_1}_t] \d t \right |\right ] \\
				 &+\frac{1}{\sqrt{v_{i\% p}v_{j\% p} } T}\E \left [\left | \int _{\R_+}\int_0^{(v_{i\%p}\wedge v_{j\%p})T}x\lambda^{n_1}_t\hat M^{n_2,n_1,t,x}_{v_{j\%p}T} \mathrm d t \nu^{n_1}(\mathrm \d x) \right | \right ],\\
				 \leq & A^{i,j}_{1,1}+\mathds 1_{n_1=n_2} A^{i,j}_{1,2}+A^{i,j}_{1,3}.
				\end{align*}
				We start with the term $A^{i,j}_{1,1}$. If $i$ and $j$ are not in the same interval $S_{n_1}$ (\textit{i.e.} $n_1=n_2$), $\hat C_{ij}=0$, thus $A^{i,j}_{1,1}=0$. Note that due to symmetry arguments $i$ and $j$ are exchangeable, which allows us to assume that $i$ and $j$ are in the same interval and that $i\leq j$ which is equivalent to $i\%p \leq j\%p$. In this case, we have
				$$\hat C_{ij}=C^{n_1}_{i\%pj\%p}= \int_{\R_+}x^2 \nu^{n_1} (\d x) \sqrt {\frac{v_{i\%p}}{v_{j\%p}}} \left [ \left (B-A\diag (m^1,\cdots,m^d) \right )^{-1}B \boldsymbol{ \mu}\right ]^{n_1}.$$ 

According to Lemma \ref{lemma:expectation} it is possible to put the intensity's expectation under the form 
$$\E[\boldsymbol{\lambda}_t]=\left ( B -A\diag (m^1,\cdots,m^d)\right )^{-1}B\boldsymbol{\mu}+ Qe^{-tV}\boldsymbol{\mu},$$
where $Q$ and $V$ are matrices such that all the eigen-values of $V$ are positive. Thus 
\begin{align*}
\frac{1}{\sqrt{v_{i\% p}v_{j\% p} }T}\int_0^{v_{i\%p}T} \E[\boldsymbol{\lambda}_t] \d t &= \sqrt {\frac{v_{i\%p}}{v_{j\%p}}} \left ( B -A\diag (m^1,\cdots,m^d)\right )^{-1}B\boldsymbol{\mu}+\frac{1}{\sqrt{v_{i\% p}v_{j\% p} }T}Q \int_0^{v_{i\%p}T} e^{-Vt}\d t\boldsymbol{\mu},\\
&= \sqrt {\frac{v_{i\%p}}{v_{j\%p}}}\left ( B -A\diag (m^1,\cdots,m^d)\right )^{-1}B\boldsymbol{\mu}+\frac{1}{\sqrt{v_{i\% p}v_{j\% p} }T}Q V ^{-1}\left (I_d-e^{-VTv_{i\%p}} \right )\boldsymbol{\mu},\\
\end{align*}
	which means that
\begin{align*}
A^{i,j}_{1,1} =& \E \left [  \left |\hat C_{ij}-\frac{1}{\sqrt{v_{i\% p}v_{j\% p} } T}\int_{{\R_+}}x^2 \nu^{n_1}(\d x) \int_0^{v_{i\%p}T} \E[\lambda^{n_1}_t] \d t \right | \right ],\\
\leq& \E \left[\left |\hat C_{i j}-  \int_{{\R_+}}x^2 \nu^{n_1}(\d x)\left[\big(B-A\diag(m^1,\cdots,m^d)\big)^{-1}B\boldsymbol{\mu}\right]^{n_1}\right | \right ] \\ &+\E \left [ \left |\int_{{\R_+}}x^2 \nu^{n_1}(\d x)\frac{1}{\sqrt{v_{i\% p}v_{j\% p} }T}\left [Q  V ^{-1}\left (I_d-e^{-VTv_{i\%p}} \right )\boldsymbol{\mu}\right ]^{n_1} \right | \right ],\\
=&\int_{{\R_+}}x^2 \nu^{n_1}(\d x)\frac{1}{\sqrt{v_{i\% p}v_{j\% p} } T}\E \left[\left | \left [Q V ^{-1}\left (I_d-e^{-VTv_{i\%p}} \right )\boldsymbol{\mu}\right ]^{n_1} \right | \right ],\\
=&O\left(\frac{1}{T}\right) \quad \text {because } e^{-VTv_{i\%p}} \longrightarrow 0.
\end{align*}
 For the term $A^{i,j}_{1,2}$ we use the second equality of Lemma \ref{A12} and the fact that for a given vector $\boldsymbol{ x}$, $\|\boldsymbol{x}\|_{\infty}\leq \|\boldsymbol{x}\|$ where $\|.\|$ is the euclidean norm.
\begin{align*}
\E \left [\left \|\int_0^{v_{i\%p}T} \boldsymbol{ \lambda}_t-\E[\boldsymbol{ \lambda}_t] \d t \right \|\right ] &= \E \left [\left \|\int_0^{v_{i\%p}T}e^{-\big( B-A \diag (m^1,\cdots,m^d)\big) ({v_{i\%p}T}-s)} A\boldsymbol{ M}_s \d s \right \|\right ],\\
&\leq \E \left [\int_0^{v_{i\%p}T} \left \|e^{-\big( B-A \diag (m^1,\cdots,m^d)\big) ({v_{i\%p}T}-s)} A\boldsymbol{ M}_s\right \| \d s \right ],\\
&\leq \int_0^{v_{i\%p}T} \left \|e^{-\big( B-A \diag (m^1,\cdots,m^d)\big) ({v_{i\%p}T}-s)} A\right \|_{op}\E \left [ \|\boldsymbol{ M}_s\|\right ] \d s ,\\
&\leq \int_0^{v_{i\%p}T} \left \|e^{-\big( B-A \diag (m^1,\cdots,m^d)\big) ({v_{i\%p}T}-s)} A\right \|_{op}\E \left [ \|\boldsymbol{ M}_s\|^2\right ]^{1/2} \d s ,\\
\end{align*}
where the last inequality comes from Cauchy-Schwarz.\\
Keeping in mind Ito's isometry we have
\begin{align*}
\E \left [ \|\boldsymbol{ M}_s\|^2\right ] &= \E \left [\sum_{i=1}^d \left |M^i_s\right |^2\right ],\\
&=\sum_{i=1}^d \E \left [\left |M^i_s\right |^2\right ],\\
&=\sum_{i=1}^d \E \left [\left [M^i\right ]_s\right ], \quad \text{ where } \left [M^i\right ]_s ~ \text{is the quadratic variation } \\
&=\sum_{i=1}^d \int_{{\R_+}}x^2 \nu^i(\d x) \int_0^s\E \left [\lambda^i_u\right ]\d u,\\
&\leq K (s+1),
\end{align*}
which implies that $\E \left [ \|\boldsymbol{ M}_s\|^2\right ]^{1/2} \leq K (\sqrt s +1).$\\
Using the fact that the operator norm is sub-multiplicative and Lemma \ref{lemma:Jordan} we have that
\begin{align*}
\left \|e^{-\big( B-A \diag (m^1,\cdots,m^d)\big) ({v_{i\%p}T}-s)} A\right \|_{op}\leq& \left \|e^{-\big( B-A \diag (m^1,\cdots,m^d)\big) ({v_{i\%p}T}-s)} \right \| _{op}\left \| A\right \|_{op},\\
\leq & K \left (1+({v_{i\%p}T}-s)^{d-1} \right )e^{-\rho_d({v_{i\%p}T}-s)}.\\
\end{align*}
		Combining these inequalities yields 
\begin{align*}
A^{i,j}_{1,2} &=\frac{1}{\sqrt{v_{i\% p}v_{j\% p} }T}\E \left [\left | \int_0^{v_{i\%p}T} \lambda^i_t -\E[\lambda^i_t] \d t \right| \right],\\
&\leq \frac{1}{\sqrt{v_{i\% p}v_{j\% p} }T} \E \left [\left \|\int_0^{v_{i\%p}T} \boldsymbol{ \lambda}_t-\E[\boldsymbol{ \lambda}_t] \d t \right \|\right ],\\
&\leq  \frac{K}{\sqrt{v_{i\% p}v_{j\% p} }T} \int_0^{v_{i\%p}T} e^{-\rho_d({v_{i\%p}T}-s)}\left (1+({v_{i\%p}T}-s)^{d-1} \right )(\sqrt s +1)\d s, \\
&\leq O\left ( \frac{1}{\sqrt T}\right) , ~ \text {\textit{cf.} the bound on $A_{1,2}$ in \cite{hillairet2021malliavinstein}}.
\end{align*}
For the term $A^{i,j}_{1,3}$ we start by noticing that the integral with respect to $\nu^i(\d x)$ is equivalent to taking the expectation of a random variable $X$ of law $\nu^i$. Hence
\begin{align*}
A^{i,j}_{1,3}&=\E \left [ \left | \frac{1}{\sqrt{v_{i\% p}v_{j\% p} }T} \int_0^{v_{i\% p}T} \int_{{\R_+}}x \lambda^{n_1}_t \hat M ^{n_2,n_1,t,x}_{v_{j\% p}T}  \nu^i(\d x) \d t \right | \right ],\\
&=\frac{1}{\sqrt{v_{i\% p}v_{j\% p} }T} \E \left [ \left | \E_{X} \left[\int_0^{v_{i\% p}T} X \lambda^{n_1}_t \hat M ^{n_2,n_1,t,x}_{v_{j\% p}T}   \d t\right ] \right | \right ],\\
&\leq \frac{1}{\sqrt{v_{i\% p}v_{j\% p} }T} \E \left [  \E_{X} \left[\int_0^{v_{i\% p}T} X \lambda^{n_1}_t \hat M ^{n_2,n_1,t,x}_{v_{j\% p}T}   \d t\right ] ^2 \right ]^{1/2},\\
\end{align*}
and by Jensen's inequality
\begin{align*}
A^{i,j}_{1,3} & \leq \frac{1}{\sqrt{v_{i\% p}v_{j\% p} }T} \E \left [  \E_{X} \left[ \left (\int_0^{v_{i\% p}T} X \lambda^{n_1}_t \hat M ^{n_2,n_1,t,X}_{v_{j\% p}T}   \d t\right )^2\right ]  \right ]^{1/2},\\
&\leq \frac{1}{\sqrt {v_{i\% p}v_{j\% p}T}} \left (\frac{1}{T} I_T\right )^{1/2},
\end{align*}
where 
\begin{equation}
\label{eq:IT}
I_T:= \E \left [  \E_{X} \left[ \left (\int_0^{v_{i\% p}T} X \lambda^{n_1}_t \hat M ^{n_2,n_1,t,X}_{v_{j\% p}T}  \d t\right )^2\right ]  \right ].
\end{equation}
Using lemma \ref{lemma:A13} we have that 
$$I_T \leq KT,$$
and therefore 
$$A^{i,j}_{1,3} \leq O \left ( \frac{1}{\sqrt T }\right ).$$
This shows that \begin{equation} \label{eq:A1}A_1^{i,j}= O \left ( \frac{1}{\sqrt T}\right ).\end{equation}\\
\textbf{Term $A^{i,j}_2$}\\\\
Keeping in mind that 
$$A^{i,j}_2=\E \left [ \int _{\R_+}\int_0^{v_{i\% p} T}\lambda^{n_1}_t  \frac{x}{\sqrt {v_{i\% p} T} } \left | D^{n_1}_{(t,\lambda^{n_1}_t,x)}   \Gamma_T^j\right |^2\mathrm d t  \nu^{n_1}(\mathrm \d x)\right ],$$
and that 
\begin{align*}
D^{n_1}_{(t,\lambda^{n_1}_t,x)}  \Gamma^j_T &
=\frac{\mathds 1_{t\leq v_{j\%p}T}}{\sqrt {v_{j\%p}T}} \left (x \mathds 1_{n_1=n_2}+\hat M^{n_2,n_1,t,x}_{v_{j\%p}T}\right ),
\end{align*}
we have 
\begin{align*}
A^{i,j}_2&=\frac{1}{\sqrt {v_{i\%p}}v_{j\%p}T^{3/2}}\E \left [ \int_0^{v_{i\%p}T\wedge v_{j\%p}T} \int_{{\R_+}}x\lambda^{n_1}_t \left (x\mathds 1_{n_1=n_2}+ \hat M ^{n_2,n_1,t,x}_{v_{j\%p}T} \right)^2 \nu^{n_1}(\d x) \d t  \right ],\\
&\leq \frac{2}{\sqrt {v_{i\%p}}v_{j\%p}T^{3/2}}\E \left [ \int_0^{v_{i\%p}T} \int_{{\R_+}} x^3\lambda^{n_1}_t  \mathds 1_{n_1=n_2}+ x\lambda^{n_1}_t\left | \hat M ^{n_2,n_1,t,x}_{v_{j\%p}T} \right |^2 \nu^{n_1}(\d x) \d t  \right ],\\
&=\frac{2K\cdot \mathds 1_{n_1=n_2}}{T^{3/2}} \int_{\R_+} x^3 \nu^{n_1}(\d x) \int_0^{v_{i\%p}T} \E [\lambda^{n_1}_t] \d t+ \frac{2K}{T^{3/2}}\int_0^{v_{i\%p}T} \int_{{\R_+}} x\E \left [\lambda^{n_1}_t\left | \hat M ^{n_2,n_1,t,x}_{v_{j\%p}T} \right |^2 \right] \nu^{n_1}(\d x) \d t,\\
&:= A^{i}_{2,1}+A^{i,j}_{2,2}.
\end{align*}
According to Lemma \ref{lemma:expectation} and to the fact that $\big( B-A \diag (m^1,\cdots,m^d)\big)$ has only positive eigenvalues we have that $\int_0^{v_{i\%p}T} \E [\lambda^i_t] \d t \leq KT$ and hence
$$A^i_{2,1}=O \left (\frac{1}{\sqrt T} \right ).$$
For the final term it holds that 
\begin{align*}
A^{i,j}_{2,2} &=\frac{2K}{T^{3/2}}\int_0^{v_{i\%p}T} \int_{{\R_+}} x\E \left [\lambda^{n_1}_t\left | \hat M ^{n_2,n_1,t,x}_{v_{j\%p}T} \right |^2 \right] \nu^{n_1}(\d x) \d t,\\
&=\frac{K}{T^{3/2}}\int_0^{v_{i\%p}T} \int_{{\R_+}} x\E \left [\lambda^{n_1}_t\E_t \left [\left |  \hat M ^{n_2,n_1,t,x}_{v_{j\%p}T} \right |^2\right] \right] \nu^{n_1}(\d x) \d t,\\
&=\frac{K}{T^{3/2}} \int_0^{v_{i\%p}T} \int_{{\R_+}} x\E \left [\lambda^{n_1}_t\E_t \left [\int_t^{v_{j\%p}T} \hat \lambda ^{n_2,n_1,t,x}_s ds\right] \right] \nu^{n_1}(\d x) \d t.\\
\end{align*}
Solving the SDE \ref{eq:SDEtilde} with initial condition $\E_t\left [ \tilde {\boldsymbol{ \lambda}}^{n_1,t,x}_t\right ]=x\boldsymbol{ A}_{\cdot i}$ we have that $$\E_t\left [ \tilde {{ \lambda}}^{n_2,n_1,t,x}_s\right ]=x \left (e^{-\big( B-A \diag (m^1,\cdots,m^d)\big)(s-t)}\boldsymbol{ A}_{\cdot n_1}\right )^{n_2},$$
which is integrable. It follows that
\begin{align*}
A^{i,j}_{2,2} &\leq  \frac{K}{T^{3/2}} \int_0^{v_{i\%p}T} \int_{\R_+} x^2 \E\left [\lambda^{n_1}_t\right ] \nu^{n_1}(\d x) \d t,\\
&\leq  \frac{K}{T^{3/2}} \int_{\R_+} x^2\nu^{n_1}(\d x) Kv_{i\%p}T,\\
&\leq O \left (\frac{1}{\sqrt T} \right).
\end{align*}
And finally \begin{equation} \label{eq:A2} A^{i,j}_{2}=O\left (\frac{1}{\sqrt T} \right ). \end{equation}
Combining \ref{eq:A1} and \ref{eq:A2} we conclude that 
$$d_2(\boldsymbol{ \Gamma}_T,\boldsymbol{ G})=O\left ( \frac{1}{\sqrt T}\right ).$$
\end{proof}
	\begin{corollary}
		\label{th:main}
		Fix $d \in \N^*$. Let $(\boldsymbol{L}_t)_{t \geq 0}$ be a compound multivariate Hawkes process whose intensity $\boldsymbol \lambda$ follows the dynamics \ref{dynamics}. Assume Assumptions \ref{as:stab}, \ref{as:vanish} and  \ref{as:third} are in force.\\
		Set $C=\diag(\sigma^2_1,\cdots,\sigma^2_d)$ where for any $j=1,\cdots,d$
		$$\sigma^2_j= \int x^2 \nu^j(\mathrm d x) \left[\big(B-A\diag(m^1,\cdots,m^d)\big)^{-1}B\boldsymbol{\mu}\right]^j$$
		and let $\boldsymbol{G} \sim \mathcal N(0,C)$.\\
		Then there exists a constant $K>0$ independent from $T$ such that 
		$$d_2(\boldsymbol{F}_T,\boldsymbol{G})\leq \frac{K}{\sqrt T}.$$
	\end{corollary}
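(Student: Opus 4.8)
The plan is to obtain this statement as the single-marginal special case of Theorem \ref{th:multi-marginal}. Concretely, I would apply that theorem with $p = 1$ and $v_1 = 1$, and then check that every object appearing in its conclusion collapses to the one defined in the present statement. In other words, all the analytic work has already been carried out in the multi-marginal theorem (which itself rests on the general bound of Theorem \ref{th:Anthony}), and the corollary is a matter of matching notation.

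First, I would observe that with $p = 1$ and $v_1 = 1$ the vector $\boldsymbol{\Gamma}_T = (F^1_{v_1 T},\ldots,F^d_{v_1 T})$ becomes $(F^1_T,\ldots,F^d_T) = \boldsymbol{F}_T$, since $\boldsymbol{F}_T = \boldsymbol{M}_T/\sqrt{T}$ has exactly these components. Hence the left-hand side $d_2(\boldsymbol{\Gamma}_T,\boldsymbol{G})$ is precisely $d_2(\boldsymbol{F}_T,\boldsymbol{G})$, the quantity to be estimated.

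Second, I would verify that the limiting covariance matches. The block-diagonal matrix $\hat C = \diag(C^1,\ldots,C^d)$ of Theorem \ref{th:multi-marginal} has, when $p = 1$, each block $C^n$ reduced to the single scalar
$$C^n_{11} = \int x^2 \nu^n(\d x)\, \sqrt{\frac{v_1}{v_1}}\, \left[\big(B - A\diag(m^1,\ldots,m^d)\big)^{-1} B\boldsymbol{\mu}\right]^n = \sigma_n^2,$$
so that $\hat C = \diag(\sigma_1^2,\ldots,\sigma_d^2) = C$, and therefore $\boldsymbol{G} \sim \mathcal{N}(0,C)$ is exactly the Gaussian target of the corollary. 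The three hypotheses (Assumptions \ref{as:stab}, \ref{as:vanish}, \ref{as:third}) are precisely those under which Theorem \ref{th:multi-marginal} was established, so they are in force.

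With these identifications, the bound $d_2(\boldsymbol{\Gamma}_T,\boldsymbol{G}) \le K/\sqrt{T}$ supplied by Theorem \ref{th:multi-marginal} reads verbatim as $d_2(\boldsymbol{F}_T,\boldsymbol{G}) \le K/\sqrt{T}$, which is the claim. I do not expect any genuine obstacle here: the substance lies entirely in the multi-marginal theorem, and the only point to confirm is the cosmetic consistency of the covariance and of the normalizing constants when $p = 1$, the constant $K$ being inherited unchanged.
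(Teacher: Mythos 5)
Your proposal is correct and follows exactly the paper's own route: the paper likewise proves this corollary by applying Theorem \ref{th:multi-marginal} with $p=1$ and $v_1=1$, and your verification that $\boldsymbol{\Gamma}_T$ collapses to $\boldsymbol{F}_T$ and that $\hat C$ reduces to $C=\diag(\sigma_1^2,\ldots,\sigma_d^2)$ is the (implicit) notational check the paper leaves to the reader.
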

	\begin{proof}
		
		This is merely an application of Theorem \ref{th:multi-marginal} for $p=1$ and $v_1=1$.
	\end{proof}
	\begin{remark}
		\label{remark:coeff}
		The specific bounds $\|g\|_{Lip}\leq 1$ and $M_2(g)\leq 1$ could have been relaxed to $\|g\|_{Lip}\leq K_1$ and $M_2(g)\leq K_2$ where $K_1$ and $K_2$ are two arbitrary positive constants.
	\end{remark}
	As a final result in this section, we consider the slightly modified process 
	$$\boldsymbol{ Y}_T = \frac{\boldsymbol{ L}_T -\diag (m^1,\cdots,m^d)\int_0^T \E [\boldsymbol{ \lambda}_s]\d s}{\sqrt T}$$
	and we study its behaviour as $T$ goes to infinity.\\
	\begin{theorem}
		\label{th:Y}
		Set $\tilde C = \left (J \sqrt C \right ) \ ^t \left (J \sqrt C \right )=JC\ ^tJ$, where $J= \left ( I_d-\diag(m^1,\cdots,m^d)B^{-1}A \right )^{-1}$ and $C$ is defined in Corollary \ref{th:main}.\\
		Let $\tilde {\boldsymbol{ G}}\sim \mathcal N \left (0, \tilde C \right )$. There exists a constant $K$ that does not depend on $T$ such that 
		$$d_2(\boldsymbol{ Y}_T,\tilde {\boldsymbol{ G}})\leq \frac{K}{\sqrt T},$$
		for any $T>0$.
	\end{theorem}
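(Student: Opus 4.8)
The plan is to reduce Theorem \ref{th:Y} to Corollary \ref{th:main} by showing that $\boldsymbol{Y}_T$ coincides with $J\boldsymbol{F}_T$ up to a remainder that vanishes in $L^1$ at speed $1/\sqrt T$, and then transferring the known $d_2$ rate through the linear map $J$. Writing $\boldsymbol{L}_T=\boldsymbol{M}_T+\diag(m^1,\cdots,m^d)\int_0^T\boldsymbol{\lambda}_t\,\d t$ and subtracting the deterministic drift gives the elementary identity
$$\boldsymbol{Y}_T=\boldsymbol{F}_T+\frac{1}{\sqrt T}\diag(m^1,\cdots,m^d)\int_0^T\big(\boldsymbol{\lambda}_t-\E[\boldsymbol{\lambda}_t]\big)\,\d t.$$
The first step is to linearise the integrated intensity fluctuation in terms of the martingale $\boldsymbol{M}$. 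Starting from the representation of Lemma \ref{A12} and applying a stochastic Fubini argument, I would recast it in It\^o form as
$$\int_0^T\big(\boldsymbol{\lambda}_t-\E[\boldsymbol{\lambda}_t]\big)\,\d t=\int_0^T V^{-1}\big(I_d-e^{-V(T-s)}\big)A\,\d\boldsymbol{M}_s=V^{-1}A\,\boldsymbol{M}_T-\int_0^T V^{-1}e^{-V(T-s)}A\,\d\boldsymbol{M}_s,$$
where $V=B-A\diag(m^1,\cdots,m^d)$ and $\int_0^T\d\boldsymbol{M}_s=\boldsymbol{M}_T$.

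Substituting this into the previous display yields the decomposition $\boldsymbol{Y}_T=\big(I_d+\diag(m^1,\cdots,m^d)V^{-1}A\big)\boldsymbol{F}_T-\boldsymbol{R}_T$, with remainder
$$\boldsymbol{R}_T:=\frac{1}{\sqrt T}\diag(m^1,\cdots,m^d)V^{-1}\int_0^T e^{-V(T-s)}A\,\d\boldsymbol{M}_s.$$
The algebraic heart of the argument is then to prove the identity $I_d+\diag(m^1,\cdots,m^d)V^{-1}A=J$. Using $V^{-1}=(I_d-B^{-1}A\diag(m^1,\cdots,m^d))^{-1}B^{-1}$ and expanding $\diag(m^1,\cdots,m^d)V^{-1}A$ as a Neumann series in $\diag(m^1,\cdots,m^d)B^{-1}A$, I would recognise the series as $(I_d-\diag(m^1,\cdots,m^d)B^{-1}A)^{-1}-I_d=J-I_d$, which gives exactly $\boldsymbol{Y}_T=J\boldsymbol{F}_T-\boldsymbol{R}_T$.

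It remains to bound $\boldsymbol{R}_T$ and to propagate the estimate of Corollary \ref{th:main}. For the remainder I would use that the components $M^k$ are driven by independent Poisson measures, so $\langle M^k,M^l\rangle=0$ for $k\neq l$ and $\d\langle M^k\rangle_t=(\int x^2\nu^k(\d x))\lambda^k_t\,\d t$; It\^o isometry then gives $\E[\|\boldsymbol{R}_T\|^2]=\frac1T\sum_{i,k}\int_0^T(\diag(m^1,\cdots,m^d)V^{-1}e^{-V(T-s)}A)_{ik}^2(\int x^2\nu^k(\d x))\E[\lambda^k_s]\,\d s$. Since $\E[\lambda^k_s]$ is bounded uniformly in $s$ by Lemma \ref{lemma:expectation} (under Assumption \ref{as:vanish}) and $\|e^{-V(T-s)}\|_{op}\leq K(1+(T-s)^{d-1})e^{-\rho_d(T-s)}$ by Lemma \ref{lemma:Jordan}, the integral converges as $T\to\infty$, whence $\E[\|\boldsymbol{R}_T\|^2]=O(1/T)$ and $\E[\|\boldsymbol{R}_T\|]=O(1/\sqrt T)$. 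As every $g\in\mathcal H$ is $1$-Lipschitz this yields $d_2(\boldsymbol{Y}_T,J\boldsymbol{F}_T)\leq\E[\|\boldsymbol{R}_T\|]=O(1/\sqrt T)$. To transfer through $J$, note that for $g\in\mathcal H$ the chain rule gives $\|g\circ J\|_{Lip}\leq\|J\|_{op}$ and $M_2(g\circ J)\leq\|J\|_{op}^2$, so by Remark \ref{remark:coeff} and Corollary \ref{th:main}, $d_2(J\boldsymbol{F}_T,J\boldsymbol{G})\leq\max(\|J\|_{op},\|J\|_{op}^2)\,d_2(\boldsymbol{F}_T,\boldsymbol{G})=O(1/\sqrt T)$ for $\boldsymbol{G}\sim\mathcal N(0,C)$; since $J\boldsymbol{G}\sim\mathcal N(0,JCJ^\top)=\mathcal N(0,\tilde C)$ has the law of $\tilde{\boldsymbol{G}}$, the triangle inequality for $d_2$ closes the proof.

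The main obstacle is ensuring the remainder is genuinely $O(1/\sqrt T)$: the prefactor $1/\sqrt T$ must beat the $O(\sqrt T)$ growth of $\E[\|\boldsymbol{M}_T\|]$, and this works precisely because the kernel $e^{-V(T-s)}$ concentrates the mass of the stochastic integral near $s=T$, keeping its variance bounded uniformly in $T$, in contrast with the $O(1)$ leading term $J\boldsymbol{F}_T$. The secondary technical point to handle carefully is the non-commutative Neumann-series identity $I_d+\diag(m^1,\cdots,m^d)V^{-1}A=J$, which is what produces the correct limiting covariance $\tilde C=JCJ^\top$ rather than $C$.
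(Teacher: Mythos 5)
Your proposal is correct and takes essentially the same approach as the paper: your decomposition $\boldsymbol{Y}_T=J\boldsymbol{F}_T-\boldsymbol{R}_T$ is exactly the paper's Lemma \ref{lemma:Y} (by the first identity of Lemma \ref{A12}, your stochastic-integral remainder coincides with the paper's term $\diag (m^1,\cdots,m^d)\left (B-A\diag (m^1,\cdots,m^d)\right )^{-1}\left (\boldsymbol{\lambda}_T-\E[\boldsymbol{\lambda}_T]\right )/\sqrt T$, which the paper derives by integrating the two SDEs rather than by stochastic Fubini plus the push-through identity), and your It\^o-isometry bound $\E\left [\|\boldsymbol{R}_T\|^2\right ]=O(1/T)$ is precisely the content and the proof of the paper's Lemma \ref{lemma:second}. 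The final transfer of Corollary \ref{th:main} through $J$, by composing test functions with $J$ and invoking Remark \ref{remark:coeff}, is also the paper's argument, with your Lipschitz/triangle-inequality treatment of the remainder replacing the paper's equivalent Taylor-expansion step.
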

	\begin{proof}
		Thanks to Lemma \ref{lemma:Y} we have that $$J^{-1}\boldsymbol{ Y}_T=\boldsymbol{ F}_T+J^{-1}\boldsymbol{ R}_T.$$
		Let $f\in\mathcal H$ and set $f_J:\boldsymbol{x} \mapsto \frac{f(J\boldsymbol{x})} {\|J\|_{op}}$, this function is clearly in $\mathcal C ^2 (\R^d)$ and $\|f_J\|_{Lip}\leq 1$ and $M_2(f_J)\leq \|J\|_{op}$.\\
		Let $\tilde {\boldsymbol {G}}\sim \mathcal N(0,\tilde C)$. We have
		\begin{align*}
		\left |\E \left [f(\boldsymbol Y_T)\right ]-\E  [f(\tilde{\boldsymbol G}) ] \right |&=\|J\|_{op}\left |\E \left [f_J(J^{-1}\boldsymbol Y_T)\right ]-\E  \big [f_J(J^{-1}\tilde{\boldsymbol G}) \big] \right |,\\
		&=\|J\|_{op}\left |\E \left [f_J(\boldsymbol{ F}_T+J^{-1}\boldsymbol{ R}_T)\right ]-\E \left [f_J({\boldsymbol G}) \right ] \right |,\\
		\end{align*} 
		where $\boldsymbol G=J^{-1}\tilde{\boldsymbol G} \sim \mathcal N(0,C)$.\\ Using a Taylor expansion, there exists a random $\boldsymbol Y^*$ such that 
		$$f_J(\boldsymbol{ F}_T+J^{-1}\boldsymbol{ R}_T)= f_J(\boldsymbol{ F}_T)+\left <\nabla f_J(\boldsymbol Y^*), J^{-1}\boldsymbol R_T\right >,$$
		hence 
		\begin{align*}
		\left |\E \left [f(\boldsymbol Y_T)\right ]-\E  [f(\tilde{\boldsymbol G}) ] \right |&=\|J\|_{op}\left |\E \left [f_J(\boldsymbol{ F}_T) \right ]-\E \left [f_J({\boldsymbol G}) \right ]+\E \left [ \left <\nabla f_J(\boldsymbol Y^*), J^{-1}\boldsymbol R_T\right > \right ]\right |,\\&\leq \|J\|_{op}\left |\E \left [f_J(\boldsymbol{ F}_T) \right ]-\E \left [f_J({\boldsymbol G}) \right ]\right |+\|J\|_{op} \left |\E \left [ \left <\nabla f_J(\boldsymbol Y^*), J^{-1}\boldsymbol R_T\right > \right ]\right |,\\
		&\leq \|J\|_{op}\sup_{g \in \mathcal H_J}\left |\E \left [g(\boldsymbol{ F}_T) \right ]-\E \left [g({\boldsymbol G}) \right ]\right |+\|J\|_{op} \left |\E \left [ \left <\nabla f_J(\boldsymbol Y^*), J^{-1}\boldsymbol R_T\right > \right ]\right |,\\
		\end{align*}
		where $\mathcal H_J=\{f \in \mathcal C ^2(\R^d) \text { such that } \|f\|_{Lip}\leq 1 \text{ and } M_2(f)\leq \|J\|_{op}\}$. Using Cauchy-Schwarz's inequality (twice) the second term is bounded as follows 
		\begin{align*}
		\left |\E \left [ \left <\nabla f_J(\boldsymbol Y^*), J^{-1}\boldsymbol R_T\right > \right ] \right |&\leq \left |\E \left [ \left \|\nabla f_J(\boldsymbol Y^*) \right \| \left \| J^{-1}\boldsymbol R_T\right \| \right ] \right |,\\
		&\leq \left |\E \left [ \left \|\nabla f_J(\boldsymbol Y^*) \right \|^2 \right ]^{1/2} \E \left [ \left \| J^{-1}\boldsymbol R_T\right \|^2\right ]^{1/2} \right |,\\
		&\leq \left |\E \left [ \left \|f_J \right \|_{Lip}^2 \right ]^{1/2} \E \left [ \left \| J^{-1} \right \|_{op}^2 \left \|\boldsymbol R_T\right \|^2\right ]^{1/2} \right |,\\
		&=\left \| J^{-1} \right \|_{op} \E \left [ \left \|\boldsymbol R_T\right \|^2\right ]^{1/2}.
		\end{align*}
		Thanks to Corollary \ref{th:main} and Remark \ref{remark:coeff} there exists a positive constant $K$ (independent from $f$ and $T$ and that can change from one line to another) such that 
		$$\left |\E \left [f(\boldsymbol Y_T)\right ]-\E  [f(\tilde{\boldsymbol G}) ] \right |\leq K \left (\frac{1}{\sqrt T} +\E \left [ \left \|\boldsymbol R_T\right \|^2\right ]^{1/2} \right ).$$
		And using Lemma \ref{lemma:second} we have that
		\begin{align*}
		\E \left [ \left \|\boldsymbol R_T\right \|^2\right ]&=\E \left [ \left \|\diag (m^1,\cdots,m^d) \left ( B-A \diag (m^1,\cdots,m^d)\right )^{-1} \frac{\E[\boldsymbol \lambda _T]-\boldsymbol \lambda _T}{\sqrt T}\right \|^2\right ],\\
		&\leq \left \|\diag (m^1,\cdots,m^d) \left ( B-A \diag (m^1,\cdots,m^d)\right )^{-1} \right \|_{op}^2\frac{1}{T}\E \left [ \left \|\E[\boldsymbol \lambda _T]-\boldsymbol \lambda _T \right \|^2\right ],\\
		&\leq \frac{K}{T} \sum_{i=1}^d \E \left [\left (\lambda^i_T-\E[\lambda^i_T]\right )^2\right ],\quad \text {and by virtue of Lemma \ref{lemma:second},}\\
		&\leq \frac{K}{T}.
		\end{align*}
		And finally 
		$$\left |\E \left [f(\boldsymbol Y_T)\right ]-\E  [f(\tilde{\boldsymbol G}) ] \right | = O\left (\frac{1}{\sqrt T} \right ),$$
		hence the result.
	\end{proof}
\subsection{Proof of Theorem \ref{th:intro}}
\label{section:proof}
We start by recalling that 
$$\boldsymbol{ Y}_T = \frac{\boldsymbol{ L}_T -\diag (m^1,\cdots,m^d)\int_0^T \E [\boldsymbol{ \lambda}_s]\d s}{\sqrt T},$$
and that
\begin{align*}
\boldsymbol{ Y}'_T&= \frac{\boldsymbol{ L}_T -\diag (m^1,\cdots,m^d)\left ( B -A\diag (m^1,\cdots,m^d)\right )^{-1}B\boldsymbol{\mu} T}{\sqrt T},\\
&= \frac{\boldsymbol{ L}_T -\diag (m^1,\cdots,m^d) V^{-1}B\boldsymbol{\mu} T}{\sqrt T},
\end{align*}
where $V= B -A\diag (m^1,\cdots,m^d).$\\
Using Lemma \ref{lemma:expectation}, we have that 
\begin{align*}
\E[\boldsymbol{\lambda}_t]=V^{-1} B\boldsymbol{\mu}+ e^{-V t}\left (I_d- V^{-1}B \right )\boldsymbol{\mu},
\end{align*}
hence
\begin{align*}
\int_0^T \E[\boldsymbol{\lambda}_t] \d t &= V^{-1} B\boldsymbol{\mu} T + \int_0^T e^{-V t} \d t\left (I_d- V^{-1}B \right )\boldsymbol{\mu},\\
&=V^{-1} B\boldsymbol{\mu} T + \left (I_d -e^{-VT} \right )V^{-1}\left (I_d- V^{-1}B \right )\boldsymbol{\mu}.
\end{align*}
We deduce that $$\boldsymbol{ Y'}_T=\boldsymbol{ Y}_T - \boldsymbol{ R'}_T,$$
with $\boldsymbol{ R'}_T= \frac{\diag(m^1,\cdots,m^d) \left (I_d -e^{-VT} \right )\left (V^{-1}- V^{-2}B \right )\boldsymbol{\mu}}{\sqrt T}$.\\
Let $\tilde {\boldsymbol{ G}}\sim \mathcal N \left (0, \tilde C \right )$ as defined in Theorem \ref{th:intro} and let $f\in \mathcal H$. Using a Taylor expansion, we have for some $\boldsymbol{ X}$
$$f(\boldsymbol{ Y'}_T)-f(\tilde {\boldsymbol{ G}})=f(\boldsymbol{ Y}_T)-f(\tilde {\boldsymbol{ G}})-\left <\nabla f (\boldsymbol{ X}) , \boldsymbol{ R'}_T \right >.$$
Since $\sup_{\boldsymbol{x}\in \mathbb R^d}\| \nabla f (\boldsymbol{x})\|_2 \leq 1$ and $\boldsymbol{ Y'}_T=O\left (\frac{1}{\sqrt T} \right )$, we deduce that 
$$d_2(\boldsymbol{ Y'}_T,\tilde{\boldsymbol{ G}} )\leq d_2(\boldsymbol{ Y}_T,\tilde{\boldsymbol{ G}} )+O\left ( \frac{1}{\sqrt T}\right ),$$
which yields using Theorem \ref{th:Y}
$$d_2(\boldsymbol{ Y'}_T,\tilde{\boldsymbol{ G}} )= O\left ( \frac{1}{\sqrt T}\right ).$$
	\section{Lemmata}
	\begin{lemma}
		\label{lemma:Jordan} Set $V=B-A \diag (m^1,\cdots,m^d)$.
		Assume that Assumption \ref{as:vanish} is in force. Then there are positive constants $K$ and $\rho_d$ such that 
		$$\left \| e^{-tV}\right \|_{op}\leq K(1+t^{d-1})e^{-\rho_d t}$$
		for any $t\geq0$.
	\end{lemma}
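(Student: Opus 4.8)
The plan is to reduce the estimate to the Jordan normal form of $V$. Writing $V = P J P^{-1}$ with $J$ the (complex) Jordan form of $V$, one has $e^{-tV} = P e^{-tJ} P^{-1}$, so by submultiplicativity of the operator norm
$$\|e^{-tV}\|_{op} \leq \|P\|_{op}\,\|P^{-1}\|_{op}\,\|e^{-tJ}\|_{op}.$$
The factor $\|P\|_{op}\|P^{-1}\|_{op}$ is a finite constant independent of $t$, so it suffices to bound $\|e^{-tJ}\|_{op}$.

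Next I would treat each Jordan block separately. Since $J$ is block diagonal, $\|e^{-tJ}\|_{op}$ equals the maximum over blocks of $\|e^{-tJ_\ell}\|_{op}$. If $J_\ell$ is a block of size $m_\ell \leq d$ associated with an eigenvalue $\lambda_\ell$, then $J_\ell = \lambda_\ell I + N_\ell$ with $N_\ell$ nilpotent of index $m_\ell$; as $\lambda_\ell I$ and $N_\ell$ commute,
$$e^{-tJ_\ell} = e^{-\lambda_\ell t}\sum_{k=0}^{m_\ell-1}\frac{(-t)^k}{k!}\,N_\ell^k.$$
Hence every entry of $e^{-tJ_\ell}$ is $e^{-\lambda_\ell t}$ times a polynomial in $t$ of degree at most $m_\ell - 1 \leq d-1$. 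Setting $\rho_d := \min_\ell \mathrm{Re}(\lambda_\ell)$, which is strictly positive by Assumption \ref{as:vanish}, one has $|e^{-\lambda_\ell t}| = e^{-\mathrm{Re}(\lambda_\ell) t} \leq e^{-\rho_d t}$ for $t \geq 0$, so each entry of $e^{-tJ}$ is bounded in modulus by $e^{-\rho_d t}$ times a polynomial of degree at most $d-1$.

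To conclude I would invoke the elementary fact that for $t \geq 0$ every polynomial of degree at most $d-1$ is dominated by $K(1 + t^{d-1})$ for a suitable constant $K$, since $t^j \leq 1 + t^{d-1}$ for $0 \leq j \leq d-1$. Because all norms on the finite-dimensional space $\mathcal M_d(\real)$ are equivalent, this entrywise bound transfers to the operator norm at the cost of another constant, yielding $\|e^{-tJ}\|_{op} \leq K(1 + t^{d-1})e^{-\rho_d t}$; multiplying by $\|P\|_{op}\|P^{-1}\|_{op}$ gives the claimed inequality. There is no deep obstacle here, as this is the standard decay estimate for matrix exponentials; the only point requiring care is the bookkeeping of the polynomial degree, namely that it is controlled by $d-1$ uniformly over all blocks (true since each block has size at most $d$) and that $\rho_d$ may be taken equal to the minimal real part, the factor $1+t^{d-1}$ absorbing the resulting loss. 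If one instead chose $\rho_d$ strictly below $\min_\ell \mathrm{Re}(\lambda_\ell)$, the polynomial could be absorbed entirely into the exponential, but retaining it is enough for the statement.
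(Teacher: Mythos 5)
Your proof is correct and takes essentially the same route as the paper: where you use the full complex Jordan normal form $V=PJP^{-1}$ and treat each block $\lambda_\ell I+N_\ell$ separately, the paper uses the equivalent Jordan--Chevalley decomposition $V=P\diag(\rho_1,\dots,\rho_d)P^{-1}+V_{nil}$, but both arguments amount to splitting off a commuting nilpotent part, expanding its exponential as a polynomial of degree at most $d-1$, bounding that polynomial by $K(1+t^{d-1})$, and concluding via submultiplicativity of the operator norm and equivalence of norms in finite dimension. Your handling of possibly complex eigenvalues through $\rho_d=\min_\ell \mathrm{Re}(\lambda_\ell)$ is a minor refinement; the paper simply takes the eigenvalues real and positive, as stated in Assumption \ref{as:vanish}.
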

	\begin{proof}
		First, we call $V$'s eigenvalues  $\rho_1\geq\cdots \geq \rho_d$ and we recall that they are positive. Using Jordan-Chevalley's decomposition we can write 
		$$V=P\diag (\rho_1,\cdots,\rho_d)P^{-1}+V_{nil}$$
		where $V_{nil}$ is a nilpotent matrix that commutes with $P\diag (\rho_1,\cdots,\rho_d)P^{-1}$. Let $t\geq0$, taking the exponential yields
		\begin{align*}
		e^{-tV}&=e^{-tP\diag (\rho_1,\cdots,\rho_d)P^{-1}-tV_{nil}},\\
		&=e^{-tP\diag (\rho_1,\cdots,\rho_d)P^{-1}} e^{-tV_{nil}},\\
		&=P\diag(e^{-\rho_1 t},\cdots,e^{-\rho_d t})P^{-1}\sum_{j=1}^{d-1} t^j \frac{(-V_{nil})^j}{j!}.
		\end{align*}
		Since the operator norm is sub-multiplicative and using the triangular inequality  
		\begin{align*}
		\left \|e^{-tV} \right \|_{op} &\leq \|P\|_{op} \|P^{-1}\|_{op} \left \|\diag(e^{-\rho_1 t},\cdots,e^{-\rho_d t}) \right \|_{op}\sum_{j=1}^{d-1} t^j \frac{\|V_{nil}\|_{op}^j}{j!},\\
		&\leq K \left \|\diag(e^{-\rho_1 t},\cdots,e^{-\rho_d t}) \right \|_{op}\left ( 1+t^{d-1}\right ),\\
		&\leq K \left \|\diag(e^{-\rho_1 t},\cdots,e^{-\rho_d t}) \right \|_{\infty}\left ( 1+t^{d-1}\right ),\\
		\end{align*}
		where the last inequality comes from the fact that all norms are equivalent in finite dimension.
	\end{proof}
	\begin{lemma}
		\label{lemma:expectation}
		If $\boldsymbol{\lambda}$ follows the dynamics \ref{dynamics} then for each $t$ 
		$$\E[\boldsymbol{\lambda}_t]=\left ( B -A\diag (m^1,\cdots,m^d)\right )^{-1} B\boldsymbol{\mu}+ e^{-\big( B-A \diag (m^1,\cdots,m^d)\big) t}\left (I_d- \big( B-A \diag (m^1,\cdots,m^d)\big)^{-1}B \right )\boldsymbol{\mu}.$$
	\end{lemma}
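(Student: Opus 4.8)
The plan is to reduce the computation of $\E[\boldsymbol\lambda_t]$ to a linear integral equation and then to a first-order linear ODE with constant coefficients. First I would take expectations in the matrix dynamics \ref{dynamics}. The essential ingredient is the identity $\E[\d\boldsymbol L_s\mid\F_{s^-}]=\diag(m^1,\cdots,m^d)\boldsymbol\lambda_s\,\d s$, which follows because the counting vector $\boldsymbol H$ has predictable intensity $\boldsymbol\lambda$ while each loss $Y^k_i\sim\nu^k$ is independent of the jump times and has mean $m^k$; equivalently, the compensator of $L^k$ is $m^k\int_0^{\cdot}\lambda^k_s\,\d s$, as one reads off directly from the Poisson representation $L^k_t=\int_{(0,t]\times\R_+\times\R}x\,\mathds 1_{\{\theta\le\lambda^k_s\}}N^k(\d s,\d\theta,\d x)$. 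After justifying Fubini's theorem (see below), replacing $\d\boldsymbol L_s$ by its compensator yields the Volterra equation
\begin{equation*}
g(t):=\E[\boldsymbol\lambda_t]=\boldsymbol\mu+\int_0^t e^{-B(t-s)}A\diag(m^1,\cdots,m^d)\,g(s)\,\d s .
\end{equation*}

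Next I would turn this into an ODE. Writing $e^{-B(t-s)}=e^{-Bt}e^{Bs}$ and setting $w(t)=\int_0^t e^{Bs}A\diag(m^1,\cdots,m^d)\,g(s)\,\d s$, so that $g(t)=\boldsymbol\mu+e^{-Bt}w(t)$, differentiation gives $g'(t)=-Be^{-Bt}w(t)+A\diag(m^1,\cdots,m^d)g(t)$. Since $e^{-Bt}w(t)=g(t)-\boldsymbol\mu$, the equation collapses to the linear system
\begin{equation*}
g'(t)=-Vg(t)+B\boldsymbol\mu,\qquad g(0)=\boldsymbol\mu ,
\end{equation*}
with $V=B-A\diag(m^1,\cdots,m^d)$ as in Assumption \ref{as:vanish}.

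The final step is to solve this constant-coefficient system. Assumption \ref{as:vanish} guarantees that $V$ has positive eigenvalues, hence is invertible, and the unique solution is $g(t)=e^{-Vt}\boldsymbol\mu+\big(\int_0^t e^{-Vr}\,\d r\big)B\boldsymbol\mu=e^{-Vt}\boldsymbol\mu+V^{-1}\big(I_d-e^{-Vt}\big)B\boldsymbol\mu$. Regrouping the terms as $V^{-1}B\boldsymbol\mu+e^{-Vt}\big(I_d-V^{-1}B\big)\boldsymbol\mu$ and substituting back $V=B-A\diag(m^1,\cdots,m^d)$ reproduces the claimed formula verbatim.

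I expect the main obstacle to be the first step rather than the algebra: one must rigorously pass the expectation through the stochastic integral and substitute the predictable compensator for $\d\boldsymbol L_s$. This requires an a priori bound ensuring $\E[\lambda^k_s]<+\infty$ for every $s$ (so that Fubini applies to $\E\!\int_0^t \|e^{-B(t-s)}A\|\,\diag(m^1,\cdots,m^d)\lambda_s\,\d s$), which is precisely where the stability condition Assumption \ref{as:stab} enters; once finiteness is secured, everything downstream is a deterministic linear computation.
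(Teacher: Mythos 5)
Your proof is correct and follows essentially the same route as the paper: both arguments hinge on the compensator identity $\d\E[\boldsymbol L_s]=\diag(m^1,\cdots,m^d)\E[\boldsymbol\lambda_s]\,\d s$ together with the $e^{Bt}$ multiplication-and-differentiation trick for the exponential kernel, arrive at the same linear ODE $g'(t)=-Vg(t)+B\boldsymbol\mu$, $g(0)=\boldsymbol\mu$ with $V=B-A\diag(m^1,\cdots,m^d)$, and solve it using the invertibility of $V$ from Assumption \ref{as:vanish}. The only differences are cosmetic — the paper first derives the pathwise SDE $\d\boldsymbol\lambda_t=B(\boldsymbol\mu-\boldsymbol\lambda_t)\d t+A\,\d\boldsymbol L_t$ and then takes expectations, whereas you take expectations first and then differentiate the resulting Volterra equation — plus your added care about Fubini and the a priori finiteness of $\E[\lambda^k_s]$, which the paper leaves implicit.
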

	\begin{proof}
		First we prove that if the kernels are exponential, then $\boldsymbol{\lambda}$ is a Markov process that follows an SDE. By multiplying \ref{dynamics} by $e^{B t}$ we get 
		\begin{align*}
		e^{B t}\boldsymbol{\lambda}_t&=e^{B t}\boldsymbol{\mu}+\int_{[0,t)}e^{B s} A \d \boldsymbol{L}_s,\quad \text{and by differentiating, }\\
		e^{B t}\left (\d \boldsymbol{\lambda}_t+B \boldsymbol{\lambda}_t \d t\right )&= e^{B t} B \boldsymbol{\mu} \d t + e^{B t} A \d \boldsymbol{L}_t.
		\end{align*}
		Hence the SDE
		\begin{equation}
		\label{SDE}
		\d\boldsymbol{ \lambda}_t = B\left (\boldsymbol{\mu}- \boldsymbol{ \lambda}_t \right)\d t + A \d\boldsymbol{L}_t.
		\end{equation}
		By taking the expected value of \ref{SDE} and keepind in mind that $\d\E [\boldsymbol{L}_s]=\diag (m^1,\cdots,m^d)\E[\boldsymbol{\lambda}_s] \d s$, the intensity's expectation is the solution of the multivariate ODE 
		\begin{equation}
		\label{ODE}
		\d\E[\boldsymbol{\lambda}_t]=B \boldsymbol{\mu}\d t-\left ( B -A\diag (m^1,\cdots,m^d)\right ) \E [\boldsymbol{\lambda}_t]\d t,
		\end{equation}
		with initial condition $\E[\boldsymbol{\lambda}_0]=\mu$. Assumption \ref{as:vanish} guarantees that all the eigenvalues of $\left ( B -A\diag (m^1,\cdots,m^d)\right )$ are positive, thus the matrix is invertible and we get the result.
	\end{proof}
	\begin{lemma}
		\label{A12}
		The difference between the intensity and its expected value is
		$$\boldsymbol{ \lambda}_t-\E[\boldsymbol{ \lambda}_t]=\int_{[0,t)}e^{-\big( B-A \diag (m^1,\cdots,m^d)\big) (t-s)} A \d \boldsymbol{M}_s,$$
		where $$\boldsymbol{ M}_t=\boldsymbol{L}_t-\int_0^t \diag (m^1,\cdots,m^d) \boldsymbol{\lambda}_s \d s.$$
		Its integral with respect to time is
		$$\int_0^T \boldsymbol{ \lambda}_t-\E[\boldsymbol{ \lambda}_t] \d t = \int_0^Te^{-\big( B-A \diag (m^1,\cdots,m^d)\big) (T-s)} A\boldsymbol{ M}_s ds. $$
	\end{lemma}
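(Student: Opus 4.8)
The plan is to read off a closed linear SDE for $\boldsymbol{\lambda}$ and subtract the deterministic ODE for its mean. First I would recall from the proof of Lemma \ref{lemma:expectation} that, for exponential kernels, $\boldsymbol{\lambda}$ solves the SDE \ref{SDE}, namely $\d\boldsymbol{\lambda}_t = B(\boldsymbol{\mu}-\boldsymbol{\lambda}_t)\d t + A\,\d\boldsymbol{L}_t$. Since $\boldsymbol{M}_t=\boldsymbol{L}_t-\int_0^t\diag(m^1,\cdots,m^d)\boldsymbol{\lambda}_s\d s$, I substitute $\d\boldsymbol{L}_t=\d\boldsymbol{M}_t+\diag(m^1,\cdots,m^d)\boldsymbol{\lambda}_t\d t$ to rewrite this as $\d\boldsymbol{\lambda}_t = B\boldsymbol{\mu}\d t - V\boldsymbol{\lambda}_t\d t + A\,\d\boldsymbol{M}_t$, where $V=B-A\diag(m^1,\cdots,m^d)$. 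Subtracting the ODE \ref{ODE}, which in the same notation reads $\d\E[\boldsymbol{\lambda}_t]=B\boldsymbol{\mu}\d t - V\E[\boldsymbol{\lambda}_t]\d t$, I obtain for $\boldsymbol{\Delta}_t:=\boldsymbol{\lambda}_t-\E[\boldsymbol{\lambda}_t]$ the linear SDE $\d\boldsymbol{\Delta}_t=-V\boldsymbol{\Delta}_t\d t + A\,\d\boldsymbol{M}_t$, with initial condition $\boldsymbol{\Delta}_0=0$ because $\boldsymbol{\lambda}_0=\E[\boldsymbol{\lambda}_0]=\boldsymbol{\mu}$.

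For the first identity I would solve this SDE with the integrating factor $e^{Vt}$. As $t\mapsto e^{Vt}$ is deterministic, continuous and of finite variation, the product rule carries no quadratic-covariation correction, so $\d(e^{Vt}\boldsymbol{\Delta}_t)=Ve^{Vt}\boldsymbol{\Delta}_t\d t + e^{Vt}\,\d\boldsymbol{\Delta}_t = e^{Vt}A\,\d\boldsymbol{M}_t$. Integrating over $[0,t)$ (the boundary term vanishes since $\boldsymbol{\Delta}_0=0$) and multiplying by $e^{-Vt}$ yields $\boldsymbol{\Delta}_t=\int_{[0,t)}e^{-V(t-s)}A\,\d\boldsymbol{M}_s$, which is the claimed formula.

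For the integrated identity I would integrate the difference SDE over $[0,T]$, using $\boldsymbol{\Delta}_0=0$ and $\boldsymbol{M}_0=0$, to get the relation $\boldsymbol{\Delta}_T=-V\int_0^T\boldsymbol{\Delta}_t\d t + A\boldsymbol{M}_T$. Write $S_T:=\int_0^T\boldsymbol{\Delta}_t\d t$ and $R_T:=\int_0^T e^{-V(T-s)}A\boldsymbol{M}_s\,\d s = e^{-VT}\int_0^T e^{Vs}A\boldsymbol{M}_s\,\d s$. Since $S_T$ is absolutely continuous with $S_T'=\boldsymbol{\Delta}_T$, the relation above gives $S_T'=-VS_T+A\boldsymbol{M}_T$; differentiating the explicit expression for $R_T$ gives the same ODE $R_T'=-VR_T+A\boldsymbol{M}_T$, and $S_0=R_0=0$. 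Hence $D_T:=S_T-R_T$ solves $D_T'=-VD_T$ with $D_0=0$, and from $\tfrac{d}{dT}\!\left(e^{VT}D_T\right)=0$ one concludes $D_T\equiv 0$, i.e. $\int_0^T\boldsymbol{\lambda}_t-\E[\boldsymbol{\lambda}_t]\,\d t=\int_0^T e^{-V(T-s)}A\boldsymbol{M}_s\,\d s$.

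I expect the only delicate point to be this last step: a naive stochastic Fubini applied to the double integral $\int_0^T\!\int_0^t e^{-V(t-s)}A\,\d\boldsymbol{M}_s\,\d t$ would produce a $\d\boldsymbol{M}_s$-integral rather than the Lebesgue-in-time integral of $e^{-V(T-s)}A\boldsymbol{M}_s$ appearing in the statement, so the two forms must be reconciled by an extra integration by parts. The ODE-uniqueness argument sidesteps this by matching $S_T$ and $R_T$ as solutions of one and the same linear ODE driven by $A\boldsymbol{M}_T$. I would also record explicitly that $V$ is invertible by Assumption \ref{as:vanish}, which is what makes both the mean ODE \ref{ODE} and the integrated representation well posed.
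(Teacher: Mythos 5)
Your proof of the first identity is essentially the paper's: subtract the mean ODE \ref{ODE} from the SDE \ref{SDE}, observe $\boldsymbol{\Delta}_0=0$, and solve the resulting linear SDE by variation of parameters (your integrating factor $e^{Vt}$ is the same computation, and your remark that no covariation correction appears, since $e^{Vt}$ is deterministic and of finite variation, is the correct justification).

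For the integrated identity, however, you take a genuinely different route. The paper integrates the first identity in time, applies a Fubini interchange to the double integral $\int_0^T\int_{[0,t)}e^{-V(t-s)}A\,\d\boldsymbol{M}_s\,\d t$ to obtain $\int_0^T\Phi(T-s)A\,\d\boldsymbol{M}_s$, where $\Phi$ is the antiderivative of $u\mapsto e^{-Vu}$ vanishing at zero, and then converts this stochastic integral into the Lebesgue integral $\int_0^T e^{-V(T-s)}A\boldsymbol{M}_s\,\d s$ by a pathwise integration by parts (legitimate because $\Phi(T-\cdot)$ is continuous, so there is no common-jump covariation term). You instead integrate the difference SDE to get $\boldsymbol{\Delta}_T=-VS_T+A\boldsymbol{M}_T$, and then identify $S_T=\int_0^T\boldsymbol{\Delta}_t\,\d t$ and $R_T=\int_0^T e^{-V(T-s)}A\boldsymbol{M}_s\,\d s$ as solutions of one and the same linear ODE driven by $A\boldsymbol{M}_T$ with zero initial condition, concluding by uniqueness. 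Your closing paragraph diagnoses the situation exactly: the naive Fubini output is a $\d\boldsymbol{M}_s$-integral, and the paper does need the extra integration by parts to reconcile the two forms; your ODE-uniqueness argument sidesteps both the stochastic Fubini and the semimartingale integration by parts, at the price of only elementary facts (a.e. differentiability of time integrals of c\`adl\`ag paths, uniqueness for linear ODEs). The paper's computation is shorter once those stochastic-calculus tools are granted; yours is more self-contained and pathwise. One small correction: invertibility of $V$ (Assumption \ref{as:vanish}) is not actually needed for this lemma, since neither identity involves $V^{-1}$; it only matters for the explicit formula in Lemma \ref{lemma:expectation}.
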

	\begin{proof}
		By taking the difference between \ref{SDE} and \ref{ODE} we can verify that $\boldsymbol{ \lambda}-\E[\boldsymbol{ \lambda}]$ is a solution of the SDE
		$$\d \left ( \boldsymbol{ \lambda}_t-\E[\boldsymbol{ \lambda}_t]\right )=-\big( B-A \diag (m^1,\cdots,m^d)\big)\left ( \boldsymbol{ \lambda}_t-\E[\boldsymbol{ \lambda}_t]\right )+A\d \boldsymbol{ M}_t$$
		with the initial condition $\boldsymbol{ \lambda}_0-\E[\boldsymbol{ \lambda}_0]=\boldsymbol 0.$ Solving the SDE (using variation of parameters) yields
		$$\boldsymbol{ \lambda}_t-\E[\boldsymbol{ \lambda}_t]=\int_{[0,t)}e^{-\big( B-A \diag (m^1,\cdots,m^d)\big) (t-s)} A \d \boldsymbol{M}_s.$$
		For the second equality, we start by taking the integral with respect to time until the instant $T$
		\begin{align*}
		\int_0^T \boldsymbol{ \lambda}_t-\E[\boldsymbol{ \lambda}_t] \d t &=\int_0^T\int_{[0,t)}e^{-\big(B-A \diag (m^1,\cdots,m^d)\big) (t-s)} A \d \boldsymbol{M}_s\d t,\\
		&=\int_0^T\int_0^T \mathds 1_{s<t} e^{-\big( B-A \diag (m^1,\cdots,m^d)\big) (t-s)} A \d \boldsymbol{M}_s\d t, \quad \text{and using Fubini's identity}\\
		&=\int_0^T\int_s^T 
		e^{-\big( B-A \diag (m^1,\cdots,m^d)\big) (t-s)}\d t A \d \boldsymbol{M}_s,\\
		&= \int_0^T\int_0^{T-s} 
		e^{-\big( B-A \diag (m^1,\cdots,m^d)\big) u}\d u A \d \boldsymbol{M}_s,\quad \text{using a change of variables}\\
		&= \int_0^T\Phi(T-s) A \d \boldsymbol{M}_s,
		\end{align*}
		where $\Phi$ is the anti-derivative of $u\to e^{-\big( B-A \diag (m^1,\cdots,m^d)\big) u}$  that vanishes at zero. In absence of common jumps, the integration by parts formula is
		$$0=\Phi(0)A\boldsymbol{ M}_T-\Phi(T)A\boldsymbol{ M}_0=\int_0^T \d \left (\Phi(T-s) \right )A \boldsymbol{ M}_s +\Phi(T-s) A \d \boldsymbol{M}_s,$$
		hence
		$$\int_0^T \boldsymbol{ \lambda}_t-\E[\boldsymbol{ \lambda}_t] \d t = \int_0^Te^{-\big( B-A \diag (m^1,\cdots,m^d)\big) (T-s)} A\boldsymbol{ M}_s \d s. $$
	\end{proof}
	\begin{lemma}
		\label{lemma:second}
		For any fixed $n,m$ in $\llbracket 1,d \rrbracket$ we have for any $T \geq 0$
		$$\E \left [\left ( \lambda^n_T-\E [\lambda^n_T]\right  ) \left ( \lambda^m_T -\E [\lambda^m_T]\right ) \right ]\leq K $$
		where $K$ is a positive constant independent from $T$. 
	\end{lemma}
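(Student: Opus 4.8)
The plan is to exploit the martingale representation of the centered intensity furnished by Lemma \ref{A12} and then reduce the covariance to a purely deterministic time integral via an $L^2$ isometry for the compound martingale $\boldsymbol M$. Writing $V=B-A\diag(m^1,\cdots,m^d)$ and $\boldsymbol M_t=\boldsymbol L_t-\diag(m^1,\cdots,m^d)\int_0^t\boldsymbol\lambda_s\,\d s$, Lemma \ref{A12} gives componentwise
$$\lambda^n_T-\E[\lambda^n_T]=\sum_{k=1}^d\int_{[0,T)}\big(e^{-V(T-s)}A\big)_{nk}\,\d M^k_s .$$
Since the integrands $\big(e^{-V(T-s)}A\big)_{nk}$ are deterministic once $T$ is fixed, the product of the two centered intensities is a double stochastic integral, and taking expectations leaves only the diagonal contributions.

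Next I would invoke the orthogonality and quadratic-variation structure of $\boldsymbol M$. Because each $M^k$ is driven by its own Poisson measure $N^k$ and the $N^k$ are independent, the martingales $M^k$ and $M^l$ share no jumps, so $\langle M^k,M^l\rangle\equiv 0$ for $k\neq l$, while the predictable bracket of $M^k$ is $\d\langle M^k\rangle_s=\big(\int x^2\nu^k(\d x)\big)\lambda^k_s\,\d s$ (this is exactly the bracket already computed in the proof of Theorem \ref{th:multi-marginal}). The isometry then yields
$$\E\big[(\lambda^n_T-\E[\lambda^n_T])(\lambda^m_T-\E[\lambda^m_T])\big]=\sum_{k=1}^d\Big(\int x^2\nu^k(\d x)\Big)\int_0^T\big(e^{-V(T-s)}A\big)_{nk}\big(e^{-V(T-s)}A\big)_{mk}\,\E[\lambda^k_s]\,\d s .$$

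Finally I would bound each factor uniformly in $T$. Each entry of $e^{-V(T-s)}A$ is dominated by $\|e^{-V(T-s)}\|_{op}\|A\|_{op}$, and Lemma \ref{lemma:Jordan} gives $\|e^{-V(T-s)}\|_{op}\leq K(1+(T-s)^{d-1})e^{-\rho_d(T-s)}$; Lemma \ref{lemma:expectation} shows $\E[\lambda^k_s]$ is bounded uniformly in $s$, since the matrix-exponential term there decays. After the substitution $u=T-s$ the covariance is dominated by $K\int_0^\infty(1+u^{d-1})^2e^{-2\rho_d u}\,\d u$, a finite constant independent of $T$ because exponential decay beats the polynomial growth. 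The main obstacle I anticipate is the rigorous justification of the second step, namely establishing $\langle M^k,M^l\rangle=0$ for $k\neq l$ together with the precise form of $\d\langle M^k\rangle_s$ and the corresponding $L^2$ isometry for integrals against the compound martingale; once that identity is secured, the remaining estimates are routine applications of the two cited lemmata.
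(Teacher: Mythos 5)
Your proposal follows essentially the same route as the paper's proof: represent the centered intensity via Lemma \ref{A12} as a stochastic integral against $\boldsymbol{M}$, apply the It\^o isometry with the bracket matrix being diagonal (since the independent Poisson drivers share no jumps), and conclude from the uniform boundedness of $\E[\lambda^k_s]$ and the exponential decay of $e^{-V(T-s)}$ guaranteed by the positivity of $V$'s eigenvalues. Your componentwise write-up is in fact slightly more careful than the paper's matrix version, correctly recording the second-moment factor $\int x^2\nu^k(\d x)$ and the diagonal indicator in the bracket (the paper's $\mathds 1_{i\neq j}$ is a typo for $\mathds 1_{i=j}$), so the argument is correct.
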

	\begin{proof}
		Using the first equality of Lemma \ref{A12} we have 
		$$\left ( \boldsymbol{ \lambda}_T-\E[\boldsymbol{ \lambda}_T] \right )\left ( \boldsymbol{ \lambda}_T-\E[\boldsymbol{ \lambda}_T] \right )^\top =\int_{[0,T)}e^{-\big( B-A \diag (m^1,\cdots,m^d)\big) (T-s)} A \d \boldsymbol{M}_s\int_{[0,T)} \d\boldsymbol{ M}_s^\top  A^\top e^{-\big( B-A \diag (m^1,\cdots,m^d)\big)^\top (T-s)},  $$
		
		which yields using the multivariate Ito isometry
		$$\E \left [\left ( \boldsymbol{ \lambda}_T-\E[\boldsymbol{ \lambda}_T] \right )\left ( \boldsymbol{ \lambda}_T-\E[\boldsymbol{ \lambda}_T] \right )^\top \right ]=\int_{[0,T)}e^{-\big( B-A \diag (m^1,\cdots,m^d)\big) (T-s)} A \d\E \left [\left [ \boldsymbol{ M}\right ]_s\right ]  A^\top e^{-\big( B-A \diag (m^1,\cdots,m^d)\big)^\top (T-s)},$$
		where $\ d\left [ \boldsymbol{ M}\right ]$ is the quadratic variation matrix infinitesimal growth
		\begin{align*}
		\d\big (\left [ \boldsymbol{ M}\right ]_s\big)_{ij}&= \d\left [ M^i,M^j\right ]_s,\\
		&=\mathds 1_{i\neq j} \d\left [ M^i\right ]_s,\\
		&=\mathds 1_{i\neq j} \E \left [ \lambda ^i_s\right ] \d s.\\
		\end{align*}
		Since the expected value of $\lambda^i_s$ is bounded by a constant independent from $T$ and since all of $\big( B-A \diag (m^1,\cdots,m^d)\big)$'s eigen-values are positive we obtain the result.
	\end{proof}
	\begin{lemma}
		\label{lemma:A13}
		Assume that Assumptions \ref{as:third} and \ref{as:stab} are is force. For $i,j \in \llbracket 1,d \rrbracket$ let $I_T$ be the quantity defined in \ref{eq:IT}, then we have
		$$\frac{1}{T}I_T \leq K.$$
	\end{lemma}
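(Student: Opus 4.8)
The plan is to fix the jump size $x$, reduce \eqref{eq:IT} to a double time integral, and extract from the martingale structure of the perturbation process a covariance that decays in $|t-s|$; Assumption \ref{as:third} will enter only at the very end when integrating against $\nu^{n_1}$. \textbf{Step 1 (reduction).} Since $X\sim\nu^{n_1}$ appears in \eqref{eq:IT} both as a multiplicative weight and as the initial jump size of the perturbation, write, with the shorthands $\hat M^{(t)}:=\hat M^{n_2,n_1,t,x}$ and $\hat\lambda^{(t)}:=\hat\lambda^{n_2,n_1,t,x}$,
\[
I_T=\int_{\R_+} x^2\,\E\Big[\Big(\int_0^{v_{i\%p}T}\lambda^{n_1}_t\,\hat M^{(t)}_{v_{j\%p}T}\,\d t\Big)^2\Big]\,\nu^{n_1}(\d x).
\]
For fixed $x$ I expand the square and symmetrise in $(t,s)$, reducing to a double integral over $\{0\le s\le t\le v_{i\%p}T\}$ of $\E[\lambda^{n_1}_t\lambda^{n_1}_s\hat M^{(t)}_{v_{j\%p}T}\hat M^{(s)}_{v_{j\%p}T}]$. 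Each factor is centred, since $\hat M^{(t)}_\cdot$ is a martingale null at $t$ and $\lambda^{n_1}_t$ is $\F_t$-measurable, so $\E[\lambda^{n_1}_t\hat M^{(t)}_{v_{j\%p}T}]=\E[\lambda^{n_1}_t\E_t\hat M^{(t)}_{v_{j\%p}T}]=0$.

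\textbf{Step 2 (covariance of the two perturbation martingales).} Fix $s<t$ and condition on $\F_t$. Splitting $\hat M^{(s)}_{v_{j\%p}T}=\hat M^{(s)}_{t}+(\hat M^{(s)}_{v_{j\%p}T}-\hat M^{(s)}_t)$ and using $\E_t\hat M^{(t)}_{v_{j\%p}T}=0$ kills the non-decaying accumulated part $\hat M^{(s)}_t$; what survives is the expected predictable covariation of $\hat M^{(t)}$ and $\hat M^{(s)}$ on $[t,v_{j\%p}T]$. By Proposition \ref{prop:DescDH} both are driven by the \emph{same} measure $N^{n_2}$ through the overlapping thinning windows $[\lambda^{n_2}_v,\lambda^{n_2}_v+\hat\lambda^{(t)}_v]$ and $[\lambda^{n_2}_v,\lambda^{n_2}_v+\hat\lambda^{(s)}_v]$, whence
\[
\E_t\big[\hat M^{(t)}_{v_{j\%p}T}\hat M^{(s)}_{v_{j\%p}T}\big]=\Big(\int y^2\nu^{n_2}(\d y)\Big)\E_t\Big[\int_t^{v_{j\%p}T}\min\big(\hat\lambda^{(t)}_v,\hat\lambda^{(s)}_v\big)\,\d v\Big]\ge 0.
\]
Bounding $\min\le\hat\lambda^{(s)}_v$, using that the conditional mean of the perturbation intensity obeys the linear system $\d\E_t[\hat\lambda^{(s)}_v]=-V\E_t[\hat\lambda^{(s)}_v]\,\d v$ (so $\E_t[\hat\lambda^{(s)}_v]=e^{-V(v-t)}\hat\lambda^{(s)}_t$), and invoking the positivity of the eigenvalues of $V$ (Lemma \ref{lemma:Jordan}) gives the key estimate $0\le\E_t[\hat M^{(t)}_{v_{j\%p}T}\hat M^{(s)}_{v_{j\%p}T}]\le K\|\hat\lambda^{(s)}_t\|$, with $K$ independent of $T,s,t,x$.

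\textbf{Step 3 (covariance decay and conclusion).} Inserting this bound, $I_T$ is controlled by $\int x^2\nu^{n_1}(\d x)$ times $\int_0^{v_{i\%p}T}\!\!\int_0^t\E[\lambda^{n_1}_t\lambda^{n_1}_s\|\hat\lambda^{(s)}_t\|]\,\d s\,\d t$. Applying Hölder's inequality with three exponents equal to $3$,
\[
\E\big[\lambda^{n_1}_t\lambda^{n_1}_s\|\hat\lambda^{(s)}_t\|\big]\le\E[(\lambda^{n_1}_t)^3]^{1/3}\E[(\lambda^{n_1}_s)^3]^{1/3}\E[\|\hat\lambda^{(s)}_t\|^3]^{1/3}.
\]
Under Assumptions \ref{as:stab}--\ref{as:third} the intensity has uniformly bounded third moments (by the moment-recursion argument of Lemma \ref{lemma:second}), while the perturbation intensity, being a subcritical generalised Hawkes process started at $s$ from the boost $x\boldsymbol A_{\cdot n_1}$ (Remark \ref{remark:tilde}), satisfies $\E[\|\hat\lambda^{(s)}_t\|^3]\le K(x^3+x^2+x)(1+(t-s)^{d-1})^3e^{-3\rho_d(t-s)}$. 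Hence the integrand is bounded by $K(x+x^{2/3}+x^{1/3})(1+(t-s)^{d-1})e^{-\rho_d(t-s)}$, whose inner $\d s$-integral is uniformly bounded in $t$ and whose outer $\d t$-integral is $O(T)$. Integrating the weight $x^2$ against $\nu^{n_1}$ leaves $\int(x^3+x^{8/3}+x^{7/3})\nu^{n_1}(\d x)<+\infty$ by Assumption \ref{as:third}, so $I_T\le KT$, i.e. $\frac1T I_T\le K$.

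\textbf{Main obstacle.} The delicate point is the third-moment estimate on the perturbation intensity in Step 3. Its \emph{mean} decays cleanly via the linear ODE of Step 2, but in $\E[\lambda^{n_1}_t\lambda^{n_1}_s\|\hat\lambda^{(s)}_t\|]$ the factors $\lambda^{n_1}_t$ and $\hat\lambda^{(s)}_t$ are both $\F_t$-measurable and genuinely correlated — they share the jumps of $\boldsymbol N$ on $(s,t)$, which is precisely what produced the covariation in Step 2 — so the mean alone does not suffice and one must propagate an exponentially decaying bound on the \emph{third} moment of $\hat\lambda^{(s)}_t$ through the subcritical dynamics, either via a closed system of moment ODEs treated by Gronwall or by computing that moment on the independent copy of Remark \ref{remark:tilde}. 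It is exactly this pairing of a third moment of $\hat\lambda^{(s)}_t$ with a third moment of $\lambda$ that makes Assumption \ref{as:third} the natural hypothesis and keeps $I_T$ linear in $T$ rather than quadratic.
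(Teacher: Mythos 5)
Your Steps 1 and 2 coincide with the paper's own proof: the same expansion of $I_T$ into a double time integral, the same martingale argument killing the accumulated part $\hat M^{(s)}_t$, the same identification of the conditional covariation as $\int y^2\nu^{n_2}(\d y)\int_t^{\cdot}\E_t\big[\min\big(\hat\lambda^{(t)}_u,\hat\lambda^{(s)}_u\big)\big]\d u$, and the same bound $\min\le\hat\lambda^{(s)}$ followed by the linear decay of the conditional mean. The divergence is in Step 3, and it is genuine. The paper does not decouple by H\"older: it invokes Remark \ref{remark:tilde} to replace $\hat{\boldsymbol\lambda}^{(s)}$ by the copy $\tilde{\boldsymbol\lambda}^{(s)}$ driven by an independent Poisson measure, factorizes $\E_s\big[\lambda^{n_1}_t\,\tilde\lambda^{(s)}_t\big]=\E_s\big[\lambda^{n_1}_t\big]\,\E\big[\tilde\lambda^{(s)}_t\big]$ with $\E\big[\tilde{\boldsymbol\lambda}^{(s)}_t\big]=e^{-V(t-s)}\boldsymbol A_{\cdot n_1}x$ (where $V=B-A\diag(m^1,\cdots,m^d)$), and then only needs $\E\big[\lambda^{n_1}_s\lambda^{n_1}_t\big]\le K$, i.e.\ Cauchy--Schwarz plus the second-moment bound of Lemma \ref{lemma:second}; Assumption \ref{as:third} enters solely through the explicit factor $x$, giving $\int x^3\nu^{n_1}(\d x)<\infty$. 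Your route is heavier: the three-exponent H\"older forces you to control $\E\big[(\lambda^{n_1}_t)^3\big]$ uniformly in $t$ and $\E\big[\|\hat\lambda^{(s)}_t\|^3\big]$ with exponential decay in $t-s$. Both estimates are true under the standing assumptions and provable by moment ODEs, but neither exists in the paper (Lemma \ref{lemma:second} only treats second moments), and in the multivariate setting the third-moment system is a real lemma, not a remark; as written, your proof is therefore incomplete exactly where you flag the ``main obstacle.'' A small quantitative slip: the rate $e^{-3\rho_d(t-s)}$ you claim for the third moment is too strong, since the slowest mode of the third-moment system is fed by the first moment and decays only like $e^{-\rho_d(t-s)}$; this is harmless, as any exponential rate closes the argument.

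More importantly, the obstacle you describe is not actually there, and seeing why explains why the paper's lighter argument is legitimate. You assert that $\lambda^{n_1}_t$ and $\hat\lambda^{(s)}_t$ are genuinely correlated because they ``share the jumps of $\boldsymbol N$ on $(s,t)$.'' They share none: by Proposition \ref{prop:DescDH}, $\boldsymbol\lambda$ is fed by the points of $\boldsymbol N$ lying in the bands $\{\theta\le\lambda^k_u\}$, while $\hat{\boldsymbol\lambda}^{(s)}$ is fed by the points in the bands $\{\lambda^k_u<\theta\le\lambda^k_u+\hat\lambda^{(s),k}_u\}$; for each $k$ these regions are disjoint, so the two processes almost surely never jump together. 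Consequently the pair $\big(\boldsymbol\lambda,\hat{\boldsymbol\lambda}^{(s)}\big)$ is a Markov system whose generator is exactly that of $\big(\boldsymbol\lambda,\tilde{\boldsymbol\lambda}^{(s)}\big)$ with independent drivers, so the two pairs agree in joint law and $\E_s\big[\lambda^{n_1}_t\hat\lambda^{(s)}_t\big]=\E_s\big[\lambda^{n_1}_t\big]\E\big[\tilde\lambda^{(s)}_t\big]$. This joint statement (slightly stronger than the marginal equality stated in Remark \ref{remark:tilde}, and used implicitly in the paper's proof) is what makes the factorization valid. So your H\"older detour is a correct but costlier way around a correlation that vanishes anyway: to finish your proof you must supply the two third-moment lemmas, whereas sharpening Remark \ref{remark:tilde} to the joint law recovers the paper's shorter argument with only second moments.
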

	\begin{proof}
		Let $m \leq n$ be two integers in $\llbracket 1,p \rrbracket$ and $i,j$ in $\llbracket 1,d \rrbracket$. 
		We start by expanding \ref{eq:IT}
		\begin{align*}
		I_T&=\E \left [  \E_{X} \left[ \left (\int_0^{v_m T} X \lambda^i_t \hat M ^{j,i,t,X}_{v_n T}   \d t\right )^2\right ]  \right ],\\
		&=2 \E \left [  \E_{X} \left[  \int_0^{v_m T} \int_0^t X^2 \lambda^i_t \hat M ^{j,i,t,X}_{v_n T}  \lambda^i_s \hat M ^{j,i,s,X}_{v_n T} \d s \d t \right ]  \right ],\\
		&=2 \int_{{\R_+}} \int_0^{v_m T} \int_0^t x^2 \E \left [\lambda^i_t \lambda^i_s \E_t \left [ \hat M ^{j,i,t,x}_{v_n T} \hat M ^{j,i,s,x}_{v_n T}\right ]\right ] \d s \d t \nu^i{\d x},\\
		\end{align*}
		where we recall that $\hat M ^{j,i,t,x}_{v_n T}=\hat L ^{j,i,t,x}_{v_n T} -m^j\int_t^{v_n T} \hat \lambda^{j,i,t,x}_s ds$ is a martingale. Thus the product's expectation is
		\begin{align*}
		\E_t \left [ \hat M ^{j,i,t,x}_{v_n T} \hat M ^{j,i,s,x}_{v_n T}\right ]&= \E_t \left [ \hat M ^{j,i,t,x}_{v_n T} \left (\hat M ^{j,i,s,x}_{v_n T} - \hat M ^{j,i,s,x}_t\right)\right ] + \E _t\left [ \hat M ^{j,i,t,x}_{v_n T} \hat M ^{j,i,s,x}_t \right ],\\
		\end{align*}
		the last term vanishes since for any $t\leq v_mT \leq v_nT$, $\E _t\left [ \hat M ^{j,i,t,x}_{v_n T} \hat M ^{j,i,s,x}_t \right ]= \hat M ^{j,i,s,x}_t  \E _t\left [ \hat M ^{j,i,t,x}_{v_n T}  \right ]=0$. Thus 
		
		\begin{align*}
		\E_t \left [ \hat M ^{j,i,t,x}_{v_n T} \hat M ^{j,i,s,x}_{v_n T}\right ]&=\E _t\left [ \int_t^{v_n T} \int_{{\R_+}} \int_{\R_+}y^2 \mathds 1_{\theta\leq \hat \lambda ^{j,i,t,x}_u}\mathds 1_{\theta\leq \hat \lambda ^{j,i,s,x}_u} \d \theta \nu^j(\d y) \d u\right],\\
		&=\E_t \left [ \int_t^{v_n T} \int_{{\R_+}} \int_{\R_+}y^2 \mathds 1_{\theta\leq \min(\hat \lambda ^{j,i,t,x}_u,\hat\lambda ^{j,i,s,x}_u)}\d \theta \nu^j(\d y) \d u\right],\\
		&=\int_{{\R_+}} y^2 d\nu^j(y) \int_t^{v_n T}\E_t \left [  \min(\hat \lambda ^{j,i,t,x}_u,\hat\lambda ^{j,i,s,x}_u)\right]\d u,\\
		&\leq K \int_t^{v_n T}\E_t \left [   \hat\lambda ^{j,i,s,x}_u\right]\d u.
		\end{align*}
		As shown in remark \ref{remark:tilde}, $\hat{\boldsymbol \lambda} ^{i,s,x}_u$ has the same dynamics as a Hawkes process $\tilde{\boldsymbol \lambda} ^{i,s,x}_u$ that satisfies -if we follow the lines of the proof of Lemma \ref{lemma:expectation}- the SDE
		\begin{equation}
		\label{eq:SDEtilde}
		\d\tilde {\boldsymbol{ \lambda}}_u ^{i,s,x}= -B \tilde {\boldsymbol{ \lambda}}_u^{i,s,x} \d u + A \d\tilde {\boldsymbol{L}}_u^{i,s,x}
		\end{equation}
		whose solution yields
		\begin{align*}
		\E_t \left [   \tilde{\boldsymbol \lambda} ^{i,s,x}_u\right ]&=e^{-V (u-t)}\tilde{\boldsymbol \lambda} ^{i,s,x}_t,\\
		\end{align*}
		where $V=B-A \diag (m^1,\cdots,m^d)$ whose eigen-values $\rho_1,\cdots,\rho_d$ are positive.
		Hence
		\begin{align*}
		\E_t \left [ \hat M ^{j,i,t,x}_{v_n T} \hat M ^{j,i,s,x}_{v_n T}\right ] &\leq K \left [ \int_t^{v_n T}e^{-V (u-t)} \tilde{\boldsymbol \lambda} ^{i,s,x}_t \d u\right ]^j,\\
		&=K \left [ V^{-1} \left (I_d-e^{-V({v_n T}-t)} \right )\tilde{\boldsymbol \lambda} ^{i,s,x}_t \right ]^j.\\
		\end{align*}
		By plugging this inequality in $I_T$'s expression we get 
		\begin{align*}
		I_1 &\leq K \int_{{\R_+}} \int_0^{v_mT} \int_0^t x^2 \E \left [\lambda^i_t \lambda^i_s \left [ V^{-1} \left (I_d-e^{-V({v_n T}-t)} \right )\tilde{\boldsymbol \lambda} ^{i,s,x}_t \right ]^j\right ] \d s \d t \nu^i(\d x),\\
		&=K \int_{{\R_+}} \int_0^{v_mT} \int_0^t x^2  \left [ V^{-1} \left (I_d-e^{-V({v_n T}-t)} \right ) \E \left [\lambda^i_t \lambda^i_s\tilde{\boldsymbol \lambda} ^{i,s,x}_t \right ]\right ]^j \d s \d t\nu^i(\d x),\\
		&=K \int_{{\R_+}} \int_0^{v_mT} \int_0^t x^2  \left [ V^{-1} \left (I_d-e^{-V({v_n T}-t)} \right ) \E \left [\lambda^i_s \E_s\left [\lambda^i_t\tilde{\boldsymbol \lambda} ^{i,s,x}_t \right ] \right ]\right ]^j \d s \d t \nu^i(\d x),\\
		&=K \int_{{\R_+}} \int_0^{v_mT} \int_0^t x^2  \left [ V^{-1} \left (I_d-e^{-V({v_n T}-t)} \right ) \E \left [\lambda^i_s \E_s\left [\lambda^i_t\tilde{\boldsymbol \lambda} ^{i,s,x}_t \right ] \right ]\right ]^j \d s \d t \nu^i(\d x),\\
		\end{align*}
		since $\tilde{\boldsymbol \lambda} ^{i,s,x}_t$ starts at $s$, $\E_s\left [\lambda^i_t\tilde{\boldsymbol \lambda} ^{i,s,x}_t \right ]= \E_s\left [ \lambda^i_t\right ]\E \left [ \tilde{\boldsymbol \lambda} ^{i,s,x}_t\right ]$. By solving the expectation value of the SDE \ref{eq:SDEtilde} with the initial condition $\E \left [ \tilde{\boldsymbol \lambda} ^{i,s,x}_s\right ]=\boldsymbol{A}_{.i}x$ we get 
		\begin{align*}
		\E \left [ \tilde{\boldsymbol \lambda} ^{i,s,x}_t\right ]&=e^{-V (t-s)}  \boldsymbol{A}_{.i}x,\\
		\end{align*}
		which yields after being plugged in the last inequality 
		\begin{align*}
		I_T \leq& K \int_{{\R_+}} \int_0^{v_mT} \int_0^t x^2  \left [ V^{-1} \left (I_d-e^{-V({v_n T}-t)} \right )e^{-V (t-s)}\boldsymbol{A}_{.i}x \right ]^j \E \left [\lambda^i_s \E_s\left [ \lambda^i_t\right ] \right ] \d s \d t \nu^i(\d x),\\
		\leq& K \int_{{\R_+}} \int_0^{v_mT} \int_0^t x^3 \left [ V^{-1} \left (I_d-e^{-V({v_n T}-t)} \right )e^{-V (t-s)}\boldsymbol{A}_{.i} \right ]^j \E \left [\lambda^i_s  \lambda^i_t \right ] \d s \d t \nu^i(\d x).\\
		\end{align*}
		By combining Cauchy-Schwarz's inequality and Lemma \ref{lemma:second} we have that 
		$$\E \left [\lambda^i_s  \lambda^i_t \right ] \leq \E \left [|\lambda^i_s|^2\right ]^{1/2}\E \left [|\lambda^i_t|^2 \right ]^{1/2}\leq K,$$
		thus 
		\begin{align*}
		I_T \leq& K \int_{\R_+} x^3 \nu^i(\d x)\int_0^{v_mT}  \left [ V^{-1} \left (I_d-e^{-V({v_n T}-t)} \right )
		\int_0^t e^{-V (t-s)}\d s\boldsymbol{A}_{.i} \right ]^j   \d t,\\
		\leq& K \int_{\R_+} x^3 \nu^i(\d x)\int_0^{v_mT}  \left [V^{-1} \left (I_d-e^{-V({v_n T}-t)} \right )V^{-1}\left (I_d-e^{-Vt} \right )\boldsymbol{A}_{.i} \right ]^j   \d t,\\
		\leq& K \int_0^{v_mT}  \left [V^{-2} \left (I_d-e^{-V({v_n T}-t)} \right )\left (I_d-e^{-Vt} \right )\boldsymbol{A}_{.i} \right ]^j   \d t,\quad \text{because $V$ commutes with its exponential,}\\
		=& K \int_0^{v_mT}  \left [V^{-2} \left (I_d-e^{-V({v_n T}-t)} -e^{-Vt}+ e^{-V{v_n T}}\right )\boldsymbol{A}_{.i} \right ]^j   \d t,\\
		\end{align*}
		and since $ v_mT \leq v_nT$,
		\begin{align*}
		I_T \leq& K \int_0^{v_nT}  \left \|I_d\right \|_{op}+ \left \| e^{-V({v_n T}-t)} \right \|_{op}+ \left \| e^{-Vt} \right \|_{op}+ \left \| e^{-VT} \right \|_{op}\d t,\\
		\leq& K \int_0^{v_nT} 1+ (1+({v_n T}-t)^{d-1})e^{-(\beta-\rho_d)({v_n T}-t)}+(1+t^{d-1})e^{-(\beta-\rho_d)t} +(1+{v_n T}^{d-1})e^{-(\beta-\rho_d){v_n T}}  \d t,\\
		\leq & KT.
		\end{align*} 
	\end{proof}
	\begin{lemma}
		\label{lemma:Y}
		Set $\boldsymbol{ F}_T=\frac{\boldsymbol{ L}_T -\diag(m^1,\cdots,m^d)\int_0^T \boldsymbol \lambda_t \d t}{\sqrt T}$ and $\boldsymbol{ Y}_T=\frac{\boldsymbol{ L}_T -\diag(m^1,\cdots,m^d)\int_0^T\E \left [ \boldsymbol \lambda_t \right ] \d t}{\sqrt T}$. Then we have the equality 
		\begin{equation}
		\label{eq:decomposition}
		\boldsymbol{ Y}_T=J\boldsymbol{ F}_T+\boldsymbol{ R}_T,
		\end{equation}
		where 
		$$J= \left ( I_d-\diag(m^1,\cdots,m^d)B^{-1}A \right )^{-1}$$
		and
		$$\boldsymbol{ R}_T=\diag (m^1,\cdots,m^d) \left ( B-A \diag (m^1,\cdots,m^d)\right )^{-1} \frac{\E[\boldsymbol \lambda _T]-\boldsymbol \lambda _T}{\sqrt T}.$$
	\end{lemma}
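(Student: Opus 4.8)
The plan is to compute the difference $\boldsymbol{Y}_T-\boldsymbol{F}_T$ explicitly and then recognise it as $(J-I_d)\boldsymbol{F}_T+\boldsymbol{R}_T$. Writing $D:=\diag(m^1,\cdots,m^d)$ for brevity, the two normalisations differ only in whether one subtracts $\int_0^T\boldsymbol{\lambda}_t\,\d t$ or $\int_0^T\E[\boldsymbol{\lambda}_t]\,\d t$, so one reads off immediately that $\boldsymbol{Y}_T-\boldsymbol{F}_T=\frac{D}{\sqrt T}\int_0^T(\boldsymbol{\lambda}_t-\E[\boldsymbol{\lambda}_t])\,\d t$. The task therefore reduces to putting the time-integral of the centred intensity into closed form.

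To do that I would reuse the linear SDE for the centred intensity that already appears in the proof of Lemma~\ref{A12}: with $V=B-AD$ one has $\d(\boldsymbol{\lambda}_t-\E[\boldsymbol{\lambda}_t])=-V(\boldsymbol{\lambda}_t-\E[\boldsymbol{\lambda}_t])\,\d t+A\,\d\boldsymbol{M}_t$ with zero initial condition. Instead of solving it by variation of parameters (as Lemma~\ref{A12} does), I would simply integrate both sides over $[0,T]$, using $\boldsymbol{M}_0=0$ and $\boldsymbol{\lambda}_0=\E[\boldsymbol{\lambda}_0]=\boldsymbol{\mu}$, to get $\boldsymbol{\lambda}_T-\E[\boldsymbol{\lambda}_T]=-V\int_0^T(\boldsymbol{\lambda}_t-\E[\boldsymbol{\lambda}_t])\,\d t+A\boldsymbol{M}_T$. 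Since Assumption~\ref{as:vanish} makes the eigenvalues of $V$ positive, $V$ is invertible and this rearranges to $\int_0^T(\boldsymbol{\lambda}_t-\E[\boldsymbol{\lambda}_t])\,\d t=V^{-1}A\boldsymbol{M}_T-V^{-1}(\boldsymbol{\lambda}_T-\E[\boldsymbol{\lambda}_T])$. Substituting this together with $\boldsymbol{M}_T=\sqrt T\,\boldsymbol{F}_T$ yields $\boldsymbol{Y}_T=(I_d+DV^{-1}A)\boldsymbol{F}_T+DV^{-1}\tfrac{\E[\boldsymbol{\lambda}_T]-\boldsymbol{\lambda}_T}{\sqrt T}$, whose last term is exactly $\boldsymbol{R}_T$ since $V^{-1}=(B-AD)^{-1}$.

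It then remains to identify the matrix coefficient, i.e. to prove $I_d+DV^{-1}A=J=(I_d-DB^{-1}A)^{-1}$, and this is the step that needs genuine care because $A$, $B$, $D$ do not commute. I would argue by a push-through (Woodbury-type) identity: factoring $V=B(I_d-B^{-1}AD)$ gives $V^{-1}A=(I_d-B^{-1}AD)^{-1}B^{-1}A$, while the elementary relation $(I_d-DB^{-1}A)D=D(I_d-B^{-1}AD)$ (both sides equal $D-DB^{-1}AD$) implies $D(I_d-B^{-1}AD)^{-1}=(I_d-DB^{-1}A)^{-1}D=JD$. Hence $DV^{-1}A=JDB^{-1}A$, so that $I_d+DV^{-1}A=I_d+JDB^{-1}A$, and using $J(I_d-DB^{-1}A)=I_d$ this collapses to exactly $J$.

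I expect the matrix identity of the last paragraph to be the main obstacle, precisely because of non-commutativity; everything else is direct integration of an SDE and linear-algebra bookkeeping. A minor point to keep honest is the justification that integrating the centred SDE termwise is legitimate (the $A\,\d\boldsymbol{M}_t$ term is a martingale increment, so the pathwise/Stieltjes integration underlying Lemma~\ref{A12} applies verbatim), after which the claimed decomposition \eqref{eq:decomposition} follows.
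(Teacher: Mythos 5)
Your proof is correct, and it takes a somewhat different route from the paper's. The paper does not pass through the centred process: it integrates the SDE \ref{SDE} for $\boldsymbol\lambda$ and the ODE \ref{ODE} for $\E[\boldsymbol\lambda]$ separately over $[0,T]$, substitutes $\int_0^T\boldsymbol\lambda_t\,\d t=\diag(m^1,\cdots,m^d)^{-1}\big(\boldsymbol L_T-\sqrt T\,\boldsymbol F_T\big)$ and the analogous expression involving $\boldsymbol Y_T$ into the two integrated equations, subtracts them (the $\boldsymbol L_T$ terms cancel), and then isolates $\boldsymbol Y_T$ by successive left-multiplications, the non-commutative algebra being carried out inline via the rewriting $\big(B-A\diag(m^1,\cdots,m^d)\big)^{-1}B\diag(m^1,\cdots,m^d)^{-1}=\big(\diag(m^1,\cdots,m^d)-\diag(m^1,\cdots,m^d)B^{-1}A\diag(m^1,\cdots,m^d)\big)^{-1}$. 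You instead reduce immediately to $\boldsymbol Y_T-\boldsymbol F_T=\frac{D}{\sqrt T}\int_0^T(\boldsymbol\lambda_t-\E[\boldsymbol\lambda_t])\,\d t$ with $D=\diag(m^1,\cdots,m^d)$, integrate (rather than solve) the centred SDE underlying Lemma \ref{A12}, and confine all the non-commutativity to the standalone identity $I_d+DV^{-1}A=J$, proved by the push-through relation $D(I_d-B^{-1}AD)^{-1}=(I_d-DB^{-1}A)^{-1}D$. The two computations are algebraically equivalent --- writing $B=V+AD$ shows that the paper's matrix manipulation is exactly your identity in disguise --- but yours is more modular: the probabilistic input (a single integration of a linear SDE with zero initial condition, which is indeed legitimate pathwise since all terms are of finite variation) is cleanly separated from the linear algebra, and it makes transparent that the only hypotheses used are the invertibility of $V$, $B$ and $D$. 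What the paper's version buys in exchange is that it never needs to introduce the centred SDE or the martingale $\boldsymbol M$ in this proof; both arguments are equally rigorous and rest on the same assumptions.
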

	\begin{proof}
		By taking the expected value of  SDE \ref{SDE} we have the system 
		\begin{equation*}
		\left\lbrace
		\begin{array}{l}
		\d \boldsymbol{ \lambda}_t=B (\boldsymbol \mu-\boldsymbol{ \lambda}_t)\d t+A\d \boldsymbol{ L}_t,\\\\
		\d\E\left [\boldsymbol{ \lambda}_t\right ]=B (\boldsymbol \mu-\E[ \boldsymbol{ \lambda}_t])\d t+A \diag(m^1,\cdots,m^d)\E[\boldsymbol{ \lambda}_t] \d t,
		\end{array}
		\right.
		\end{equation*}
		which yields after integrating with respect to time
		\begin{equation*}
		\left\lbrace
		\begin{array}{l}
		\boldsymbol{ \lambda}_T-\boldsymbol{\mu}=B \boldsymbol \mu T-B \int_0^T \boldsymbol{ \lambda}_t \d t+A\boldsymbol{ L}_T,\\\\
		\E\left [\boldsymbol{ \lambda}_T\right ]-\boldsymbol{\mu}=B \boldsymbol \mu T-\left (B -A \diag(m^1,\cdots,m^d)\right )\int_0^T\E[ \boldsymbol{ \lambda}_t]\d t.
		\end{array}
		\right.
		\end{equation*}
		In order to involve the quantities of interest $\boldsymbol{ F}_T$ and $\boldsymbol{ Y}_T$, we state the fact that $\diag(m^1,\cdots,m^d)$ is invertible (since $\nu^1,\cdots,\nu^d$ are suppoted by $\R_+^*$), hence 
		\begin{equation*}
		\left\lbrace
		\begin{array}{l}
		\boldsymbol{ \lambda}_T-\boldsymbol{\mu}=B \boldsymbol \mu T-B \diag(m^1,\cdots,m^d)^{-1}\diag(m^1,\cdots,m^d) \int_0^T \boldsymbol{ \lambda}_t \d t+A\d \boldsymbol{ L}_T,\\\\
		\E\left [\boldsymbol{ \lambda}_T\right ]-\boldsymbol{\mu}=B \boldsymbol \mu T-\left (B\diag(m^1,\cdots,m^d)^{-1} -A \right )\diag(m^1,\cdots,m^d)\int_0^T\E[ \boldsymbol{ \lambda}_t]\d t,
		\end{array}
		\right.
		\end{equation*}
		which yields by adding and subtracting $\boldsymbol L _T$
		\begin{equation*}
		\left\lbrace
		\begin{array}{l}
		\boldsymbol{ \lambda}_T-\boldsymbol{\mu}=B \boldsymbol \mu T+B \diag(m^1,\cdots,m^d)^{-1}\left ( \sqrt T \boldsymbol{ F}_T -\boldsymbol{ L}_T\right )+A\d \boldsymbol{ L}_T,\\\\
		\E\left [\boldsymbol{ \lambda}_T\right ]-\boldsymbol{\mu}=B \boldsymbol \mu T+\left (B\diag(m^1,\cdots,m^d)^{-1} -A \right )\left (\sqrt T \boldsymbol{ Y}_T - \boldsymbol{ L}_T \right ).
		\end{array}
		\right.
		\end{equation*}
		Subtracting the first equation from the second yields
		\begin{align*}
		\E\left [\boldsymbol{ \lambda}_T\right ]-\boldsymbol{ \lambda}_T &= \left ( B \diag(m^1,\cdots,m^d)^{-1} -A \right)\sqrt T \boldsymbol{ Y}_T-B \diag(m^1,\cdots,m^d)^{-1} \sqrt T \boldsymbol{ F}_T,\\
		&= \left ( B -A \diag(m^1,\cdots,m^d)\right )\diag(m^1,\cdots,m^d)^{-1}\sqrt T \boldsymbol{ Y}_T-B \diag(m^1,\cdots,m^d)^{-1} \sqrt T \boldsymbol{ F}_T.
		\end{align*}
		Since $\left ( B - \diag(m^1,\cdots,m^d)A\right )$ is invertible, we have
		\begin{align*}
		\left ( B -A \diag(m^1,\cdots,m^d)\right )^{-1} \frac{\E\left [\boldsymbol{ \lambda}_T\right ]-\boldsymbol{ \lambda}_T}{\sqrt T}=&\diag(m^1,\cdots,m^d)^{-1}\boldsymbol{ Y}_T\\&- \left ( B -A \diag(m^1,\cdots,m^d)\right )^{-1} B \diag(m^1,\cdots,m^d)^{-1} \boldsymbol{ F}_T,\\
		=&\diag(m^1,\cdots,m^d)^{-1}\boldsymbol{ Y}_T\\&- \left (  \diag(m^1,\cdots,m^d) -\diag(m^1,\cdots,m^d)B^{-1}A \diag(m^1,\cdots,m^d)\right )^{-1}  \boldsymbol{ F}_T.\\
		\end{align*}
		Multiplying to the left by $\diag(m^1,\cdots,m^d)$ we get
		$$\diag(m^1,\cdots,m^d)\left ( B -A \diag(m^1,\cdots,m^d)\right )^{-1} \frac{\E\left [\boldsymbol{ \lambda}_T\right ]-\boldsymbol{ \lambda}_T}{\sqrt T}=\boldsymbol{ Y}_T- \left (  I_d - \diag(m^1,\cdots,m^d)B^{-1}A\right )^{-1} \boldsymbol{ F}_T.$$
	\end{proof}
	
	\section*{Acknowledgement}
	
	I would like to thank Prof. Anthony R{\'e}veillac for his guidance and his useful critiques of this paper.

\end{document}